\documentclass[11pt,a4paper]{amsart}
\usepackage{amsmath}
\usepackage{amssymb}
\usepackage{mathrsfs}
\usepackage{amscd,mathabx}
\usepackage{comment}
\usepackage{enumitem}
\usepackage{url}
\usepackage[all]{xypic}

\usepackage{tikz}
\usetikzlibrary{patterns,intersections}
\usepackage{graphicx}
\usepackage{tikz-cd} 
\usepackage{float}
\usepackage{bm}
\usepackage{todonotes}

\usepackage[a4paper, top=3cm, bottom=3cm, left=2.5cm, right=2.5cm, marginpar=.75in]{geometry}

\newcommand{\sss}{\ifmmode{{\mathfrak s}}\else{${\mathfrak s}$\ }\fi}
\newcommand{\sst}{\ifmmode{{\mathfrak t}}\else{${\mathfrak t}$\ }\fi}
\newcommand{\ssc}{\ifmmode{{\mathfrak c}}\else{${\mathfrak c}$\ }\fi}

\def\Z{\mathbb Z}
\def\R{\mathbb R}
\def\Q{\mathbb Q}
\def\C{\mathbb C}
\def\F{\mathbb F}
\def\N{\mathbb N}

\def\T{\mathbb T}
\def\L{\mathbb L}

\def\ta{\boldsymbol{\alpha}}
\def\tb{\boldsymbol{\beta}}

\def\xx{\mathbf{x}}
\def\yy{\mathbf{y}}

\def\ww{\mathbf{w}}
\def\zz{\mathbf{z}}

\def\cS{\mathcal{S}}

\def\cM{\mathcal{M}}
\def\cP{\mathcal{P}}
\def\cD{\mathcal{D}}

\def\cL{\mathcal{L}}
\def\cC{\mathcal{C}}
\def\cT{\mathcal{T}}
\def\cF{\mathcal{F}}
\def\cH{\mathcal{H}}

\def\cG{\mathcal{G}}
\def\cN{\mathcal{N}}
\def\cQ{\mathcal{Q}}
\def\cE{\mathcal{E}}
\def\CP{\C P}

\def\wt#1{\widetilde{#1}}
\def\wh#1{\widehat{#1}}

\def\ul#1{\underline{#1}}
\def\um{\underline{m}}

\def\spinc{\ifmmode Spin^c\else Spin$^c$\fi}

\def\Bor{{\mathcal{B}_{0}}}

\DeclareMathOperator{\rk}{rk}
\DeclareMathOperator{\gr}{gr}
\DeclareMathOperator\lk{lk}

\DeclareMathOperator\Hom{Hom}
\DeclareMathOperator\coker{coker}

\DeclareMathOperator{\CF}{CF}
\DeclareMathOperator{\CFK}{CFK}

\DeclareMathOperator{\HF}{HF}
\DeclareMathOperator{\HFK}{HFK}

\DeclareMathOperator{\cfk}{CFK}

\DeclareMathOperator{\hf}{HF}

\DeclareMathOperator{\Sym}{Sym}

\DeclareMathOperator*{\hashsum}{\scalebox{1.5}{\#}}
\DeclareMathOperator{\Spin}{Spin}

\DeclareMathOperator{\full}{full}
\DeclareMathOperator{\id}{id}

\newcommand{\ve}[1]{\boldsymbol{\mathbf{#1}}}
\newcommand{\cCFL}{\mathcal{C\! F\! L}}
\newcommand{\cCFK}{\mathcal{C\hspace{-.5mm}F\hspace{-.3mm}K}}
\newcommand{\cHFL}{\mathcal{H\! F\! L}}
\newcommand{\cHFK}{\mathcal{H\! F\! K}}
\newcommand{\bL}{\mathbb{L}}
\newcommand{\g}{\gamma}
\newcommand{\ws}{\ve{w}}
\newcommand{\as}{\ve{\alpha}}
\newcommand{\bs}{\ve{\beta}}
\newcommand{\zs}{\ve{z}}
\newcommand{\xs}{\ve{x}}
\newcommand{\ys}{\ve{y}}
\newcommand{\uxs}{\underline{\xs}}
\newcommand{\uys}{\underline{\ys}}
\renewcommand{\a}{\alpha}
\renewcommand{\b}{\beta}
\newcommand{\bT}{\mathbb{T}}
\renewcommand{\d}{\partial}
\newcommand{\bB}{\mathbb{B}}

\newcommand{\ha}{\mathrm{h}_a}
\newcommand{\hb}{\mathrm{h}_b}
\newcommand{\hc}{\mathrm{h}_c}
\newcommand{\hd}{\mathrm{h}_d}

\newcommand{\mb}{g}
\newcommand{\mh}{\eta}
\newcommand{\md}{m}

\newcommand{\bo}{\widehat{\mathrm{b}}_1}
\newcommand{\bx}{\widehat{\mathrm{b}}_x}
\newcommand{\by}{\widehat{\mathrm{b}}_y}
\newcommand{\bxy}{\widehat{\mathrm{b}}_{xy}}

\numberwithin{equation}{section}

\theoremstyle{plain}
\newtheorem{theorem}[equation]{Theorem}
\newtheorem*{theorem*}{Theorem}
\newtheorem{lemma}[equation]{Lemma}
\newtheorem{proposition}[equation]{Proposition}

\newtheorem{corollary}[equation]{Corollary}

\theoremstyle{definition}
\newtheorem{example}[equation]{Example}

\newtheorem{definition}[equation]{Definition}

\theoremstyle{remark}
\newtheorem{remark}[equation]{Remark}

\numberwithin{equation}{section}

\newcommand{\Ss}[1]{\scriptstyle{#1}}

\def\mpar#1{\marginpar{\colorbox{yellow!10}{\begin{minipage}{2cm}\fontsize{6}{7.2}\selectfont\color{green!20!black}
#1\end{minipage}}}}

\usepackage{ocgx}
\newcounter{proofcount}
\renewenvironment{proof}[1][Proof. ]{%
  \stepcounter{proofcount}\textit{#1}%
  \begin{ocg}{how does this works?}{\arabic{proofcount}}{1}}{%
\end{ocg}\switchocg{\arabic{proofcount}}{\textcolor{red!50!black}{$\blacksquare$}}}%

\newcommand{\frs}{\mathfrak{s}}
\newcommand{\frt}{\mathfrak{t}}
\DeclareMathOperator{\im}{im}
\DeclareMathOperator{\Tors}{Tors}
\newcommand{\bH}{\mathbb{H}}
\newcommand{\scU}{\mathscr{U}}
\newcommand{\scV}{\mathscr{V}}
\newcommand{\bF}{\mathbb{F}}
\newtheorem*{ack}{Acknowledgements}
\newcommand{\scR}{\mathscr{R}^-}
\newcommand{\scRi}{\mathscr{R}^{\infty}}
\newcommand{\scA}{\mathscr{A}}
\DeclareMathOperator\Mor{Mor}
\renewcommand{\top}{\mathrm{top}}
\renewcommand{\bot}{\mathrm{bot}}

\title[Curves in the projective plane]{Heegaard Floer homology and plane curves with non-cuspidal singularities}

\author{Maciej Borodzik}
\address{Institute of Mathematics, University of Warsaw, ul. Banacha 2,
02-097 Warsaw, Poland}
\email{mcboro@mimuw.edu.pl}

\author{Beibei Liu}
\address{School of Mathematics, Georgia Institute of Technology, Atlanta, GA, USA 30332}
\email{bliu96@gatech.edu}

\author{Ian Zemke}
\address{Department of Mathematics\\Princeton University\\  Princeton, NJ, USA}
\email{izemke@math.princeton.edu}
\makeatletter
\@namedef{subjclassname@2010}{\textup{2010} Mathematics Subject Classification}
\makeatother
\subjclass[2020]{primary: 14H50, secondary: 57K18, 14B05, 57R58} 
\keywords{algebraic curves, link Floer homology, knotifications of links, rational cuspidal curve}

\begin{document}
\begin{abstract}
  We study possible configurations of singular points occuring on general algebraic curves in $\mathbb{C}P^2$ via Floer theory. 
To achieve this, we describe a general formula for the $H_{1}$-action on the knot Floer complex of the knotification of a link in $S^3$,  in terms of natural actions on the link Floer complex of the original link. This result  may be of interest on its own.
\end{abstract}

\maketitle
\section{Introduction}

\subsection{General context}

Let $C$ be a complex curve in $\C P^2$. The curve $C$ is called \emph{rational}, if $C$ is irreducible and there exists a continuous degree one map from $S^2$
to $C$. The curve $C$ is called \emph{cuspidal}, if all its singularities have one branch (i.e. their links have one component). The problem of classifying rational cuspidal curves was posed in the 19th century and has not been fully resolved ever since, despite significant recent progress. For instance, Palka and Pe\l{}ka gave a partial classification (in fact, a full classification under a so-called rigidity conjecture)
\cite{Pal_Pel1,Pal_Pel2}, and Koras and Palka \cite{Palka_Cool1,Palka_Cool2} solved a century old conjecture of Coolidge--Nagata \cite{Coolidge,Nagata} related to cuspidal curves.

In \cite{FLMN}, Fernandez de Bobadilla, Luengo, Melle-Hernandez and N\'emethi indicated a connection between Seiberg--Witten invariants and rational cuspidal curves. As a consequence of these connections, they stated a conjecture binding coefficients
of Alexander polynomials of singular points of a rational cuspidal curve. This conjecture was proved in \cite{BLmain}; we refer to \cite{BodnarNemethi} for a precise connection of the results of \cite{BLmain} and the conjecture of \cite{FLMN}. The methods of \cite{BLmain} used the relation
of semigroups of singular points with $V_s$-invariants of knots together with the Ozsv\'ath--Szab\'o $d$-invariant inequality.

The methods of \cite{BLmain} were later generalized by Bodn\'ar, Borodzik, Celoria, Golla, Hedden and Livingston \cite{BCG,BHL} to the case
of non-rational cuspidal curves. Further generalization required better understanding of Heegaard Floer theory of multi-component links of
singularities. The techniques used in the present paper build on recent developments in the Heegaard Floer TQFT due to the third author as well as many others; see \cite{HMZConnectedSum,JCob,ZemGraphTQFT,ZemQuasi,ZemCFLTQFT,ZemAbsoluteGradings}.

\subsection{Overview of the paper}

Given a $k$-component link $L\subset S^3$ we 
use the aforementioned Heegaard Floer TQFT to recover the knot Floer complex of the knotification $\widehat{L}$ of $L$ together with the action of $\Lambda^* H_{1}(\#^{k-1} S^2\times S^1)$ on it. This is a generalization of the result of Ozsv\'ath and Szab\'o, who established
an isomorphism between the hat version of link Floer homology of the link and of its knotification \cite{OSLinks}. We prove a particularly useful technical result (Proposition~\ref{prop:knotif_Floer}) which allows us to compute the action of $\Lambda^*H_{1}(\#^{k-1}S^2\times S^1)$ on the knot Floer homology of a knotification in terms of a link diagram for $L$.

Using this general result, we compute the knot Floer complexes of the knotifications of the 
$T(2,2n)$ torus link and of its mirror, as well as the action of $H_1(S^2\times S^1)$. In particular, we are able to compute
the invariants $V^{\bot}_s$ and $V^{\top}_s$ of these knots. To the best of our knowledge, these computations have not appeared in the literature before.

Our main focus is on general curves in $\C P^2$. To generalize results of \cite{BCG,BHL} to the setting of complex curves $C\subset\C P^2$ with non-cuspidal singularities, we take
a precisely defined `tubular' neighborhood $N$ of $C$. The boundary $Y=\partial N$ can be described as a surgery on a link $L$
in $\#^\rho S^2\times S^1$, where $L$ is a suitable connected sum of knotifications of links of singularities and Borromean knots, and $\rho$ can be expressed
in terms of topology of $C$. As in \cite{BCG,BHL}, the manifold $Y$ bounds a four-manifold $X=\C P^2\setminus N$, with
trivial intersection form. Using Ozsv\'ath--Szab\'o's $d$-invariant inequality in the version proved by Levine and Ruberman \cite{LevineRuberman},
we obtain restrictions on $V^{\top}_s(L)$ and $V^{\bot}_s(L)$.

While we are able to provide statements even for reducible curves $C$, in the concrete applications we focus on irreducible curves. The first
case we deal with is when $C$ has some finite number of cuspidal singularities as well as singularities whose links are $T(2,2n)$ torus links.
We obtain the following result.
\begin{theorem*}[see Theorem~\ref{thm:genus_and_double}]
  Let $C$ be a reduced curve of degree $d$ and genus $g$. 
 Suppose that $C$ has cuspidal singular points $z_1,\dots,z_\nu$, whose semigroup counting functions are $R_1,\dots,R_\nu$, respectively.
 Assume that apart from these $\nu$ points, the curve $C$ has, for each $n\ge 1$, $m_n\ge 0$ singular points whose links are $T(2,2n)$  and no other singularities.
Define 
\[
\eta_+=\sum_{n=1}^\infty m_n\quad \text{and} \quad \kappa_+=\sum_{n=1}^\infty n m_n.
\]  
  For any $k=1,\dots,d-2$, we have:
  \begin{align*}
      \max_{0\le j\le g} \min_{0\le i\le \kappa_+-\eta_+} \left(R(kd+1-\eta_+-2i-2j)+i+j\right)&\le \frac{(k+1)(k+2)}{2}+g\\
      \min_{0\le j\le g+\kappa_+} \left(R(kd+1-2\gamma)+j\right) &\ge \frac{(k+1)(k+2)}{2}.
    \end{align*}
  Here $R$ denotes the infimal convolution of the functions $R_1,\dots,R_{\nu}$.
\end{theorem*}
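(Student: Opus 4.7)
The plan is to imitate the strategy of \cite{BCG,BHL} but using the technology developed in the present paper to handle the non-cuspidal singularities. First, we form the precisely defined `tubular' neighborhood $N$ of $C$ described in the paper and set $X=\CP^2\setminus N$, a smooth four-manifold with trivial intersection form whose boundary $Y=\partial N$ is realized as an integral surgery on a link $L\subset \#^{\rho} S^2\times S^1$. Here $L$ decomposes, under the connected-sum operation, into knotifications of the links of the cuspidal points $z_i$, knotifications of the $T(2,2n)$ torus links (with multiplicity $m_n$), and $g$ Borromean summands that account for the geometric genus; the integer $\rho$ is determined by $g$ together with $\eta_+$ and the singular structure of $C$.

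Next, we invoke the Levine--Ruberman version of the Ozsv\'ath--Szab\'o $d$-invariant inequality applied to $X$, using the characteristic vectors of $\CP^2$ corresponding to $k=1,\dots,d-2$. The two sides of this inequality give, respectively, an upper bound on $V^{\bot}_s(L)$ and a lower bound on $V^{\top}_s(L)$ at the relevant $\mathrm{spin}^c$-structure, with $s$ equal to the appropriate surgery coefficient; the constant $\tfrac{(k+1)(k+2)}{2}$ on the right-hand sides of both displayed inequalities is exactly the corresponding $d$-invariant of $\CP^2$, and the arithmetic progression $kd+1$ arises from the chosen framing.

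The remaining step is to translate these bounds into inequalities involving $R$. For each cuspidal singular point the $V_s$ of the knotification (which is the algebraic knot itself) is encoded by $R_i$, and under connected sum these combine by infimal convolution to give $R$. For each $T(2,2n)$ singularity we invoke Proposition~\ref{prop:knotif_Floer} together with the explicit knot Floer complex of the knotification of $T(2,2n)$ (computed earlier in the paper) to read off both $V^{\top}_s$ and $V^{\bot}_s$; the difference between them is controlled by an integer $i_n\in\{0,\dots,n-1\}$, so that summing over the non-cuspidal points produces the inner minimum over $0\le i\le \kappa_+-\eta_+$ in the first inequality. The $\Lambda^{*}H_1$-action coming from the Borromean summands (and, in the second inequality, also from the $T(2,2n)$-knotifications) contributes the outer shift $j$, the ranges $[0,g]$ and $[0,g+\kappa_+]$ reflecting the asymmetry between the top and bottom $U$-towers.

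The main obstacle is this last step, namely tracking how the $H_1$-action on the knot Floer complex of the knotification of a multi-component link behaves under connected sum, and extracting from it the joint $(V^{\top}_s, V^{\bot}_s)$ data that produces the specific max-min structure of the two inequalities. This is exactly what Proposition~\ref{prop:knotif_Floer} together with the $T(2,2n)$-computation is designed to enable; once the translation is in place, the two displayed inequalities follow by straightforward rearrangement of the resulting bounds on $V^{\top}_s(L)$ and $V^{\bot}_s(L)$.
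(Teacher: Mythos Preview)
Your proposal is correct and follows essentially the same route as the paper's proof: build the tubular neighborhood, realize $Y$ as $d^2$-surgery on the knot $\widehat{K}=K\#\big(\hashsum_n m_n H_n\big)\#^g\Bor$ in $\#^{2g+\eta_+}S^2\times S^1$, apply the Levine--Ruberman inequality (Proposition~\ref{prop:main_estimate}) together with the large surgery formula to bound $V^{\top}_{md}(\widehat{K})$ from below and $V^{\bot}_{md}(\widehat{K})$ from above, and then unpack these $V$-invariants via the split-towers computation (Propositions~\ref{prop:summary-T(2,2n)} and~\ref{prop:split_towers}, packaged as Proposition~\ref{prop:gather_all_we_need}) to obtain the displayed inequalities after the substitution $m=k-\tfrac{d-3}{2}$. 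One small slip: the cuspidal singular points $z_i$ have \emph{knot} links, so there is no knotification to perform there---the $K_i$ enter directly as algebraic knots whose staircase complexes contribute $R_i$, and only the $T(2,2n)$ pieces require the knotification machinery.
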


We also study curves with negative double points, and more generally curves with singularities whose links are $-T(2,2n)$ torus links. While a complex curve cannot have such singular points, we give a precise definition of a smooth curve with more general singularities, see Section \ref{sub:categories}. There is another technical issue: the $V_s$ invariants of tensor products of positive staircase complexes (like the knot Floer complex of an L-space knot) can be easily computed. Tensoring
a negative staircase complex (like the knot Floer complex of the mirror of an L-space knot) and several positive staircase complexes 
yields a chain complex whose
$V_s$ invariants cannot, in general, be expressed in terms of the $V_s$ invariants of the initial complexes, not unless there is precisely one
positive staircase complex involved. An extra assumption in the next theorem reflects this problem.
\begin{theorem*}[Theorem~\ref{thm:double_neg}]
  Suppose $C$ is a genus $g$ degree $d$
  singular curve in the smooth category as in Definition~\ref{def:singular_curve} with a cuspidal singular point $z$,
  $\um_n$ singularities whose links are $-T(2,2n)$, and no other singular points. Suppose further that the
  genus of $C$ is given by \eqref{eq:genus_g}.

 Then, for any $k=1,\dots,d-2$, we have
 \begin{align*}
\max_{0\le j\le g+\kappa_-} \left(R(kd+1-2j)+j\right)&\le \frac{(k+1)(k+2)}{2}+g+k_{-},\\
\min_{0\le i\le g}\max_{0\le j\le\kappa_--\eta_-} \left(R(kd+1-2i-2j-\eta_-)+i+j\right)&\ge \frac{(k+1)(k+2)}{2}+k_{-}-\eta_{-},
\end{align*}
where $R$ is the semigroup counting function for the singular point $z$, and $\eta_-=\sum \um_n, \kappa_-=\sum \um_n n$.
\end{theorem*}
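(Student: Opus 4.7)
The plan is to follow the same blueprint used for Theorem~\ref{thm:genus_and_double}, now with a single positive staircase (from the cuspidal singularity $z$) and several negative staircase-type factors (one per $-T(2,2n)$ link). First I would set up the topology as in Section~\ref{sub:categories}: take the `tubular' neighborhood $N$ of $C$ and identify $Y=\partial N$ with a large surgery on a link $L\subset \#^\rho S^2\times S^1$ obtained as a connected sum of the knotification of the link of $z$, the knotifications of the $\eta_-$ negative links $-T(2,2n)$, and $g$ Borromean knots, with $\rho$ determined by $g$ and the data $\um_n$. The complement $X=\C P^2\setminus N$ has trivial intersection form, so the Levine--Ruberman refinement of the Ozsv\'ath--Szab\'o $d$-invariant inequality applies, producing upper and lower bounds on $V^{\top}_s$ and $V^{\bot}_s$ of the knotification of $L$ in terms of $d$, $k$, and the correction terms coming from $g$ and the negative $-T(2,2n)$ pieces.

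Next I would assemble the knot Floer complex of the knotification $\widehat{L}$. Using Proposition~\ref{prop:knotif_Floer} together with the explicit computation of the knot Floer complex and the $H_{1}$-action for the knotification of $T(2,2n)$ and its mirror carried out earlier in the paper, $\widehat{L}$ has a chain complex that is the tensor product of a positive staircase (encoded by the semigroup counting function $R$ of the cuspidal point $z$), $\um_n$ copies of a negative-staircase-type complex for each $n$, and $g$ Borromean factors. The parameters $\eta_-=\sum \um_n$ and $\kappa_-=\sum n \um_n$ appearing in the theorem are, respectively, the total number and weighted count of these negative factors.

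The main obstacle is the computation of $V^{\top}_s$ and $V^{\bot}_s$ of this mixed-sign tensor product. As flagged in the paragraph before the theorem, these invariants of tensor products of several positive and negative staircases are generally not expressible in terms of the factor-wise $V_s$; the crucial feature that saves the argument here is that \emph{exactly one} positive staircase is present, so that the relevant minimisation in one direction and maximisation in the other reduce to a single cuspidal parameter combined with one auxiliary parameter tracking how generators are distributed across the negative factors. I expect $V^{\bot}_s(\widehat{L})$ to be realised by a single maximisation, producing $\max_{0\le j\le g+\kappa_-}\bigl(R(kd+1-2j)+j\bigr)$; and $V^{\top}_s(\widehat{L})$ to require first minimising over the positive-staircase index $i$ and then maximising over the distribution $j$ of generators among the $\eta_-$ negative factors, producing the nested $\min$--$\max$ expression with the mandatory offset $-\eta_-$ reflecting one forced contribution per negative factor. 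The Borromean factors contribute only an extra free parameter extending the ranges by $g$.

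Combining these explicit formulas for $V^{\top}_s$ and $V^{\bot}_s$ with the two $d$-invariant inequalities from Levine--Ruberman, and converting the Alexander grading parameter to the combination $kd+1-\cdots$ exactly as in the proof of Theorem~\ref{thm:genus_and_double}, yields the two displayed bounds. The bookkeeping of the constant terms $\tfrac{(k+1)(k+2)}{2}+g+\kappa_-$ and $\tfrac{(k+1)(k+2)}{2}+\kappa_--\eta_-$ on the right-hand sides is a direct translation of the $d$-invariants of $Y$ into the grading conventions of Section~\ref{sub:categories}, exactly parallel to the cuspidal-only case, with the negative factors contributing their own standard shift.
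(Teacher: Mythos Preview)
Your proposal is correct and follows essentially the same approach as the paper: set up $Y=\partial N$ as $d^2$-surgery on $\widehat{K}\subset \#^{2g+\eta_-}S^2\times S^1$, apply the Levine--Ruberman bounds to get the analogue of~\eqref{eq:we_resume_from_here} with $\eta_-$ in place of $\eta_+$, and then invoke parts (c) and (d) of Proposition~\ref{prop:gather_all_we_need} (whose hypotheses are met precisely because there is a single positive staircase) to convert $V^{\top}_{md}(\widehat{K})$ and $V^{\bot}_{md}(\widehat{K})$ into the stated $\min$--$\max$ and $\max$ expressions in $R$. One small correction of terminology: the outer $\min_{0\le i\le g}$ in the $V^{\top}$ formula is not a ``positive-staircase index'' but comes from the $g$ Borromean factors via Proposition~\ref{prop:split_towers}; the single positive staircase enters only through $V_s(K)=R(\cdots)$.
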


It is possible (and actually evident from their proofs)
to mix results of Theorem~\ref{thm:genus_and_double} and~\ref{thm:double_neg}, by considering curves with both positive
and negative $T(2,2n)$ torus link singularities. The calculations are only a bit harder, but we were not able to find applications that would justify
stating this more general result.

\subsection{Applications and perspectives}
Apart from an interest of its own of Theorems~\ref{thm:genus_and_double} and~\ref{thm:double_neg}, we provide some specific applications.
First of all, there is a growing interest in the question of ``trading genus for double points". To be more precise, given a surface of genus $g$, one can ask whether it is possible to deform it to a genus $g-1$ surface with an extra positive or negative double point. In the context of the surfaces in a four-ball with
fixed boundaries, this question is related to studying the difference between the clasp number and the smooth four-ball genus;
see \cite{DaemiScadutoChern,  FellerPark, JuhaszZemkeClasp,KH-Instantons-concordance,OwensStrle}. We deal with a variation of this question, which concerns trading
genus of a closed surface in $\C P^2$ for double points, while preserving the remaining singularities. Given that examples of higher genus (and higher degree) curves in $\C P^2$
do not appear often in the literature, it is much harder to provide concrete cases, where our obstructions can be applied.
We provide the following two concrete examples.

\begin{example}[see Example~\ref{ex:orevkov_neg}] \label{ex:1.1}
  Let $C_n$ be a genus one Orevkov curve of Example~\ref{ex:orevkov}. The genus cannot be traded for a negative double point.
  It is unknown if the genus can be traded for a positive double point (methods of the present paper are insufficient to obstruct this).
\end{example}

\begin{example}[see Example~\ref{ex:fg_cases}] \label{ex:1.2}
  If there exist genus one curves of degree 27 (resp. 33) with a single singularity whose link is a $T(10,73)$ torus knot (resp. $T(12,91)$ torus
  knot) (items (f) and (g) in \cite[Theorem~9.1]{BHL}), then the genus cannot be traded for a positive double point.
\end{example}
We note that the potential curves of Example~\ref{ex:1.2} pass the obstruction of \cite{BHL}, but at present we do not have a way to construct them explicitly, or to obstruct their existence by other means.

\smallskip
Our last application concerns complex algebraic curves all of whose singular points are of type $A_{m}$, that is, singular points whose links are
$T(2,m+1)$ torus knots/links. Such curves are extensively studied in the literature; see for instance the survey \cite{Libgober}.
We give a bound for the maximal $n$ such that an algebraic curve of the degree $d$ has an $A_{2n}$ singularity, as well as a bound on the
number of cusps of an algebraic curve of degree $d$ with nodes and ordinary cusps, see Section \ref{sec:an}.  While these bounds are not the best found in the literature, they are the first bounds using Floer theories. There is a chance that these bounds could be refined by using other versions of Floer homology.

As a perspective and a possibility for future research we indicate that the methods can be used to study line arrangements in $\C P^2$. The only
missing ingredient is the computation of Heegaard Floer chain complex of a $T(d,d)$ torus link for $d>2$, and the $H_1$ action on the knotification. 

\subsection{Organization}
In Subsection~\ref{sub:categories} we recall the notion of a smooth surface with singularities. As we never use any complex or even symplectic structures on the surfaces, this notion seems to be the most suitable for our applications. A reader interested in algebraic geometry can safely
consider all curves as complex ones; the only exception will be Theorem~\ref{thm:double_neg}.

Section~\ref{sec:review_heegaard} reviews Heegaard Floer theory. After recalling variuous known definitions and results, we show how to
obtain the knot Floer chain complex of the knotification of links, as well  the $H_{1}/\Tors$ action. Later, we recall the Levine--Ruberman versions of $d$-invariants and recall
definitions of $V_s$ invariants.

Section~\ref{sec:neighborhood} constructs a tubular neighborhood $N$ of a singular curve and presents the boundary $Y$ of this neighborhood as a surgery on a link $L$ in $\#^\rho S^2\times S^1$ for a concrete value of $\rho$.  We then compute homological invariants of $Y$, $N$ and $\C P^2\setminus N$.  In particular,
we study which \spinc{} structures on $Y$ extend over $\C P^2\setminus N$. These computations are slight generalizations of calculations
of \cite{BCG,BHL,BLmain}.

In Section~\ref{sec:block} we introduce  several links which play  prominent roles in our paper, and are used to build the links from Section~\ref{sec:neighborhood}. For our
applications, these are algebraic knots, or, more generally, L-space knots; see Subsection~\ref{sub:lspace}, as well as 
knotifications of $T(2,2n)$ torus links and their mirrors. The detailed
construction for the Hopf link is presented in Subsection~\ref{sub:hopf_link}, and  the generalization to knotifications of arbitrary $T(2,2n)$ torus link is given in Subsection~\ref{sub:torus_link}. We conclude Section~\ref{sec:block}
with Subsection~\ref{sub:borro}, where we recall the computations of the Heegaard Floer chain complex of the Borromean knot $\Bor$.

Section~\ref{sec:tensor} contains some important computations that happen behind a scene. We show how to recover the $V_s$ invariant
of a product of positive and negative staircases. A precise statement is given in Proposition~\ref{prop:main-computation-V-s}.
We show that the assumptions in the second item of that proposition is necessary in Subsection~\ref{sub:counter}.

Next, we consider tensor products of knots in manifolds with $b_1>0$. It turns out that most of the 
knots that we encounter share a property, which
greatly facilitates our computations, namely they have \emph{split towers}, see Definition~\ref{def:knots_split_towers}.

Section~\ref{sec:non_rational} contains the  proofs of Theorems~\ref{thm:genus_and_double} and~\ref{thm:double_neg}. The main technical result
is Proposition~\ref{prop:gather_all_we_need}, which computes the $d$-invariants of $Y$ in terms of the semigroup counting functions
of knots of cuspidal singularities. We also compare Theorems~\ref{thm:genus_and_double} and~\ref{thm:double_neg} with bounds
for cuspidal curves of higher genus in Subsections~\ref{sub:edge_of_rational_and_absurd} and~\ref{sub:another_edge}.

In Section~\ref{sec:an} we consider curves whose only singularities are $T(2,k)$ torus knots or links. The main results are Theorem~\ref{prop:number_of_cusps}, and Theorem~\ref{prop:max_An_fixed_degree}. Theorem~\ref{prop:number_of_cusps} concerns curves which have only double points and ordinary cusps. Theorem~\ref{prop:max_An_fixed_degree} bounds the maximal index $n$ such that a curve of degree $d$ can have an $A_{2n}$ singularity. The main algebraic tool is Proposition~\ref{thm:Rm-bound}, which allows us to turn the obstructions from the previous sections into a closed formula.

%

\subsection{Singular curves in the smooth category}\label{sub:categories}

The methods we use in the present article work in a smooth category. A term ``smooth surface with singularities" might be misleading,
therefore we make precise our terminology.
\begin{definition}\label{def:singular_curve}
  A \emph{singular curve in the smooth category} in $\CP^2$ is a smooth map $\iota\colon\Sigma\to\CP^2$, which is a diffeomorphism onto its image except for finitely many points.
  The surface $\Sigma$ is a smooth, closed, two-dimensional manifold.
  The image $C=\iota(\Sigma)$ is a smooth submanifold except possibly at points $z_1,\dots,z_N$. For each such point $z_i$, there exists a ball $B_i\subset\CP^2$ with center $z_i$, such that $C$ is transverse to $\d B_i$ and $C\cap B_i$ is a cone over $K_i=\partial B_i\cap C$. 

  The \emph{genus} of a singular curve is the genus of $\Sigma$. The set $K_i$ is a link in $\partial B_i$. It is called a \emph{link of singularity}.
\end{definition}


Algebraic curves have one more restriction. The link of a singularity of an algebraic curve is an algebraic link (see \cite{EisenbudNeumann}).
For example, every algebraic knot is a sufficiently positive iterated torus knot.

\begin{definition}
  A singular curve in the smooth category is of \emph{algebraic type} if all links of its singularities are algebraic links.
  A singular curve in the smooth category is of \emph{weakly algebraic type} if all links of its singularities are either algebraic links
  or their mirrors.
\end{definition}

In the present paper, unless specified otherwise, one can always replace a term ``algebraic curve" by a ``singular curve in the smooth category of weakly algebraic type". We refer to \cite{GollaStarkson} for a related construction.

\begin{ack}
  The project was partially motivated by the talk of Peter Kronheimer on a Regensburg online seminar in May 2020. The authors would
  like to thank Peter for his talk and to Jonathan Bowden, Lukas Lewark and Raphael Zentner for organizing the seminar during the pandemic.
  The authors would like to thank Dmitry Kerner and Eugenii Shustin for discussion. They are also grateful to Alberto Cavallo
  for spotting a mistake in the first version of the paper.

  The first named author was supported by OPUS 2019/B/35/ST1/01120 grant
  of the Polish National Science. The second named author is grateful to the Max Planck Institute for Mathematics in Bonn for its hospitality and financial support, where the  project began. 
\end{ack}

\section{Review of Heegaard Floer theory}\label{sec:review_heegaard}

\subsection{Heegaard Floer complexes with multiple basepoints}\label{sub:hf_u}

\begin{definition}\label{def:heegaard_diagram}
A \emph{multi-pointed Heegaard diagram} for a 3-manifold $Y$
is a quadruple $(\Sigma,\ta,\tb,\ww)$ where:
\begin{itemize}
  \item $\Sigma$ is a genus $g$ surface, which splits $Y$ into two genus $g$ handlebodies, $U_{\a}$ and $U_{\b}$, and $\ww=(w_{1}, \cdots, w_{n})$ is a nonempty set of basepoints in $\Sigma$.
  \item $\ta=(\alpha_1,\dots,\alpha_{g+n-1})$ and $\tb=(\beta_1,\dots,\beta_{g+n-1})$ 
are collections of simple closed curves on $\Sigma$, where $n=|\ww|$. Each curve in $\ta$ bounds a compressing disk in $U_{\a}$, and each curve in $\tb$ bounds a compressing disk in $U_{\b}$. Furthermore, the curves in $\ta$ are pairwise disjoint, and similarly for $\tb$.
\item The curves $\ta$ and $\tb$ are transverse.
\item The curves in $\ta$ are  linearly independent in $H_1(\Sigma\setminus \ws)$, and similarly for $\tb$.
\end{itemize}
\end{definition}

Ozsv\'{a}th and Szab\'{o} \cite[Section~2.6]{OSDisks} describe a map
\[
\mathfrak{s}_{\ww}\colon \bT_{\a}\cap \bT_{\b}\to \Spin^c(Y).
\]

Given a Heegaard diagram of $Y$, we define  a Floer chain complex
$\CF^-(Y,\ww,\mathfrak{s})$ over $\F_2[U_1,\dots,U_n]$.
The chain complex is generated over $\F_2[U_1,\dots,U_n]$ by intersection points in $\T_{\a}\cap\T_{\b}$ satisfying $\mathfrak{s}_{\ws}(\xs)=\mathfrak{s}$. Here $\T_{\a},\T_{\b}\subset\Sym^{g+n-1}(\Sigma)$ are the two half-dimensional tori 
\[
\T_{\a}=\alpha_1\times\dots\times\alpha_{g+n-1},\quad \text{and} \quad  \T_{\b}=\beta_1\times\dots\times\beta_{g+n-1}.
\]

For any $\xx\in\T_{\a}\cap\T_{\b}$, the differential is defined by
\begin{equation}\label{eq:differential}
\partial \xx=\sum_{\yy\in\T_{\a}\cap\T_{\b}}\sum_{\substack{\phi\in\pi_2(\xx,\yy)\\\mu(\phi)=1}}\#(\cM(\phi)/\R) U_1^{n_{w_1}(\phi)}
\cdots U_n^{n_{w_n(\phi)}}\yy.
\end{equation}
Here, $\pi_2(\xx,\yy)$ denotes the set of homotopy classes of maps of a complex unit disk $\mathbb{D}$ to $\Sym^{g+n-1}(\Sigma)$ such that point $-i$ is mapped to $\xs$, the point $i$ is mapped to $\ys$, $\partial\mathbb{D}\cap \{\mathrm{Re}(z)<0\}$ is mapped to $\bT_{\b}$ and $\partial\mathbb{D}\cap \{\mathrm{Re}(z)>0\}$ is mapped to $\bT_{\a}$.  The 
space $\cM(\phi)$ is the moduli space of $J_s$-holomorphic disks representing $\phi$ (for some 1-parameter family of almost complex structures $J_s$ on $\Sym^{g+n-1}(\Sigma)$). The condition
that  $\mu(\phi)=1$ implies that $\cM(\phi)/\R$ is generically a finite set of points. The integers $n_{w_{i}}(\phi)$ are intersection
numbers of $\{w_{i}\}\times\Sym^{g+n-2}(\Sigma)\subset \Sym^{g+n-1}(\Sigma)$ with the image of $\phi$.

If $c_1(\mathfrak{s})$ is torsion, then $\CF^-(Y,\ww,\sss)$ admits an absolute $\Q$-valued grading, which we denote by $\gr_{w}$. The differential decreases the grading by $1$. Multiplication
by any of the $U_i$ decreases the grading by $-2$. Formally inverting the variables $U_1,\dots,U_n$ in $\CF^-(Y,\ww,\sss)$ gives a chain complex $\CF^\infty(Y,\ww,\sss)$ over $\F_2[U_1,U_1^{-1},\dots,U_n,U_n^{-1}]$.

\subsection{The link Floer complex}\label{sub:link_chain_complex}
For links in $S^{3}$, Ozsv\'{a}th and Szab\'{o} \cite{OSLinks} introduced the link Floer homology, which generalizes the knot Floer homology defined seperately by Rasmussen \cite{RasmussenKnots} and Ozsv\'{a}th--Szab\'{o} \cite{OSKnots}. We presently recall their construction.


\begin{definition}
An \emph{oriented multi-pointed link} $\L=(L, \ww, \zz)$ in $Y^{3}$ is an oriented link $L$ with two disjoint collections of basepoints $\ww=\{w_1, \dots, w_n\}$ and $\zz=\{z_1, \dots, z_n\}$, such that as one traverses $L$, the basepoints alternate between $\ww$ and $\zz$. Furthermore, each component of $L$ has a positive (necessarily even) number of basepoints, and each component of $Y$ contains at least one component of $L$.  
\end{definition}

Analogously to Definition~\ref{def:heegaard_diagram}, we have the following:

\begin{definition}\label{def:link_diagram}
A \emph{multi-pointed Heegaard link diagram} for $\bL=(L,\ww,\zz)$ in $Y^3$ is a tuple $(\Sigma,\ta,\tb,\ww,\zz)$ satisfying the following:
\begin{itemize}
\item $(\Sigma,\ta,\tb,\ww)$ and $(\Sigma,\ta,\tb,\zz)$ are embedded Heegaard diagrams for $(Y,\ww)$ and $(Y,\zz)$, respectively, in the sense of Definition~\ref{def:heegaard_diagram}.
\item $L\cap \Sigma=\ww\cup \zz$, and furthermore $L$ intersects $\Sigma$ positively at $\zz$ and negatively at $\ww$.
\item $L\cap U_{\a}$  (resp. $L\cap U_{\b}$) is a boundary-parallel tangle in $U_{\a}$ (resp. $U_{\b}$).
\end{itemize}
\end{definition}

Given a multi-pointed Heegaard link diagram $(\Sigma,\ta,\tb,\ww,\zz)$ for $(Y,\bL)$, the \emph{link Floer chain complex} is defined as follows.
Let 
\[\scR=\F_2[\scU,\scV],\ \ \scRi=\F_2[\scU,\scU^{-1},\scV,\scV^{-1}].\]
We define the chain complex $\cCFL^-(\Sigma,\ta,\tb,\ww,\zz,\mathfrak{s})$ to be the free $\scR$-module generated by $\xx\in \T_{\a}\cap \T_{\b}$ with $\mathfrak{s}_{\ww}(\xs)=\mathfrak{s}$. The differential is given by the formula
\begin{equation}\label{eq:differential_link}
\partial \xx=\sum_{\yy\in\T_{\a}\cap\T_{\b}}\sum_{\substack{\phi\in\pi_2(\xx,\yy)\\\mu(\phi)=1}}\#(\cM(\phi)/\R) \scU^{n_{w_1}(\phi)+\cdots +n_{w_n}(\phi)} \scV^{n_{z_1}(\phi)+\cdots+n_{z_n}(\phi)}\cdot \yy,
\end{equation}
extended $\scR$-equivariantly. The differential $\d$ squares to 0.

There is a larger version of the link Floer complex, which we call the \emph{full link Floer complex},  denoted by $\cCFL^-_{\full}(Y,\bL,\mathfrak{s})$. As a module, $\cCFL^-_{\full}(Y,\bL,\mathfrak{s})$ is freely generated over the ring $\F[\scU_1,\dots \scU_n,\scV_{1},\dots, \scV_n]$ by $\bT_{\a}\cap \bT_{\b}$. The differential is similar to~\eqref{eq:differential_link}, except we use the weight $n_{w_i}(\phi)$ for the variable $\scU_i$, and the weight of $n_{z_i}(\phi)$ for the variable $\scV_i$. In general, $\cCFL^-_{\full}(Y,\bL,\mathfrak{s})$ is a \emph{curved chain complex}, i.e. $\d^2=\omega_{\bL}\cdot \id$, for some $\omega_{\bL}\in \F[\scU_1,\dots, \scU_n,\scV_1,\dots, \scV_n]$; see \cite[Lemma 2.1]{ZemQuasi}. 

\subsection{Homological actions}\label{sub:homol}

Ozsv\'{a}th and Szab\'{o} describe an action of $\Lambda^*(H_1(Y)/\mathrm{Tors})$ on the homology group $\HF^-(Y,w,\mathfrak{s})$; see \cite[Section~4.2.5]{OSDisks}. For a multi-pointed 3-manifold $(Y,\ww)$, there is an analogous action of the relative homology group $H_1(Y,\ws)$ on $\CF^-(Y,\ww,\mathfrak{s})$ \cite{ZemGraphTQFT}. See \cite{Ni-homological} for a similar description on $\widehat{\HF}(Y,\ws)$. In this section, we recall the construction and describe some analogs on link Floer homology.

If $(\Sigma,\as,\bs,\ws)$ is a multi-pointed Heegaard diagram, and $\lambda$ is a path which connects two distinct basepoints $w_1,w_2\in \ws$, then there is a \emph{relative homology action} $A_\lambda$, which is an endomorphism of $\CF^-(\Sigma,\as,\bs,\ws,\mathfrak{s})$ and satisfies
\begin{equation}
A_\lambda\d+\d A_\lambda=U_{1}+U_{2}. \label{eq:A-lambda-map-3-manifolds}
\end{equation}
See \cite[Lemma 5.1]{ZemGraphTQFT}.

The map $A_\lambda$ is defined via the formula
\begin{equation}
A_\lambda(\xs)=\sum_{\ys\in \bT_{\a}\cap \bT_{\b}} \sum_{\substack{\phi\in \pi_2(\xs,\ys)\\ \mu(\phi)=1}} a(\lambda,\phi) \# (\cM(\phi)/\R)U_{1}^{n_{w_1}(\phi)}\cdots U_{n}^{n_{w_n}(\phi)}\cdot \ys.
\label{eq:relative-hom-action}
\end{equation}
Here $a(\lambda,\phi)\in \mathbb{F}_2$ is a quantity determined as follows. Homotope the path $\lambda$ so that it is an immersed curve in $\Sigma$, transverse to the $\as$ and $\bs$ curves. Then $a(\lambda,\phi)$ is obtained by summing the total change of the class $\phi$ across the $\as$ curves, as one travels along $\lambda$; compare \cite[Section 5.1]{ZemGraphTQFT}.

If $(\Sigma,\as,\bs,\ws,\zs)$ is a multi-pointed Heegaard link diagram, and $\lambda$ connects two basepoints $w_1$ and $w_2$, there is an analogous map $A_\lambda$ on the link Floer homology. In contrast to~\eqref{eq:A-lambda-map-3-manifolds}, we have
\begin{equation}
A_\lambda \d+ \d A_\lambda=\scU_{1}\scV_{1}+\scU_{2} \scV_{2},
\label{eq:homology-action-links}
\end{equation}
where $\scV_1$ denotes the variable for the basepoint $z_1$ which immediately follows $w_1$ with respect to the ordering of basepoints on the link, and similarly $\scV_2$ is the variable for the basepoint $z_2$ which immediately follows $w_2$. The proof follows from the same strategy as \cite[Lemma 5.1]{ZemGraphTQFT}: one counts the ends of index 2 families of holomorphic disks. There are two types of ends, pairs of index 1 holomorphic disks as well as index 2 boundary degenerations. Pairs of index 1 holomorphic disks contribute the left-hand side of~\eqref{eq:homology-action-links}, while the count of boundary degenerations, weighted by $a(\lambda,\phi)$, contributes the right-hand side.

If $z_i\in \zs$, then there is an endomorphism of $\cCFL^-_{\full}(Y,\bL,\frs)$ defined by the formula
\[
\Psi_{z_i}(\xs)=\scV_i^{-1}\sum_{\ys\in \bT_{\a}\cap \bT_{\b}} \sum_{
\substack{\phi \in \pi_2(\xs,\ys)\\ \mu(\phi)=1}} n_{z_i}(\phi) \# (\cM(\phi)/\R) \scU_1^{n_{w_1}(\phi)}\cdots \scU_n^{n_{w_n}(\phi)}\scV_1^{n_{z_1}(\phi)}\cdots \scV_n^{n_{z_n}(\phi)}\cdot \ys.
\]
We call $\Psi_{z_i}$ the \emph{basepoint action} of $z_{i}$. Given $w_i\in \ws$, there is an analogous endomorphism $\Phi_{w_i}$. The map $\Psi_{z_i}$ satisfies
\[
\Psi_{z_{i}}\d+\d \Psi_{z_{i}}=\scU_{j}+\scU_{j+1},
\]
where $w_j$ and $w_{j+1}$ are the $\ws$ basepoints adjacent to $z_i$ on the link. In particular, if we identify all of the $\scU_i$ variables to a single $\scU$, then $\Psi_{z_i}$ is a chain map. See \cite[Lemma~4.1]{SarkarMaslov} or \cite[Lemma~3.1]{ZemQuasi}. Similarly, if $z_i$ is on a link component which has only one other basepoint, then $\Psi_{z_i}$ is also a chain map.

\subsection{Heegaard Floer homology of a knotification}\label{sub:HF_knot}

\begin{definition}[Knotification]\label{def:knotification}
  Let $\cL=L_1\cup\dots\cup L_n$ be a null-homologous link in a 3-manifold $Y$. 
  \begin{enumerate}
  \item A \emph{partial knotification} of $\cL$ with respect to components $L_i,L_j$
  is a $(n-1)$-component null-homologous link $\cL_{ij}$ in $Y\# S^2\times S^1$ obtained by connecting $L_i$ and $L_j$ with an oriented band going across the $S^2\times S^1$ summand. \item A \emph{knotification} of $\cL$ is a knot $\wh{\cL}$ in $Y\#^{n-1}S^2\times S^1$ obtained by consecutive partial knotifications.
  \end{enumerate}
\end{definition}
The isotopy type $\wh{\cL}$ does not depend on the choices made; see \cite[Propostion 2.1]{OSKnots}.

Suppose $\bL=(\cL,\ws,\zs)$ is an $n$-component link in $\#^m S^2\times S^1$, equipped with $2n$ basepoints, and $\bL'$ is a multi-pointed link in $\#^{m+1} S^2\times S^1$, obtained by knotifying the components $L_{n-1}$ and $L_n$ of $\cL$. Furthermore, we assume that the basepoints on the link components $L_1,\dots, L_{n-2}$ are unchanged in $\bL'$, and on $L_{n-1}'$ we have only the two basepoints $w_n$ and $z_{n-1}$. There are two natural maps
\[
\begin{split}
F&\colon \cCFL^-(\#^m S^2\times S^1, \bL)\to \cCFL^-(\#^{m+1} S^2\times S^1,\bL')\\
G& \colon \cCFL^-(\#^{m+1} S^2\times S^1,\bL')\to \cCFL^-(\#^m S^2\times S^1, \bL).
\end{split}
\]

The map $F$ is the link cobordism map for a 4-dimensional 1-handle, followed by a saddle which crosses over the 1-handle. The decoration on the saddle consists of an arc, which connects the two link components of $\bL$. Outside of the saddle region, the decoration consists of ``vertical'' arcs which connect $\bL$ to $\bL'$. See the left-hand side of \cite[Figure~5.1]{ZemConnectedSums}. The map $G$ is the map for the link cobordism obtained by reversing the orientation and turning around the above cobordism for $F$.

The following is a key lemma which we use to compute the $H_1$ action for knotified knots:

\begin{proposition}\label{prop:knotif_Floer}
 Suppose $\bL$, $\bL'$, $F$ and $G$ are as above. Let $\lambda$ be an arc in $\#^{m}S^2\times S^1$ which connects the $\ws$ basepoints of $L_{n-1}$ and $L_{n}$. Let $\g$ be the unique element of $H_1(\#^{m+1} S^2\times S^1)$ obtained by joining the ends of $\lambda$ across the 1-handle used in knotification.  We have the following:
\begin{itemize}
  \item[(a)] $F$ and $G$ are homogeneously graded chain homotopy inverses.
  \item[(b)]  The map $F$ satisfies
\end{itemize}
\[
F(A_\lambda+\scU \Phi_{w_n})\simeq F (A_{\lambda}+\scV\Psi_{z_n})\simeq A_\g F.
\]
\end{proposition}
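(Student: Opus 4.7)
The plan is to realize $F$ and $G$ as link cobordism maps in Zemke's link TQFT, to choose a multi-pointed Heegaard diagram adapted to the knotification, and to deduce both statements by combining general TQFT invariance with an explicit boundary-degeneration count.

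For part (a), I would decompose $F$ as the composition of a stabilization map associated with a $4$-dimensional $1$-handle (producing the new $S^2 \times S^1$ summand), followed by a saddle/triangle map associated with a band that crosses the $1$-handle and merges $L_{n-1}$ with $L_n$. The map $G$ is the link cobordism map for the reversed cobordism. The composite $G \circ F$ is then the link cobordism map for a decorated surface cobordism from $\bL$ to itself which is isotopic rel boundary to the identity decorated cobordism: the $1$-handle cancels the dual $2$-handle, while the two bands on the merged component compose to a trivial tangle cobordism. Invariance of the TQFT under isotopies of decorated surfaces, as proved in \cite{ZemCFLTQFT}, then yields $G \circ F \simeq \id$, and symmetrically $F \circ G \simeq \id$. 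Homogeneity of the grading follows from the grading-change formula for link cobordism maps, whose Euler characteristic, signature, and first Chern class contributions all vanish for these elementary decorated cobordisms.

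For part (b), I would work on a stabilized Heegaard diagram for $\bL'$ in which the arc $\lambda$ in $\#^{m} S^2 \times S^1$ lifts to a path on the new Heegaard surface whose closure across the stabilization $1$-handle represents the homology class $\gamma$. The core computation is $A_\gamma \circ F$, written via \eqref{eq:relative-hom-action} applied to a representative of $\gamma$ chosen to consist of a $\lambda$-piece together with an arc running across the $1$-handle. The $\lambda$-piece reproduces $F \circ A_\lambda$ after commuting the relative homology count past the triangle map. The remaining piece cannot contribute through the generic $A_\gamma$ mechanism once $F$ has been applied; instead it appears through index-$2$ boundary degenerations entirely analogous to those controlling \eqref{eq:homology-action-links}. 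These degenerations along the $w_n$-boundary of the saddle disk are precisely what the basepoint action $\Phi_{w_n}$ records, weighted by an additional factor of $\scU$, yielding $\scU \cdot F \circ \Phi_{w_n}$ and establishing the first relation. The second relation follows by running the symmetric argument using a parallel arc on the $z$-side of the band.

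The principal obstacle is the boundary-degeneration analysis: one must verify that the non-generic ends of the relevant index-$2$ moduli spaces are accounted for exactly by the basepoint actions $\Phi_{w_n}$ and $\Psi_{z_n}$ with the correct $\scU$ and $\scV$ weights, and that the composition of the stabilization and triangle maps interacts with these degenerations in the predicted fashion. This is a careful but direct adaptation of the argument deriving \eqref{eq:homology-action-links}, and is where the bulk of the technical work of the proof lies.
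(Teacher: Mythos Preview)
Your approach to part~(a) is essentially the same as the paper's and is fine.

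For part~(b), however, there is a genuine gap: the mechanism you invoke is not the one that actually produces the $\scU\Phi_{w_n}$ and $\scV\Psi_{z_n}$ terms. Index-$2$ boundary degenerations, as in~\eqref{eq:homology-action-links}, contribute products $\scU_i\scV_i$ to the commutator $[A_\lambda,\d]$; they do not produce the basepoint maps $\Phi_{w_n}$ or $\Psi_{z_n}$, which are formal derivatives of the differential with respect to the variables. So the sentence ``these degenerations along the $w_n$-boundary of the saddle disk are precisely what the basepoint action $\Phi_{w_n}$ records'' is a conflation of two different phenomena, and the argument as written does not go through.

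What the paper actually does is the following. Write $F=S^-_{w_n,z_{n-1}} F_B^{\ws} F_1$ explicitly. One first shows that $A_\lambda+\scV\Psi_{z_n}$ commutes (up to homotopy) with $F_1$ trivially and with $F_B^{\ws}$ via an index-$1$ triangle-end count (not boundary degenerations). The key geometric input is then that on the knotified side one can write $\g=\lambda*\lambda'$, where $\lambda'$ is a subarc of $\bL'$ chosen so that $a(\lambda',\phi)=n_{w_n}(\phi)-n_{z_n}(\phi)$; this immediately gives $A_{\lambda'}=\scU\Phi_{w_n}+\scV\Psi_{z_n}$. Hence $A_\lambda+\scV\Psi_{z_n}=A_\g+\scU\Phi_{w_n}$ on the knotified complex. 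The final step is that the quasi-destabilization satisfies $S^-\Phi_{w_n}\simeq 0$ (from $S^-S^+S^-\simeq 0$), so the unwanted $\scU\Phi_{w_n}$ term disappears after composing with $S^-$. The equivalence with the $\scU\Phi_{w_n}$ version is obtained not by a ``symmetric $z$-side'' argument but by observing that $A_{L_n}\simeq 0$ (since $[L_n]=0$ in $H_1$) and $A_{L_n}=\scU\Phi_{w_n}+\scV\Psi_{z_n}$ on $\cCFL^-(\bL)$. You are missing both the $\lambda'$ trick and the role of the destabilization in killing $\Phi_{w_n}$.
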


\begin{proof}
  To simplify the notation, we will describe the case when $\cL$ is a link in $S^3$ with two components $L_1$ and $L_2$. We begin with claim~(a). The proof is formally identical to the proof of \cite[Proposition~5.1]{ZemConnectedSums} and follows from two 4-dimensional surgery relations \cite[Propositions~5.2 and 5.4]{ZemConnectedSums}.
  
We now move onto claim~(b). We first show that
\begin{equation}
F (A_{\lambda}+\scV\Psi_{z_2})\simeq A_\g F. \label{eq:main-claim-H1-action}
\end{equation}

 By definition, we may take 
 \begin{equation}
 F=S_{w_2,z_1}^- F_B^{\ws} F_1,
 \label{eq:explicit_form_of_F}
 \end{equation}
  where $F_1$ is the 1-handle map, $S_{w_2,z_1}^-$ is a quasi-destabilization map, and $F_{B}^{\ws}$ is a type-$\ws$ saddle map;
  see \cite{ZemCFLTQFT} for precise definitions of the relevant maps.

  We have now 
\begin{equation}
\label{eq:firsthalf}
F_1 (A_\lambda+\scV \Psi_{z_2})=(A_\lambda+\scV\Psi_{z_2})F_1
\end{equation}
by the same argument that the 1-handle is a chain map \cite[Section~4.3]{OSTriangles} (see also \cite[Lemma~8.11]{ZemGraphTQFT}).  Analogously, the computation of the quasi-stabilized differential in \cite[Proposition 5.3]{ZemQuasi} implies that
\[
A_\g S_{w_2,z_1}^-=S_{w_2,z_1}^- A_\g.
\]
Hence, it is sufficient to show that
\[
F_B^{\ws} (A_\lambda+\scV \Phi_{z_1})=A_\g F_{B}^{\ws}.
\]
We recall the definition of the map $F_{B}^{\ws}$. We pick a Heegaard triple $(\Sigma,\as,\bs,\bs',\ws,\zs)$ subordinate to the band \cite[Defintion~6.2]{ZemCFLTQFT}. The diagram $(\Sigma,\bs,\bs',\ws,\zs)$ contains two canonical intersection points, $\Theta^{\ws}_{\b,\b'}$ and $\Theta^{\zs}_{\b,\b'}$, where $\Theta^{\ve{o}}_{\b,\b'}$ is the top degree generator with respect to the $\gr_{\ve{o}}$-grading, where $\ve{o}\in \{\ve{w},\ve{z}\}$. By definition
\[
F_{B}^{\ws}(\xs)=F_{\a,\b,\b'}(\xs,\Theta_{\b,\b'}^{\zs}).
\]

Counting the ends of Maslov index 1 families of holomorphic triangles, weighted by $a(\lambda,\psi)$, we obtain the relation
\begin{equation}
\begin{split}
&F_{\a,\b,\b'}(A_{\lambda}(\xs), \Theta_{\b,\b'}^{\zs})+A_{\lambda}(F_{\a,\b,\b'}(\xs,\Theta_{\b,\b'}^{\zs})\\
=& F_{\lambda}^A(\d\xs,\Theta_{\b,\b'}^{\zs})+F_{\lambda}^A(\xs,\d \Theta_{\b,\b'}^{\zs})+\d F_{\lambda}^A(\xs,\Theta_{\b,\b'}^{\zs});
\end{split}
\label{eq:gromov-compactness-homology}
\end{equation}
see \cite[Lemma 5.2]{ZemGraphTQFT}. Here $F_{\lambda}^A$ counts index 0 holomorphic triangles with an extra factor of $a(\lambda,\psi)$.  Note that one might expect an extra term involving $F_{\a,\b,\b'}(\xs,A_{\lambda}\Theta_{\b,\b'}^{\zs})$, however this term vanishes, since $A_\lambda$ weights disks based on their changes across the $\as$ curves, and $\Theta_{\b,\b'}^{\zs}\in \bT_{\b}\cap \bT_{\b'}$.

Similarly, counting the ends of index 1 families of holomorphic triangles, weighted by $n_{z_2}(\psi)$, we obtain
\begin{equation}
\begin{split}
  &F_{\a,\b,\b'}( \scV\Psi_{z_2}(\xs), \Theta_{\b,\b'}^{\zs})+F_{\a,\b,\b'}(\xs,\scV \Psi_{z_2}(\Theta_{\b,\b'}^{\zs}))+\scV \Psi_{z_2}(F_{\a,\b,\b'}(\xs,\Theta_{\b,\b'}^{\zs}))\\
=& F'(\d\xs,\Theta_{\b,\b'}^{\zs})+F'(\xs,\d \Theta_{\b,\b'}^{\zs})+\d F'(\xs,\Theta_{\b,\b'}^{\zs}),
\end{split}
\label{eq:gromov-compactness-Phi}
\end{equation}
where $F'$ counts index 0 triangles weighted by a factor of $n_{z_1}(\psi)$.

We now claim that
\begin{equation}
[\scV\Psi_{z_2}(\Theta_{\b,\b'}^{\zs})]=0,\label{eq:special-element-zero-homology}
\end{equation}
where the brackets denote the induced element of homology. Since the map $\Psi_{z_2}$ and the element $\Theta_{\b,\b'}^{\zs}$ are diagram independent, we may check the relation in~\eqref{eq:special-element-zero-homology} on a genus 0 diagram for an unknot with 4-basepoints; see Figure~\ref{fig:1}. On this diagram, $\Psi_{z_2}(\Theta_{\b,\b'}^{\zs})=0$.

\begin{figure}[H]
	\centering
	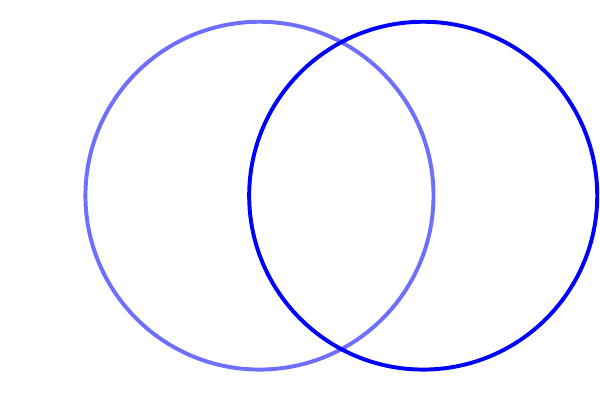
	\caption{An unkot with 4 basepoints. The dashed arc is $\lambda$.}\label{fig:1}
\end{figure}

Combining ~\eqref{eq:gromov-compactness-homology}, ~\eqref{eq:gromov-compactness-Phi} with~\eqref{eq:special-element-zero-homology}, we obtain
\begin{equation}
F_{B}^{\ws}(A_\lambda+\scV\Psi_{z_2})\simeq (A_\lambda+\scV\Psi_{z_2})F_{B}^{\ws}. \label{eq:can-commute-F_B-A-lambda}
\end{equation}

Next, consider a path $\lambda'$ from $w_1$ to $w_2$, which is a subarc of $\bL'$. We choose $\lambda'$ so that it is oriented from $w_1$ to $w_2$. There are two such subarcs of $\bL'$, and we pick the one so that the portion of $\lambda'$ nearest to $w_1$ is in the beta-handlebody (equivalently, we pick the one which goes over $B$ before arriving at a $\zs$ basepoint). Without loss of generality, we may assume that $\lambda'$ crosses over $z_2$. See Figure~\ref{fig:2}. We define 
\[
\gamma:=\lambda* \lambda',
\]
where $*$ denotes concatenation.

\begin{figure}[H]
	\centering
\begingroup%
  \makeatletter%
  \providecommand\color[2][]{%
    \errmessage{(Inkscape) Color is used for the text in Inkscape, but the package 'color.sty' is not loaded}%
    \renewcommand\color[2][]{}%
  }%
  \providecommand\transparent[1]{%
    \errmessage{(Inkscape) Transparency is used (non-zero) for the text in Inkscape, but the package 'transparent.sty' is not loaded}%
    \renewcommand\transparent[1]{}%
  }%
  \providecommand\rotatebox[2]{#2}%
  \newcommand*\fsize{\dimexpr\f@size pt\relax}%
  \newcommand*\lineheight[1]{\fontsize{\fsize}{#1\fsize}\selectfont}%
  \ifx\svgwidth\undefined%
    \setlength{\unitlength}{175.3611689bp}%
    \ifx\svgscale\undefined%
      \relax%
    \else%
      \setlength{\unitlength}{\unitlength * \real{\svgscale}}%
    \fi%
  \else%
    \setlength{\unitlength}{\svgwidth}%
  \fi%
  \global\let\svgwidth\undefined%
  \global\let\svgscale\undefined%
  \makeatother%
  \begin{picture}(1,0.49650226)%
    \lineheight{1}%
    \setlength\tabcolsep{0pt}%
    \put(0,0){\includegraphics[width=\unitlength,page=1]{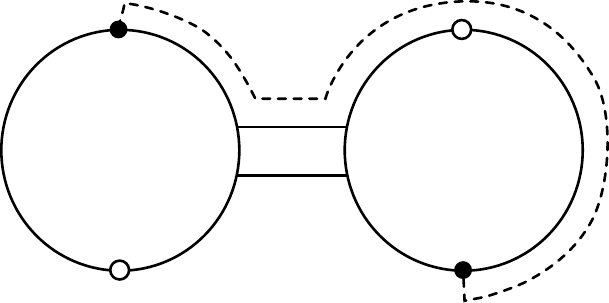}}%
    \put(0.47192573,0.35775276){\makebox(0,0)[t]{\lineheight{1.25}\smash{\begin{tabular}[t]{c}$\lambda'$\end{tabular}}}}%
    \put(0.19533487,0.38197188){\makebox(0,0)[t]{\lineheight{1.25}\smash{\begin{tabular}[t]{c}$w_1$\end{tabular}}}}%
    \put(0.19657146,0.09016132){\makebox(0,0)[t]{\lineheight{1.25}\smash{\begin{tabular}[t]{c}$z_1$\end{tabular}}}}%
    \put(0.75907555,0.38426624){\makebox(0,0)[t]{\lineheight{1.25}\smash{\begin{tabular}[t]{c}$z_2$\end{tabular}}}}%
    \put(0.75907567,0.09091731){\makebox(0,0)[t]{\lineheight{1.25}\smash{\begin{tabular}[t]{c}$w_2$\end{tabular}}}}%
    \put(0.46430862,0.2268064){\makebox(0,0)[t]{\lineheight{1.25}\smash{\begin{tabular}[t]{c}$B$\end{tabular}}}}%
  \end{picture}%
\endgroup%

	\caption{The configuration of the band $B$, the basepoints, and the arc $\lambda'\subset \bL'$.}\label{fig:2}
\end{figure}

On the Heegaard diagram, we may choose $\lambda'$ to cross only the alpha curves between $w_1$ and $z_2$, and only the beta curves between $z_2$ and $w_2$. Clearly,
\[
a(\lambda',\phi)=n_{w_2}(\phi)-n_{z_2}(\phi).
\]
Hence, $A_{\lambda'}=\scU\Phi_{w_2}+\scV\Psi_{z_2}$, or equivalently
\begin{equation}
\scV\Psi_{z_2}= A_{\lambda'}+\scU\Phi_{w_2}.\label{eq:A-lambda-'}
\end{equation}
Combining~\eqref{eq:can-commute-F_B-A-lambda} and~\eqref{eq:A-lambda-'},
we obtain
\begin{equation}
\begin{split}
F(A_\lambda+\scV\Psi_{z_2})&\simeq S_{w_2,z_1}^- (A_{\lambda}+A_{\lambda'}+\scU \Phi_{w_2})F_{B}^{\ws} F_1\\
&\simeq S_{w_2,z_1}^- (A_\g+ \scU\Phi_{w_2})F_{B}^{\ws} F_1\\
&\simeq A_\g S_{w_2,z_1}^- F_B^{\ws} F_1.
\end{split}
\label{eq:last-steps-H_1-rel}
\end{equation}
The second line of~\eqref{eq:last-steps-H_1-rel} follows from the relation $A_{\g}\simeq A_{\lambda}+A_{\lambda'}$. The final line follows from~\eqref{eq:firsthalf}, as well as the relation that $S_{w_2,z_1}^- \Phi_{w_2}\simeq S_{w_2,z_1}^-S_{w_2,z_1}^+S_{w_2,z_1}^-\simeq 0$ by \cite[Lemmas~4.11 and 4.13]{ZemCFLTQFT}, completing the proof of~\eqref{eq:main-claim-H1-action}.

Finally, to see that
\[
F(A_\lambda+\scU\Phi_{w_2})\simeq A_\g F,
\]
it is sufficient to show that $\scV\Psi_{z_2}\simeq \scU\Phi_{w_2}$ on $\cCFL^-(\bL)$. To see this, we note that on a diagram for $\bL$, we can consider a shadow of the link component $L_2$. The arc $L_2\setminus \{w_2,z_2\}$ contains two subarcs, one of which intersects only the alpha curves, and one of which intersects only the beta curves. Hence $a(L_2,\phi)=n_{w_2}(\phi)-n_{z_2}(\phi)$ for any class of disks $\phi$. On the other hand, this implies that the homology action associated to $0=[L_2]\in H_1(S^3)$ satisfies
\[
0\simeq A_{L_2}=\scU\Phi_{w_2}+\scV\Psi_{z_2},
\]
completing the proof.
\end{proof}

The homology action on full knotifications may be computed by iterating the above result, via the following lemma:

\begin{lemma}\label{lem:easier-commutations} Let $\bL$, $\bL'$, $F$ and $G$ be as in Proposition~\ref{prop:knotif_Floer}. 
\begin{enumerate}
\item\label{commute1} Suppose that $\g\in H_1(\#^m S^2\times S^1)$. Write $\g$ also for the induced element of $H_1(\#^{m+1} S^2\times S^1)$. Then $A_\g$ commutes with $F$ and $G$ up to chain homotopy.
\item\label{commute2} If $\lambda$ is an arc in $\#^{m} S^2\times S^1$ which connects two components of $L_{1},\dots, L_{n-2}$, then the relative homology map $A_{\lambda}$ commutes with $F$ and $G$ up to chain homotopy.
\item\label{commute3} If $w$ and $z$ are basepoints on one of the link components $L_1,\dots, L_{n-2}$, then $\Phi_w$ and $\Psi_z$ commute with $F$ and $G$ up to chain homotopy.
\end{enumerate}
\end{lemma}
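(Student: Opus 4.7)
The plan is to verify each of the three claims by decomposing $F$ exactly as in~\eqref{eq:explicit_form_of_F}, namely $F = S_{w_2,z_1}^- F_B^{\ws} F_1$, together with the analogous decomposition of $G$, and checking that each factor commutes up to chain homotopy with the given action. Because in all three cases the relevant action can be supported away from the 1-handle attaching region, the band $B$, and the quasi-stabilization disk, the arguments will be parallel to, but strictly simpler than, the proof of Proposition~\ref{prop:knotif_Floer}: the troublesome term $\scV\Psi_{z_2}(\Theta_{\b,\b'}^{\zs})$ that had to be killed via~\eqref{eq:special-element-zero-homology} will either be absent or its analog will vanish for the same small-model reasons.

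For~\eqref{commute1}, I would first choose an immersed representative of $\gamma$ in the Heegaard surface disjoint from the feet of the 1-handle, from $B$, and from the destabilization disk. Then the 1-handle map $F_1$ commutes with $A_\gamma$ on the nose, by the standard argument that disk classes split across a 1-handle whose feet miss $\gamma$ (cf.~\cite[Lemma~8.11]{ZemGraphTQFT}). The saddle map $F_B^{\ws}$ commutes up to homotopy by the Gromov compactness argument that produced~\eqref{eq:gromov-compactness-homology}, but now with no boundary-degeneration correction because $\gamma$ is closed and disjoint from the band. Finally, $S_{w_2,z_1}^-$ commutes with $A_\gamma$ by the quasi-stabilized differential computation in \cite[Proposition~5.3]{ZemQuasi}. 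The same reasoning applied to $G$ completes the claim.

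For~\eqref{commute2}, isotope $\lambda$ so that it lies in the subsurface of $\Sigma$ carrying the components $L_1,\dots,L_{n-2}$ and avoids all three model regions. Each factor of $F$ then commutes with $A_\lambda$ for exactly the reasons in~\eqref{commute1}; the only new point is that, at the saddle step, the boundary-degeneration term from \cite[Lemma~5.1]{ZemGraphTQFT} outputs the difference of the $\scU\scV$ weights of the endpoints of $\lambda$, which are unchanged by the cobordism, so it cancels. For~\eqref{commute3}, I would similarly place $w$ (respectively $z$) on a component disjoint from the knotification data. The maps $\Phi_w$ and $\Psi_z$ then commute with $F_1$ since the 1-handle triangle counts do not see $w$ or $z$, with $F_B^{\ws}$ by a compactness argument mirroring~\eqref{eq:gromov-compactness-Phi} where now the analog of~\eqref{eq:special-element-zero-homology} is automatic (the basepoint in question is not one of the four basepoints in Figure~\ref{fig:1}), and with $S_{w_2,z_1}^-$ by \cite[Lemmas~4.11 and~4.13]{ZemCFLTQFT}.

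The main technical point in each case will be setting up the Heegaard triple $(\Sigma,\as,\bs,\bs',\ws,\zs)$ subordinate to the band so that the chosen representative of the action is genuinely disjoint from the small subsurface supporting the band; once that is arranged, every ``error'' term in the Gromov compactness identities vanishes for the same structural reasons as in Proposition~\ref{prop:knotif_Floer}. No new geometric input beyond that proof is required.
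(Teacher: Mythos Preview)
Your proposal is correct and follows precisely the approach the paper indicates: the paper omits the proof, saying only that it is ``similar to the proof of Proposition~\ref{prop:knotif_Floer} (though strictly easier)'' and referring to \cite[Section~5]{ZemGraphTQFT} and \cite[Section~4]{ZemCFLTQFT}. Your decomposition of $F$ into $F_1$, $F_B^{\ws}$, and the quasi-destabilization, together with the observation that the relevant actions can be supported away from the knotification region so that the correction terms vanish, is exactly the intended argument.
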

The proof of Lemma~\ref{lem:easier-commutations} is similar to the proof of Proposition~\ref{prop:knotif_Floer} (though strictly easier), and hence we omit it. We refer the reader to \cite[Section~5]{ZemGraphTQFT} and \cite[Section~4]{ZemCFLTQFT} for related results.


\subsection{Generalized correction terms of Levine and Ruberman}
Suppose $Y$ is an oriented closed three-dimensional manifold. The module $\HF^{\infty}(Y)$ is \emph{standard} if for each torsion $\Spin^c$ structure $\sss$, 
$$\HF^{\infty}(Y, \sss)\cong \Lambda^{\ast} H^{1}(Y; \mathbb{Z})\otimes_{\mathbb{Z}} \mathbb{F}[U, U^{-1}]$$
 as $\Lambda^{\ast}(H_{1}(Y; \mathbb{Z})/\textup{Tors})\otimes_{\mathbb{Z}} \mathbb{F}[U]$-modules.  
 Any manifold $Y$ for which the triple cup product vanishes is standard, see \cite{Lidman_flavor} (and also \cite[Theorem 3.2]{LevineRuberman}).
In particular, a connected sum of finitely many copies of $S^1\times S^2$ has standard $\hf^\infty$. Hence, a large surgery on a null-homologous
knot in $\# S^1\times S^2$ has standard $\hf^\infty$; see \cite{OSIntersectionForms}. This means that essentially all 3-manifolds we are going to consider have
standard $\hf^\infty$.

There is an action (up to homotopy) of $\Lambda^*(H_1(Y)/\Tors)$ on $\CF^-(Y,\frs)$. Expanding on work of Ozsv\'ath and Szabo \cite{OSIntersectionForms}, Levine and Ruberman \cite{LevineRuberman} associate a $d$-invariant to any primitive subspace $V$ of $H_1(Y)/\Tors$ (recall that a \emph{primitive subspace} is a free submodule whose quotient is free)  
and any $\Spin^c$ structure $\sss$ on $Y$ whose first Chern class is torsion as long as $\HF^\infty(Y)$ is standard.
We denote this invariant by $d(Y,\sss,V)$.
For our purposes, the two most important instances are the invariants 
\[
d_{\bot}(Y,\sss):=d(Y,\sss,H_1(Y)/\Tors),\ \ d_{\top}(Y,\sss):=d(Y,\sss,\{0\}),
\]
which correspond approximately to the kernel and cokernel, respectively of the $H_{1}(Y)/ \Tors$ action. 


The key property of these invariants is the following inequality, generalizing the Ozsv\'ath--Szabo inequality.
\begin{theorem}[see \expandafter{\cite[Theorem 4.7]{LevineRuberman}}]\label{thm:estimate}
  Suppose $X$ is a connected four-manifold such that $b_2^+(X)=0$ and $\partial X=Y$. Suppose $\sss$ is a \spinc{} structure on $Y$ that extends to a \spinc{} structure $\sst$
  on $X$. Then
  \[
   d(Y,\sss,V)\ge\frac14\left(c_1^2(\sst)+b_2^-(X)\right)+\frac12b_1(Y)-\rk V,
   \]
 if $V$ contains $\ker \left(H_1(Y) / \textup{Tors}\to H_1(X) / \textup{Tors}\right)$.
\end{theorem}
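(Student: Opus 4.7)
The plan is to adapt the Ozsv\'ath--Szab\'o $d$-invariant strategy of \cite{OSIntersectionForms} to the generalized setting, tracking the $H_1$-action throughout. The central tool is the cobordism $W := X \setminus \Int(B^4)$, viewed as a cobordism from $S^3$ to $Y$, and the map $F_{W,\sst}\colon \HF^-(S^3) \to \HF^-(Y,\sss)$ it induces on Heegaard Floer homology.

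First I would invoke the standard grading-shift formula: $F_{W,\sst}$ is homogeneous of degree
\[
\Delta \;=\; \tfrac{1}{4}\bigl(c_1^2(\sst) - 2\chi(W) - 3\sigma(W)\bigr).
\]
The hypothesis $b_2^+(X) = 0$ gives $\sigma(W) = \sigma(X) = -b_2^-(X)$, and combining $\chi(W) = \chi(X) - 1$ with the long exact sequence of the pair $(X,Y)$ and Poincar\'e--Lefschetz duality rewrites $\Delta$ so that the $\tfrac14(c_1^2 + b_2^-(X))$ and $\tfrac12 b_1(Y)$ terms of the theorem emerge from the topology of $W$. Since $b_2^+(W) = 0$, the classical Ozsv\'ath--Szab\'o input (see \cite[Section~9]{OSIntersectionForms}) implies that $F_{W,\sst}^{\infty}\colon \HF^\infty(S^3) \to \HF^\infty(Y,\sss)$ is non-zero; the image of the generator of $\HF^\infty(S^3) = \F[U,U^{-1}]$ is therefore a non-$U$-torsion class in $\HF^-(Y,\sss)$ whose grading is controlled by $\Delta$.

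The upgrade to Levine--Ruberman's statement then comes from $H_1$-equivariance of the cobordism map. Representing classes of $H_1(Y)/\Tors$ by immersed arcs and tracking them across a Heegaard triple subordinate to $W$ — in the spirit of Proposition~\ref{prop:knotif_Floer} — one shows that $F_{W,\sst}$ intertwines the $H_1(Y)/\Tors$-action on $\HF^-(Y,\sss)$ with the pushforward through the inclusion $Y \hookrightarrow W$. Consequently every class in $K := \ker\bigl(H_1(Y)/\Tors \to H_1(W)/\Tors\bigr)$ annihilates the image of $F_{W,\sst}$. Since the hypothesis $V \supseteq K$ forces the cobordism image into the portion of the standard model $\HF^\infty(Y,\sss) \cong \Lambda^* H^1(Y) \otimes \F[U,U^{-1}]$ relevant to $d(Y,\sss,V)$, the $\rk V$ correction arises from the combinatorics of the exterior algebra: passing from the ``full top'' of $\HF^\infty(Y,\sss)$ at grading $\tfrac12 b_1(Y)$ down to the graded piece actually computing $d(Y,\sss,V)$ costs exactly $\rk V$ grading shifts.

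The hardest part will be the precise bookkeeping with gradings and the exact definition of $d(Y,\sss,V)$: one must identify which graded piece of the standard $\HF^\infty(Y,\sss)$ realizes the invariant, verify that the cobordism image sits in the expected position under contraction by $\Lambda^{\rk V} V$, and match the shift with the computation of $\Delta$. The $H_1$-equivariance of $F_{W,\sst}$, though conceptually natural, also requires a holomorphic-disk-counting argument analogous to the one carried out in the proof of Proposition~\ref{prop:knotif_Floer}. Once these pieces are in place, the remaining content is a purely algebraic comparison of graded cyclic $\F[U]$-modules within the standard $\HF^\infty$ model.
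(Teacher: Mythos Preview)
The paper does not prove this theorem at all: it is simply quoted from \cite[Theorem~4.7]{LevineRuberman} and used as a black box. So there is no ``paper's own proof'' to compare against.

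That said, your sketch is the right outline of how Levine and Ruberman actually establish the result: puncture $X$ to get a cobordism $W\colon S^3\to Y$, use $b_2^+(W)=0$ to ensure the cobordism map is nonzero on $\HF^\infty$, compute the grading shift, and then use $H_1$-equivariance to see that the image is annihilated by $\ker(H_1(Y)/\Tors\to H_1(X)/\Tors)$, which pins down its position in the $\Lambda^*H^1(Y)$ model. One point to be careful about: the $H_1$-equivariance of cobordism maps is a result already in \cite{OSTriangles} (and refined in \cite{ZemGraphTQFT}); invoking Proposition~\ref{prop:knotif_Floer} here is not quite apt, since that proposition concerns the \emph{relative} homology action and its interaction with knotification, not the absolute $H_1(Y)$-action on $\HF^-$ of a closed 3-manifold under a 4-dimensional cobordism. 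The relevant equivariance is more elementary and does not require the basepoint-action machinery.
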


%

\subsection{$V$-invariants}\label{sub:V_inv}
The aim of this section is to gather several definitions of $V_s$-invariants. In the context of Heegaard Floer theory, all these definitions lead to the same invariants.

The first definition recalls the classical $V_s$-invariant if $C_*$ is the knot Floer complex $\cfk^-$.
\begin{definition}[$V_{s}$-invariants for complexes over \expandafter{$\F_2[U,U^{-1}]$}]\label{def:V0}
  Suppose $C_*$ is a filtered chain complex of free $\F_2[U]$ modules (with multiplication by $U$ decreasing the filtration level by $1$
  and the grading by $2$) such that the homology of the localized complex
  $U^{-1}C_*$ is equal to $F_2[U,U^{-1}]$. For $s\in\Z$ the invariant $V_s(C_*)$ is such that $-2V_s(C_*)$ is the maximal grading
  of an element $x\in C_*$ at filtration level $\le s$, and the class of $U^kx$ is non-zero in $H_*(C_*)$ for all $k\ge 0$.
\end{definition}

Next, we define the $V_{s}$-invariants of a bigraded $\scR$-module where $\scR=\F_2[\scU,\scV]$. The definition is essentially taken from \cite[Equation (10.3)]{ZemAbsoluteGradings}. Suppose $C_*$ is a bigraded chain complex over $\scR$ 
such that multiplication by $\scU$ changes the grading by $(-2,0)$ and multiplication
by $\scV$ changes the grading by $(0,-2)$.
Let $(\gr_w,\gr_z)$ denoting the bigrading.

\begin{definition}[$V_s$-invariants over $\scR$]\label{def:V1}
  Suppose $C_*$ is a chain complex over $\scR$ such that
\begin{equation}\label{eq:M_Cong}
  (\scU,\scV)^{-1}\cdot H_*(C_*)\cong \scRi=\F_2[\scU,\scV, \scU^{-1}, \scV^{-1}],
 \end{equation}
as bigraded groups. (Here $(\scU,\scV)^{-1}\cdot $ denotes localization at the non-zero monomials of $\scR$).  We write $\scA_s(C_{\ast})$ for the subcomplex of $C_*$ which has $\gr_{w}-\gr_{z}=2s$. We can view $\scA_s(C_{\ast})$ as a complex over $\bF[U]$, where $U$ acts by $\scU\scV$. We define $d(\scA_s(C_{\ast}))$ for the maximal  $\gr_{w}$-grading of a homogeneously graded, $\bF[U]$-non-torsion element of $H_*(\scA_s(C_{\ast}))$. We define
\[
V_s(C_{\ast})=-\frac{1}{2} d(\scA_s(C_{\ast})).
\]
\end{definition}
\begin{remark}\label{rem:module}
  Suppose $M$ is a graded module over $\scR$ such that $(\scU^{-1},\scV^{-1})\cdot M\cong\scRi$ as bigraded groups. We define
  $V_s(M)$ to be the $V_s(C_*)$ with $C_*$ being the chain complex with the same underlying module structure as $M$ but trivial differential.
\end{remark}
\begin{remark}
If $C_*$ is the chain complex $\cCFL^{-}(S^{3}, K)$ for a knot $K\subset S^{3}$, $V_{s}(C_*)$ is the classical $V$-function of the knot $K$. In this case, we also denote it by $V_{s}(K)$ if the context is clear.  We refer the readers to \cite[Section 1.5]{ZemAbsoluteGradings} for translating between the chain complex $\cCFL^{-}(S^{3}, K)$ and $\CFK^{-}(S^{3}, K)$.
\end{remark}

Suppose $C_*$ is as in Definition~\ref{def:V1}. Let $a,b\in\Z$. The chain complex $C_*\{a,b\}$ is defined as the chain complex
equal to $C_*$, but with grading
shifted by $(a,b)$. That is, if $x\in C_*$ has bigrading $(c,d)$, then $x\in C_*\{a,b\}$ has bigrading $(a+c,b+d)$.
\begin{lemma}\label{lem:shift}
  Suppose $C_*$ is a bigraded chain complex over $\scR$ and let $D_*=C_*\{a,b\}$ be the chain complex with shifted grading.
  Then $V_{s+(a-b)/2}(D_*)=V_s(C_*)-a/2$.
\end{lemma}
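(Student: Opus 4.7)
The plan is to unwind the definition of $V_s$ from Definition~\ref{def:V1} and track how each ingredient transforms under the bigrading shift. By definition, $V_s(C_*) = -\tfrac{1}{2} d(\scA_s(C_*))$, where $\scA_s(C_*)$ is the subcomplex on which $\gr_w - \gr_z = 2s$, and $d(\scA_s(C_*))$ records the maximum $\gr_w$-grading of an $\F_2[U]$-non-torsion homogeneous cycle (with $U = \scU\scV$). So the lemma reduces to computing how $\scA_\bullet$ and $d(\scA_\bullet)$ behave under the shift $\{a,b\}$.

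First, I would identify the diagonal subcomplexes. If $x \in C_*$ has bigrading $(c,d)$, then viewed in $D_* = C_*\{a,b\}$ it has bigrading $(a+c, b+d)$, so
\[
(\gr_w - \gr_z)(x \text{ in } D_*) = (c-d) + (a-b).
\]
Therefore $x$ contributes to $\scA_t(D_*)$ precisely when it contributes to $\scA_{t - (a-b)/2}(C_*)$. Since the differential on $D_*$ is identical to the differential on $C_*$, this is an equality of chain complexes:
\[
\scA_t(D_*) \;=\; \scA_{t - (a-b)/2}(C_*).
\]
Taking $t = s + (a-b)/2$ gives $\scA_{s+(a-b)/2}(D_*) = \scA_s(C_*)$ as complexes (only the gradings differ).

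Second, I would read off the effect on $d(\scA_\bullet)$. Under the identification above, the $\gr_w$-grading on $D_*$ is shifted by $a$ relative to that on $C_*$, while the $\F_2[U]$-module structure (with $U=\scU\scV$) is untouched. Consequently the class of $\F_2[U]$-non-torsion homogeneous cycles is the same, and the maximal $\gr_w$-grading of such a cycle shifts by $+a$:
\[
d\bigl(\scA_{s+(a-b)/2}(D_*)\bigr) \;=\; d\bigl(\scA_s(C_*)\bigr) + a.
\]
Multiplying by $-\tfrac{1}{2}$ gives $V_{s+(a-b)/2}(D_*) = V_s(C_*) - a/2$, as desired.

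There is no real obstacle here; the proof is pure bookkeeping, and the only care needed is to keep the conventions for the shift $\{a,b\}$ and the filtration index $s$ consistent. I would write out the two displayed equalities above and let the conclusion follow directly.
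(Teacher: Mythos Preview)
Your proof is correct and follows exactly the same approach as the paper's one-line proof, which simply notes that $\scA_s(C_*)=\scA_{s+(a-b)/2}(D_*)$; you have merely spelled out the bookkeeping behind that identification and the resulting $\gr_w$-shift.
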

\begin{proof}
  We use the fact that  $\scA_s(C_*)=\scA_{s+(a-b)/2}(D_*)$.
\end{proof}

In our computations, we will need to show that $V_s$-invariants of locally equivalent complexes are the same. We recall the relevant definition.
\begin{definition}
  Two chain complexes $C_*$ and $D_*$ are \emph{locally equivalent}, if there exist grading preserving, $\scR$-equivariant chain maps $f\colon C_*\to D_*$, $g\colon D_*\to C_*$
  such that both $f$ and $g$ induce the identity map on 
  $(\scU,\scV)^{-1}\cdot C_*\cong (\scU,\scV)^{-1}\cdot D_*$.
\end{definition}
We have the following result (see \cite[Section~2]{ZemConnectedSums}, \cite{HomSurvey} or \cite[Section~3]{KimParkInfiniteRank}): 
\begin{proposition}\ \label{prop:local_preserves}
  \begin{itemize}
    \item[(a)] If $C_*$ is locally equivalent to $D_*$, then $V_s(C_*)=V_s(D_*)$ for all $s$.
    \item[(b)] If $C_*$ is locally equivalent to $D_*$ and $E_*$ is locally equivalent to $F_*$, then $C_*\otimes E_*$ is locally equivalent
      to $D_*\otimes F_*$.
  \end{itemize}
\end{proposition}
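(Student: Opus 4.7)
The plan is to treat the two parts independently, with part (a) amounting to a grading comparison and part (b) to functoriality of tensor product under localization.

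\textbf{Part (a).} I would begin by unpacking the definition: $V_s(C_*)=-\tfrac12 d(\scA_s(C_*))$, where $d(\scA_s(C_*))$ is the maximal $\gr_w$-grading of a homogeneously graded class in $H_*(\scA_s(C_*))$ that is non-torsion over $\F_2[U]$, $U=\scU\scV$. Given a local equivalence $f\colon C_*\to D_*$, $g\colon D_*\to C_*$, the first observation is that $f$ and $g$ are $\scR$-equivariant and grading preserving, so they restrict to maps $\scA_s(C_*)\to \scA_s(D_*)$ and $\scA_s(D_*)\to \scA_s(C_*)$. The key step is to argue that $f$ sends $\F_2[U]$-non-torsion homology classes to $\F_2[U]$-non-torsion homology classes: if $[x]\in H_*(\scA_s(C_*))$ were such that $[f(x)]$ were $U$-torsion, then inverting $\scU$ and $\scV$ would kill its image in $(\scU,\scV)^{-1}\cdot H_*(D_*)$, contradicting the fact that $f$ induces the identity on the localizations (which are isomorphic to $\scRi$ by assumption~\eqref{eq:M_Cong}). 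Since $f$ preserves grading, this forces $d(\scA_s(D_*))\ge d(\scA_s(C_*))$; symmetrically $g$ yields the reverse inequality, and the equality of $V_s$ follows.

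\textbf{Part (b).} Given local equivalences $f\colon C_*\to D_*$, $g\colon D_*\to C_*$, $h\colon E_*\to F_*$, $k\colon F_*\to E_*$, the candidates are the tensor product maps $f\otimes h\colon C_*\otimes E_*\to D_*\otimes F_*$ and $g\otimes k$ in the reverse direction. These are $\scR$-equivariant chain maps of the correct bidegree by standard tensor product formulas. The main point to verify is that they induce the identity on the localization $(\scU,\scV)^{-1}\cdot(C_*\otimes E_*)$. For this I would use the fact that localization at the multiplicative set generated by $\scU$ and $\scV$ commutes with tensor product over $\scR$, so that the localized map is $f_{\mathrm{loc}}\otimes h_{\mathrm{loc}}=\id\otimes\id=\id$, and likewise for $g\otimes k$. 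One also needs to check that the tensor product $C_*\otimes E_*$ itself satisfies the hypothesis~\eqref{eq:M_Cong}; this follows because $\scRi\otimes_{\scRi}\scRi\cong \scRi$ as bigraded groups.

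\textbf{Expected obstacle.} The content of part (a) is almost entirely bookkeeping, but one subtlety worth double-checking is the difference between ``non-torsion over $\F_2[U]$'' with $U=\scU\scV$ and ``surviving localization at both $\scU$ and $\scV$ independently.'' For the complexes that appear in our applications, these two notions agree because the localized homology is modelled on $\scRi$ in which every homogeneous element is invertible up to multiplying by $\scU^a\scV^b$. I would spell this out once, e.g.\ by noting that a class killed after inverting $\scU\scV$ is killed after inverting $\scU$ and $\scV$ separately, and conversely. Beyond this small verification, both statements follow from standard naturality arguments, and no further work is required.
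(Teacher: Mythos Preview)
Your plan is correct. The paper does not supply its own proof of this proposition; it simply states the result and refers the reader to \cite[Section~2]{ZemConnectedSums}, \cite{HomSurvey}, and \cite[Section~3]{KimParkInfiniteRank}. Your argument fills in exactly the standard details behind those citations.

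For part~(a), the one genuine subtlety is the equivalence between ``$\F_2[U]$-non-torsion for $U=\scU\scV$'' and ``nonzero after inverting $\scU$ and $\scV$,'' and you identify and dispatch it correctly: if $\scU^a\scV^b[x]=0$ then $(\scU\scV)^{\max(a,b)}[x]=0$, and conversely. With that in hand, the grading comparison via $f$ and $g$ goes through as you describe.

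For part~(b), one small wording point: when you write ``$f_{\mathrm{loc}}\otimes h_{\mathrm{loc}}=\id\otimes\id$,'' what you mean is that the induced map on homology factors through the K\"unneth isomorphism as $(f_{\mathrm{loc}})_*\otimes (h_{\mathrm{loc}})_*$, and each factor is the identity on $\scRi$. Since $\mathrm{Tor}_1^{\scRi}(\scRi,\scRi)=0$, the K\"unneth map is an isomorphism and natural, so this is legitimate. Your check that $C_*\otimes E_*$ again satisfies~\eqref{eq:M_Cong} is the right final step.
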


We now extend Definition~\ref{def:V1} to the case of chain complexes with a group action.
Suppose $C_*$ is a bigraded chain complex over $\scR$ and $H$ is a free abelian group such that the ring $\Lambda^*H$ acts on $H_*(C_*)$, and  the action of $H$ has degree $(-1,-1)$. Let $\Tors\subset H_*(C_*)$ denote the $\scR$-torsion submodule. Define
\begin{align*}
  \cH^{\top}&=\coker\left(H\otimes (H_*(C_*)/\Tors)\to (H_*(C_*)/\Tors)\right)\\
  \cH^{\bot}&=\bigcap_{\gamma\in H}\ker(\gamma\colon (H_*(C_*)/\Tors)\to (H_*(C_*)/\Tors)).
\end{align*}
By analogy of \eqref{eq:M_Cong} we require that
\[
  (\scU,\scV)^{-1}\cdot\cH^{\top}\cong\scRi\cong(\scU,\scV)^{-1}\cdot\cH^{\bot}
\]
as relatively bigraded $\scR$-modules. Let $\cH_s^{\top}$ (resp. $\cH_s^{\bot}$) denote the $\bF[U]$-submodule generated by homogeneously graded elements $x\in \cH^{\top}$ (resp. $x\in \cH^{\bot})$ such that $\gr_w(x)-\gr_z(x)=2s$ (recall $U$ acts by $\scU\scV$). We define $d^{\top}_s(C_*)$ to be the maximal $\gr_w$-grading of a homogeneously graded, $\bF[U]$ non-torsion element of $\cH_s^{\top}$, and we define $d^{\bot}_s(C_*)$ analogously.

%

\begin{definition}\label{def:Vtop}
We set
\[
V_s^{\top}(C_*):=-\frac12 d^{\top}_s(C_*)\qquad \text{and} \qquad V_s^{\bot}(C_*)=-\frac12 d^{\bot}_s(C_*).
\]
\end{definition}

\begin{remark}
If $K$ is a null-homologous knot in a closed, oriented connected $3$-manifold $Y$ with standard $HF^{\infty}(Y)$, for simplicity,  we use $\scA_{s}(K)$ to denote $\scA_{s}(\cCFL^{-}(Y, K))$, and use $V_{s}^{\top}(K), V_{s}^{\bot}(K)$ to denote $V_s^{\top}(\cCFL^{-}(Y, K))$ and $V_{s}^{\bot}(\cCFL^{-}(Y, K))$, repsectively. 
\end{remark}

\subsection{Large surgery formula}\label{sub:large}
To set up the notation, we recall the large surgery formula \cite[Section~4]{OSDisks} and relate the $d$-invariants of the surgery on a knot to its $V_s$-invariants. We first recall the description of \spinc{} structures on a surgery.

\begin{definition}\label{def:spin_m}
  Suppose $Y$ is a closed 3-manifold and $K\subset Y$ is a  null-homologous knot. Let $\frs\in \Spin^c(Y)$, and $q\in\Z_{>0}$. For any $m\in[-q/2,q/2)\cap\Z$
  we denote by $\sss_m$ the unique \spinc{} structure on $Y_q(K)$ such that $\sss_m$ extends to a \spinc{} structure $\sst_m$ on $W$ uniquely
  characterized by the properties that $\frt_m|_Y=\frs$ and $\langle c_1(\sst_m),F\rangle+q=2m$, where $W$ is the trace of the surgery on $K$ and $F$ is the generator of $H_2(W)$
  obtained by gluing a Seifert surface for $K$ with the core of the two-handle.
\end{definition}

With this notation, we state Ozsv\'{a}th and Szab\'{o}'s large surgery theorem \cite[Theorem~4.1]{OSDisks}:
\begin{theorem}\label{thm:large_surg}
  Suppose $K\subset Y$ is a null-homologous knot in a closed 3-manifold. Suppose $q>2g_3(K)$ is an integer. For a \spinc{} structure
  $\sss_m$ on $Y$ as in Definition~\ref{def:spin_m}, there exists a quasi-isomorphism between $\CF^-(Y_q(K),\sss_m)$
  and $\scA_m$, where $\scA_m$ is a $\F[U]$ subcomplex of $\cCFL^-(Y,K,\frs)$ of elements $x$ with grading $\gr_w(x)-\gr_z(x)=2m$. If $\frs$ is torsion, then the quasi-isomorphism shifts the grading \emph{(}Maslov grading on $\CF^-(Y_q(K),\sss_m)$ and $\gr_w$-grading on $\scA_m$\emph{)} by $\tfrac{(q-2m)^2-q}{4q}$.
\end{theorem}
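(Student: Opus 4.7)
The plan is to follow the original strategy of Ozsv\'ath and Szab\'o by carefully choosing a Heegaard diagram adapted to the surgery. First I would start with a doubly-pointed Heegaard diagram $(\Sigma,\ta,\tb,w,z)$ for $(Y,K)$ in which $\beta_1$ plays the role of the meridian $\mu$ of $K$, and the basepoints $w$ and $z$ sit on either side of $\beta_1$. Winding the remaining $\beta$-curves sufficiently many times around $\mu$, one can arrange (by the usual admissibility/winding argument) that for $q$ large the diagram obtained by replacing $\beta_1$ with $\gamma_q=\lambda+q\mu$ (where $\lambda$ is the Seifert framing, realized as an embedded curve on $\Sigma$ after winding) is a weakly admissible Heegaard diagram for $Y_q(K)$.

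Next I would identify the generators. Intersection points in $\bT_\a\cap \bT_{\b'}$ (where $\tb'=(\gamma_q,\beta_2,\dots)$) lie in a small neighborhood of $\beta_1$ on $\gamma_q$, giving $q$ families indexed by $m\in \Z/q\Z$ of the form $x_m$. Using the formula for $\mathfrak{s}_w$ and the characterization of $\sss_m$ in Definition~\ref{def:spin_m}, one checks that the intersection points realizing $\sss_m$ are precisely those whose underlying intersection $\xs\in \bT_\a\cap\bT_\b$ satisfies $A(\xs)=m$, where $A$ denotes the Alexander grading; this identifies the generators of $\CF^-(Y_q(K),\sss_m)$ with the generators of $\scA_m\subset \cCFL^-(Y,K,\frs)$ (up to the $U$-power bookkeeping).

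For the differential one runs the standard area/energy argument: any holomorphic disk in the surgery diagram whose domain has a nontrivial multiplicity at $w$ or $z$ crosses $\gamma_q$ and thus accrues an area that grows linearly in $q$, so for $q>2g_3(K)$ (equivalently, $q$ larger than the diameter of the Alexander grading support) the disks contributing to $\d$ on $\CF^-(Y_q(K),\sss_m)$ are in bijection with disks in $\cCFL^-(Y,K,\frs)$ staying in $\scA_m$, with $U$ acting as $\scU \scV$. This gives the desired quasi-isomorphism on the chain level.

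Finally, for the grading shift in the torsion case, I would use the absolute grading formula via the cobordism $W$: for the generator $x_m$ one has $\gr_w(x_m)-\gr_{w}(\text{corresponding generator in }\scA_m)=\tfrac{c_1(\sst_m)^2-2\chi(W)-3\sigma(W)}{4}$, which with $c_1(\sst_m)^2=-(2m-q)^2/q$ and $\chi(W)=1$, $\sigma(W)=0$ (or $\pm 1$, depending on sign of $q$; for $q>0$ one gets $\sigma=-1$, recovering $b_2^-(W)$) simplifies to $\tfrac{(q-2m)^2-q}{4q}$. The main obstacle is the admissibility/energy analysis that forces holomorphic disks to avoid extra intersections with $\gamma_q$ when $q$ is large; everything else is bookkeeping on \spinc{} structures and the Chern-class computation.
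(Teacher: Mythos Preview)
The paper does not prove this theorem: it is stated as Ozsv\'ath and Szab\'o's large surgery theorem with a citation, and no argument is given. So there is no ``paper's proof'' to compare against. Your sketch follows the broad outline of the original Ozsv\'ath--Szab\'o/Rasmussen argument (wind the $\beta$-curves, identify generators in the surgered diagram with those of the knot complex, use an energy/area bound to match differentials for large $q$, and compute the grading shift via the cobordism), which is the standard route.

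That said, two details in your sketch need correction. First, the generator identification is stated too narrowly: the intersection points in the surgered diagram lying in $\sss_m$ do not correspond only to $\xs$ with $A(\xs)=m$; rather, every generator $\xs$ of the knot complex contributes to $\scA_m$ (as $\scU^i\scV^j\xs$ with $A(\xs)-i+j=m$), and the winding region of the corresponding point on $\gamma_q$ is what encodes the exponents. Second, your grading computation is internally inconsistent: for the trace $W\colon Y\to Y_q(K)$ with $q>0$ the intersection form is $(+q)$, so $\sigma(W)=+1$ (not $-1$) and $c_1(\sst_m)^2=(2m-q)^2/q>0$; plugging these into $(c_1^2-2\chi-3\sigma)/4$ does not yield $\tfrac{(q-2m)^2-q}{4q}$. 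The stated shift is correct, but extracting it requires care about which direction the quasi-isomorphism runs and how the $\gr_w$-grading on $\scA_m$ compares to the Maslov grading on the surgery; it is not a one-line application of the cobordism degree formula.
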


From this theorem we obtain the following well-known equalities.
\begin{theorem}\label{thm:vtop}
  Suppose $K\subset Y$ is as in Theorem~\ref{thm:large_surg} and $q>2g_3(K)$.
  \begin{itemize}
    \item[(a)] If $Y$ is a rational homology sphere, then $d(Y_q(K),\sss_m)=\tfrac{(q-2m)^2-q}{4q}-2V_m(K)$;
    \item[(b)] If $b_1(Y)>0$ and $\HF^\infty(Y)$ is standard, then $d^{\top}(Y_q(K),\sss_m)=\tfrac{(q-2m)^2-q}{4q}-2V^{\top}_m(K)$ and $d^{\bot}(Y_q(K),\sss_m)=\tfrac{(q-2m)^2-q}{4q}-2V^{\bot}_m(K)$.
  \end{itemize}
\end{theorem}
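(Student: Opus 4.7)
The plan is to deduce both parts of the theorem directly from the large surgery isomorphism of Theorem~\ref{thm:large_surg} together with the definitions of the $V$- and $d$-invariants in Subsections~\ref{sub:V_inv}.

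For part (a), since $Y$ is a rational homology sphere and $K$ is null-homologous with $q\neq 0$, the surgery $Y_q(K)$ is again a rational homology sphere, and the $d$-invariant $d(Y_q(K),\sss_m)$ is by definition the maximal Maslov grading of a non-$U$-torsion element of $\HF^-(Y_q(K),\sss_m)$. By Theorem~\ref{thm:large_surg}, the complex $\CF^-(Y_q(K),\sss_m)$ is quasi-isomorphic to $\scA_m$ via a map which shifts the $\gr_w$-grading up by $\tfrac{(q-2m)^2-q}{4q}$. Hence the maximal Maslov grading of an $U$-non-torsion element in $\HF^-(Y_q(K),\sss_m)$ equals the maximal $\gr_w$-grading of an $U$-non-torsion element of $H_*(\scA_m)$, shifted by $\tfrac{(q-2m)^2-q}{4q}$. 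By Definition~\ref{def:V1}, this maximal grading equals $-2V_m(K)$, yielding part (a).

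For part (b), the additional ingredient needed is that the quasi-isomorphism of Theorem~\ref{thm:large_surg} is equivariant with respect to the $H_1$-action. Since $K$ is null-homologous, the natural inclusion $Y\setminus\nu(K)\hookrightarrow Y_q(K)$ induces an isomorphism $H_1(Y)/\Tors\cong H_1(Y_q(K))/\Tors$, which transfers the $\Lambda^*(H_1(Y_q(K))/\Tors)$-action on $\CF^-(Y_q(K),\sss_m)$ to an action on $\scA_m\subset \cCFL^-(Y,K,\frs)$. Tracing through the definition of the large surgery quasi-isomorphism, the relative homology maps $A_\lambda$ on the surgery side correspond to the relative homology maps on the knot Floer complex (which preserve $\gr_w-\gr_z$ and therefore act on $\scA_m$). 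In particular, the cokernel $\cH^{\top}$ and kernel $\cH^{\bot}$ from Subsection~\ref{sub:V_inv} for $\scA_m$ match the corresponding modules for $\HF^-(Y_q(K),\sss_m)$ up to the grading shift $\tfrac{(q-2m)^2-q}{4q}$. Applying Definition~\ref{def:Vtop} and the definitions of $d^{\top}$, $d^{\bot}$ from Subsection~2.5 then yields
\[
d^{\top}(Y_q(K),\sss_m)=-2V^{\top}_m(K)+\tfrac{(q-2m)^2-q}{4q}
\]
and the analogous identity for $d^{\bot}$.

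The main obstacle is the verification that the large surgery quasi-isomorphism intertwines the relevant $H_1$-actions. This amounts to checking that the chain maps used in the proof of Theorem~\ref{thm:large_surg} commute up to chain homotopy with the relative homology maps $A_\lambda$ of Subsection~\ref{sub:homol} for paths $\lambda$ supported away from the surgery region. This is analogous to (but simpler than) Lemma~\ref{lem:easier-commutations}, and follows by counting ends of index~$2$ moduli spaces weighted by $a(\lambda,\phi)$ in the same manner as in the proof of Proposition~\ref{prop:knotif_Floer}. Given standard $\HF^\infty(Y)$, the resulting identifications of $\cH^{\top}$ and $\cH^{\bot}$ satisfy the localization hypothesis needed for Definition~\ref{def:Vtop}, so both equalities in part (b) follow.
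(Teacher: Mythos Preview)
Your approach is correct and is exactly what the paper has in mind: the paper gives no proof at all, simply introducing Theorem~\ref{thm:vtop} with the phrase ``From this theorem we obtain the following well-known equalities.'' Your argument supplies the omitted details in the standard way, reading off the $d$-invariants from the large surgery quasi-isomorphism and the definitions in Subsection~\ref{sub:V_inv}.

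The only point worth flagging is that for part~(b) you correctly identify the substantive step --- $H_1$-equivariance of the large surgery isomorphism --- and sketch it by analogy with Lemma~\ref{lem:easier-commutations}. This is the right idea (the large surgery map is built from holomorphic triangle counts, and one runs the same ends-of-moduli-space argument weighted by $a(\lambda,\psi)$), but note that the paper does not spell this out either; it is implicit in the ``well-known'' designation. Your level of detail here is appropriate for what the paper expects.
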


\section{Topology of complex curves and their neighborhoods}\label{sec:neighborhood}
In this section we give a precise definition of the notion of a tubular neighborhood of a possibly singular curve in $\CP^2$. We describe the boundary of this neighborhood
in terms of the surgery on a link. We perform several helpful algebro-topological computations.

\subsection{`Tubular' neighborhood of a complex curve}\label{sub:tubular}
Let $C\subset\CP^2$ be a reduced complex curve of degree $d$. We do not insist that $C$ is irreducible. We write $C_1,\dots,C_e$ for the irreducible components of $C$ and let $d_1,\dots,d_e$ (resp. $g_1,\dots,g_e$)
denote their degrees (resp. genera). Hereafter by the \emph{genus} $g(C)$ of a complex curve we understand the genus of its normalization, that is, the geometric genus. From the topological perspective, the geometric genus of a singular curve is the sum of genera of connected components of the smooth locus of the curve, regarded as an open surface.

We denote by $z_1,\dots,z_u$ the singular points of $C$. For each such singular point $z_i$ we denote by $r_i$ the number of branches. We write $\cL_i$ for the link of singularity, whose components are $L_{i1},\dots,L_{ir_i}$.  We choose once
and for all pairwise disjoint closed balls $B_1,\dots,B_u$ with centers respectively $z_1,\dots,z_u$ and such that $C\cap \partial B_i$ is the link $\cL_i$ and $C\cap B_i$ is homeomorphic to a cone over $\cL_i$.

As the curve $C$ is not smoothly embedded at its singular points, the notion of a tubular neighborhood of $C$ requires some clarification. The following is 
an extension of the construction of \cite{BLmain}.

Take a tubular neighborhood $N_0$ in $\CP^2\setminus (B_1\cup\dots\cup B_u)$ of the smooth part
$C_0:=C\setminus(B_1\cup \cdots\cup B_u)$. We define $N$ to be the union of $N_0$ and $B_1,\dots,B_u$.
Define
\begin{equation}\label{eq:define_n}
  \rho=2g-e+1+\sum_{i=1}^u (r_i-1).
\end{equation}
We  now provide a surgery theoretical description of $N$ and its boundary $Y$. We define a 3-manifold $Z$ containing a link $\cL$, as follows. We begin with the disjoint union $\cL_0:=\cL_1\sqcup\dots\sqcup\cL_u$ in $Z_0:=S^3\sqcup\dots\sqcup S^3$. Next, we pick a collection of pairwise disjoint and properly embedded arcs $\lambda_1,\dots, \lambda_{\rho+u-1}$ on $C_0$ which form a basis of $H_1(C_0,\d C_0)$. Such a basis cuts $C_0$ into a union of disks. We let $Z=\#^\rho S^2\times S^1$ be the boundary of the 4-manifold $H$ obtained by attaching $\rho+u-1$ 4-dimensional 1-handles to $\d (B_1\cup \cdots \cup B_u)=Z_0$, each containing a 2-dimensional band (corresponding to a $\lambda_i$), which we attach to $\cL_0$. We let $\cL=L_{1}\cup \cdots \cup L_{e}\subset Z$ be the resulting link. We think of this operation as a generalization of the knotification and connected sum operations of knots. By construction, $\cL$ is an $e$-component link inside of the connected sum of $\rho$ copies of $S^1\times S^2$. Furthermore, each component of $\cL$ is null-homologous. 

We have the following (compare \cite[Theorem~3.1]{BHL} and \cite[Lemma~3.1]{BCG}):

\begin{proposition}
  The 3-manifold $Y=\partial N$ is the surgery on $\cL\subset Z$ with surgery coefficients $(d_1^2,\dots,d_e^2)$. The 4-manifold $N$
  is obtained by attaching $e$ 2-handles to the boundary connected sum of $\rho$ copies of $D^3\times S^1$.
\end{proposition}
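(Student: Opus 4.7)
The plan is to construct an explicit handle decomposition of $N$ matching the structure described in the proposition, from which the surgery description of $Y = \partial N$ is immediate.

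First I take $B_1,\dots,B_u$ as 0-handles. The complement $N_0 := N \setminus \bigcup_i \Int(B_i)$ is a tubular neighborhood of the smooth surface $C_0 = C \setminus \bigcup_i \Int(B_i)$, which has $e$ components of genera $g_1,\dots,g_e$ and boundary $\bigsqcup_i \cL_i$. By hypothesis the arcs $\lambda_1,\dots,\lambda_{\rho+u-1}$ form a basis of $H_1(C_0,\partial C_0)$ and cut $C_0$ into $e$ disks, so $C_0$ admits a handle decomposition relative to $\partial C_0$ with $\rho+u-1$ one-handles (regular neighborhoods of the $\lambda_i$) and $e$ two-handles.

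Next, since $C_0$ has non-empty boundary it retracts onto a graph, so its normal bundle in $\CP^2$ is trivial and $N_0 \cong C_0 \times D^2$. Crossing with $D^2$ lifts the handle decomposition of $C_0$ to one of $N_0$ having $\rho+u-1$ four-dimensional 1-handles, each carrying a 2-dimensional band along some $\lambda_i$, together with $e$ four-dimensional 2-handles. Re-attaching $N_0$ to the $B_i$'s, the 0- and 1-handle subhandlebody is exactly the manifold $H$ of the proposition, diffeomorphic to the boundary connected sum of $\rho$ copies of $D^3 \times S^1$ and with boundary $\#^\rho S^2 \times S^1 = Z$. The attaching curves in $Z$ of the remaining $e$ two-handles are by construction the components $L_1,\dots,L_e$ of $\cL$, obtained from $\cL_0$ by banding along the $\lambda_i$.

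Finally, I would identify the framings. The framing of the 2-handle attached along $L_j$ equals the self-intersection in $\CP^2$ of the closed surface obtained by capping off the 2-handle core with the cones on the branches of $\cL_i$ inside the $B_i$'s; this closed surface is $C_j$ itself. Since $C_j$ has degree $d_j$ and a line in $\CP^2$ has self-intersection $1$, this self-intersection equals $d_j^2$. Combining, $N$ is $H$ with $e$ two-handles attached along $\cL$ with framings $(d_1^2,\dots,d_e^2)$, and $Y=\partial N$ is the surgery on $\cL\subset Z$ with coefficients $(d_1^2,\dots,d_e^2)$. The main obstacle I expect is precisely this framing calculation: the normal trivialization $N_0 \cong C_0 \times D^2$ is defined only over the smooth part, and one must verify that gluing the cones in the $B_i$'s to the 2-handle core produces a surface representing $d_j\cdot[\text{line}]$ in $H_2(\CP^2)$. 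This is a local check at each singular point, using that each branch of $\cL_i$ bounds a locally flat disk (the cone) in $B_i$ which contributes the ``standard'' framing to the link component it bounds.
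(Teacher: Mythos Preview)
Your proof is correct and follows essentially the same approach as the paper's, which is a two-sentence sketch invoking exactly the two facts you use: that the arcs cut $C_0$ into disks (giving the handle structure), and that $C_j$ has self-intersection $d_j^2$ (giving the framings). Your added detail about trivializing the normal bundle over $C_0$ and your caution about the framing computation are reasonable elaborations; the framing step goes through because the capped-off surface is homologous in $N$ (hence in $\CP^2$) to $C_j$, and self-intersection in the codimension-zero submanifold $N\subset\CP^2$ agrees with self-intersection in $\CP^2$.
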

\begin{proof} The fact that $N$ is obtained by attaching $e$ 2-handles to $Z$ along $\cL$ follows from the fact that the arcs $\lambda_1,\dots, \lambda_{\rho+u-1}$ cut $C_0$ into a collection of disks. The claim about the framings follows immediately from the fact that $C_i$ has self-intersection $d_i^2$.
\end{proof}
\begin{remark}
  If $e=1$, $\cL$ is a knot. This knot can be obtained as a connected sum of $\widehat{\cL_1},\dots,\widehat{\cL_u}$
  and $g$ copies of the Borromean knot. Here
  the hat denotes knotification.
\end{remark}

\subsection{Algebraic topology}

In this section, we describe some basic algebro-topological facts about the tubular neighborhood $N$, and its boundary $Y$. Our description of $\Spin^c$ structures is similar to the one described in \cite[Section~11.1]{MOIntegerSurgery}.

 Let $\Lambda=\{d_i^2\}_{i=1}^e$ denote the integral framing on $\cL\subset Z$, described in the last section, and let $\Xi=\{d_i d_j\}_{i,j=1}^e$ denote the framing matrix. We let $W_{\Lambda}(\cL)$ denote the 2-handle cobordism from $Z$ to $Y$. Recall that $N$ is the union of a 1-handlebody, $H$, and $W_{\Lambda}(\cL)$. 

 There is a map
\begin{equation}
\cF\colon H^2(W_{\Lambda}(\cL))\to \Z^e \oplus H^2(Z), \label{eq:Spin^c-Phi}
\end{equation}
given by
\[
\cF(c)= (\langle c, [\hat{F}_1] \rangle, \dots, \langle c, [\hat{F}_e]\rangle, c|_Z).
\]

Here $\hat{F}_i$ is the surface obtained by capping a Seifert surface for $L_i$ in $Z$ with the core of the 2-handle. An easy argument
involving Mayer-Vietoris 
sequence on the handle attachement regions in $Z$ shows that $\cF$ is an isomorphism. 

Dually, we may view $W_{\Lambda}(\cL)$ as being obtained by attaching 2-handles to a link $\cL^*$ in $Y$. We consider the Mayer-Vietoris sequence obtained by viewing $W_{\Lambda}$ as the union of $[0,1]\times Y$ and $e$ 2-handles.
 A portion of this exact sequence reads
\[
H^1(\cL^*)\to H^2(W_{\Lambda}(Y))\to H^2(Y)\to 0.
\]
In particular, $H^2(Y)$ is the quotient of $H^2(W_{\Lambda}(Y))$ by the image of $H^1(\cL^*)$. Furthermore, from the definition of the coboundary map in the Mayer-Vietoris exact sequence, an element of $H^1(\cL^*)$ acts by the Poincar\'{e} duals of the cores of the 2-handles attached along $\cL$. Using the isomorphism $\cF$ from~\eqref{eq:Spin^c-Phi}, we thus obtain
\begin{equation}
H^2(Y)\cong (\Z^e/\im(\Xi)) \oplus H^2(Z). \label{eq:H^2-Y-iso}
\end{equation}

There are analogous descriptions for $\Spin^c$ structures on $Y$ and $W_{\Lambda}(\cL)$, as follows. Consider the map
\begin{equation}
\cT_W\colon \Spin^c(W_{\Lambda}(\cL))\hookrightarrow \Q^e\times  \Spin^c(Z),\label{eq:iso-Spin^c-W}
\end{equation} 
given by
\[
\cT_W(\frs)=\left(\frac{\langle c_1(\frs),[\hat{F}_1]\rangle-[\hat{F}]\cdot [\hat{F}_1]}{2},\dots, \frac{\langle c_1(\frs),[\hat{F}_e]\rangle-[\hat{F}]\cdot [\hat{F}_e]}{2}, \frs|_Z\right),
\]
where $[\hat{F}]$ is the sum of the $[\hat{F}_i]$.  Similar to the argument for cohomology, an easy application of Mayer-Vietoris shows that $\cT_W$ is an isomorphism onto its image. 
Since $c_1(\frs)$ is a characteristic vector,
$\langle c_1(\frs),[\hat{F}_i]\rangle -[\hat{F}_i]^2$ is even as well.
Using this, it is not hard to identify the image of $\cT_W$ as $\mathbb{H}(\cL)\times \Spin^c(Z)$, where $\mathbb{H}(\cL)$ is affine lattice in $\Q^e$ generated by tuples $(a_1,\dots, a_e)$ where
\[a_i-\frac12\lk(\cL_i,\cL\setminus \cL_i)\in \Z\textrm{ for all $i$}.\]

  A similar argument as for cohomology implies $\Spin^c(Y)$ is isomorphic to the quotient of $\Spin^c(W_{\Lambda}(\cL))$ by the action of the Poincar\'{e} duals of the cores of the 2-handles attached to $\cL$. This translates into the isomorphism
\begin{equation}
\cT_Y\colon \Spin^c(Y)\cong (\mathbb{H}(\cL)/\im (\Xi))\times  \Spin^c(Z).\label{eq:iso-spin^c-Y}
\end{equation}
With respect to the isomorphisms $\cF$ and $\cT_W$, the Chern class map takes a simple form:
\[
c_1(s_1,\dots, s_e, \frt)= (2s_1+[\hat{F}]\cdot [\hat{F}_1],\dots, 2s_e+[\hat{F}]\cdot [\hat{F}_e],c_1(\frt)).
\]

Since $Z=\#^{\rho} S^{2}\times S^{1}$ bounds the 1-handlebody $H\subset N$, we know that $\delta(H^1(Z))=\{0\}\subset H^2(N)$. Hence, a Mayer-Vietoris argument identifies $\Spin^c(N)$ with the set of $\Spin^c$ structures on $W_{\Lambda}(\cL)$ which extend over $H$, or equivalently the ones which have torsion restriction to $Z$. Hence,
\[
\Spin^c(N)\cong \bH(\cL).
\]

The following is helpful for understanding $H^2(Y)$:
\begin{lemma}
\label{lem:H_1-Y}
  Suppose $\Xi=\{a_{ij}\}_{i,j=1}^e$ is a matrix such that $a_{ij}=d_id_j$, for some non-zero integers $d_i$. Then $\Z^e/\im(\Xi)\cong \Z^{e-1}\oplus \Z/\theta^2$, where $\theta=\gcd(d_1,\dots, d_e)$. .
\end{lemma}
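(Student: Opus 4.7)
The plan is to exploit the rank-one factorization $\Xi = d\,d^{T}$, where $d=(d_1,\dots,d_e)^T\in\Z^e$. First I would describe the image explicitly: for any $v\in\Z^e$ one has $\Xi v = (d^Tv)\,d$, and as $v$ varies, the scalar $d^Tv = \sum_i d_i v_i$ ranges exactly over $\theta\Z$ by the definition of $\theta=\gcd(d_1,\dots,d_e)$. Hence
\[
\im(\Xi)\;=\;\theta\,\Z\cdot d\;\subset\;\Z^e.
\]

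Next, writing $d_i=\theta e_i$ with $\gcd(e_1,\dots,e_e)=1$, the image becomes the cyclic subgroup generated by $\theta^2 e$, where $e=(e_1,\dots,e_e)$ is a primitive vector of $\Z^e$. This is the key reduction: the quotient $\Z^e/\im(\Xi)$ is the quotient of $\Z^e$ by a cyclic group generated by $\theta^2$ times a primitive vector.

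The last step is standard. Since $e$ is primitive, a well-known lemma in integer linear algebra (a corollary of the Smith normal form, or equivalently of the fact that $\mathrm{SL}_e(\Z)$ acts transitively on primitive vectors) lets me extend $e$ to a $\Z$-basis $e,f_2,\dots,f_e$ of $\Z^e$. In this basis the generator $\theta^2 e$ of $\im(\Xi)$ is $(\theta^2,0,\dots,0)$, so
\[
\Z^e/\im(\Xi)\;\cong\;\Z/\theta^2\oplus\Z^{e-1},
\]
which is the desired statement. There is essentially no obstacle here; the only point requiring a little care is the extraction of $\theta$ using $\gcd(d_1,\dots,d_e)$ and the invocation of primitivity, but both are elementary.
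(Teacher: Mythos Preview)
Your proof is correct and follows essentially the same approach as the paper: both identify $\im(\Xi)$ as the cyclic subgroup generated by $\theta(d_1,\dots,d_e)^T=\theta^2 e$ with $e$ primitive, and then read off the quotient. The only cosmetic difference is that you finish by extending the primitive vector $e$ to a $\Z$-basis, whereas the paper invokes the structure theorem over a PID and computes the torsion subgroup directly; these are equivalent ways of saying the same thing.
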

\begin{proof}
Note that $\im(\Xi)$ is the span of $\theta(d_1,\dots, d_e)^T$. By module theory over a principal ideal domain, we have $\Z^e/\im(\Xi)\cong \Z^{e-1}\oplus \Tors(\Z^e/\im(\Xi))$. By definition, $\Tors(\Z^e/\im(\Xi))$ is generated by the set of vectors $v$ in $\Z^e$ such that $n[v]=m [\theta (d_1,\dots, d_e)^T]$ for some integers $n$ and $m$. Clearly, $\Tors(\Z^e/\im(\Xi))$ is generated by the vector $(d_1/\theta,\dots, d_e/\theta)^T$, which has order $\theta^2$. The proof is complete. 
\end{proof}

Combining Lemma~\ref{lem:H_1-Y} with equation~\eqref{eq:H^2-Y-iso}, we conclude that
\begin{equation}
b_1(Y)=e-1+b_1(Z)=e-1+\rho.\label{eq:b1(Y)}
\end{equation}

If $j\in 2\Z+1$, let $\ssc_j$ denote the $\Spin^c$ structure on $\CP^2$ which satisfies
\begin{equation}\label{eq:ssc}
\langle c_1(\ssc_j),E\rangle=j,
\end{equation}
where $E$ is a complex line. In terms of the isomorphism in~\eqref{eq:iso-spin^c-Y}, we have
\begin{equation}\label{eq:sscT}
\cT_Y(\ssc_j|_{Y})=\left(\frac{jd_1-d_1(d_1+\cdots +d_e)}{2},\dots,\frac{jd_e-d_e(d_1+\cdots+d_e)}{2},0\right).
\end{equation}

We now let $X$ denote the complement of the interior of $N$ in $\CP^2$.

\begin{lemma} \label{lem:homology-X}
\begin{enumerate}
\item[]
\item\label{claim:X:1} $X$ has trivial intersection form.
\item\label{claim:X:3} Suppose $\frs$ is a torsion $\Spin^c$ structure on $Y$. Then $\frs$ extends over $X$ if and only if it extends over $\CP^2$.
\end{enumerate}
\end{lemma}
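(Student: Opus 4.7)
The plan is to prove (1) via signature additivity and to prove (2) by tracking $\Spin^c$ structures through the identifications \eqref{eq:iso-Spin^c-W} and \eqref{eq:iso-spin^c-Y}.

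For (1), I would apply Novikov additivity to the decomposition $\CP^2 = N \cup_Y X$, yielding $\sigma(\CP^2) = \sigma(N) + \sigma(X)$. The intersection form on $H_2(N)\cong \Z^e$, in the basis $\{[\hat F_i]\}_{i=1}^e$, is given by the matrix $\Xi = (d_id_j) = vv^T$ with $v=(d_1,\dots,d_e)^T$, which is rank one and positive semi-definite; hence $\sigma(N)=1$, and since $\sigma(\CP^2)=1$ we obtain $\sigma(X)=0$. Next, for any $\alpha \in H_2(X;\Z)$, its self-intersection in $X$ equals its self-intersection in $\CP^2$ under the inclusion $i\colon X \hookrightarrow \CP^2$ (transverse perturbations can be realized in the interior of $X$), and this is nonnegative because the form on $\CP^2$ is positive-definite. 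Therefore the intersection form on $X$ is positive semi-definite, so $b_2^-(X)=0$, and combined with $\sigma(X)=0$ we conclude $b_2^+(X)=0$; the form is thus trivial on $H_2(X)/\Tors$.

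For (2), the forward implication is immediate by restriction. For the converse, suppose $\frs \in \Spin^c(Y)$ is torsion and extends to some $\frs_X \in \Spin^c(X)$. It suffices to show that $\frs$ also extends over $N$, since then the two extensions agree on $Y$ and glue to an extension over $\CP^2 = N \cup_Y X$. To establish extension over $N$, I would use the decomposition $N = H \cup_Z W_\Lambda(\cL)$, where $H$ is the $1$-handlebody with $H^2(H) = 0$. Every $\Spin^c$ structure on $N$ must restrict to $Z = \#^\rho S^2 \times S^1$ as the unique $\Spin^c$ structure $\frs_0$ with $c_1 = 0$, uniqueness holding because $H^2(Z) = \Z^\rho$ is torsion-free. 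Via \eqref{eq:iso-Spin^c-W}, this identifies $\Spin^c(N) \cong \bH(\cL) \times \{\frs_0\}$, and via \eqref{eq:iso-spin^c-Y}, its image in $\Spin^c(Y)$ is $\bH(\cL)/\im\Xi \times \{\frs_0\}$. The main technical point is that this image coincides exactly with the set of torsion $\Spin^c$ structures on $Y$: since $H^2(Z)$ is free, a $\Spin^c$ structure on $Y$ has torsion first Chern class if and only if its $\Spin^c(Z)$-component equals $\frs_0$. Hence $\frs$ extends over $N$, completing the proof.
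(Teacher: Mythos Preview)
Your argument is correct in substance and, for part~(2), follows essentially the same route as the paper. A few remarks:

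For part~(1), the paper takes a more direct path: since any closed surface in $X$ is disjoint from $C$, and $[C]$ generates $H_2(\CP^2;\Q)$, the inclusion-induced map $H_2(X)\to H_2(\CP^2)$ vanishes; hence every self-intersection in $X$ is already zero, with no appeal to Novikov additivity or signatures. Your signature argument is valid but roundabout --- in fact, the step where you compare self-intersections in $X$ and in $\CP^2$ already contains the key observation, and one could conclude $\alpha\cdot\alpha=0$ immediately from $i_*\alpha=0$ rather than passing through positive semi-definiteness plus $\sigma(X)=0$.

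For part~(2), your argument matches the paper's: a torsion $\frs$ on $Y$ forces the $\Spin^c(Z)$-component to be $\frs_0$, hence $\frs$ extends over $N$, and then one glues the extensions over $N$ and $X$. One small correction: your claim that the image of $\Spin^c(N)\to\Spin^c(Y)$ \emph{coincides exactly} with the set of torsion $\Spin^c$ structures is false when $e>1$, since $\bH(\cL)/\im\Xi\cong\Z^{e-1}\oplus\Z/\theta^2$ has a nontrivial free part by Lemma~\ref{lem:H_1-Y}. The correct statement is that the image \emph{contains} the torsion $\Spin^c$ structures (equivalently: $\frs$ extends over $N$ iff its $Z$-component is $\frs_0$, which is implied by, but not equivalent to, $\frs$ being torsion). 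Since your proof only uses this inclusion, the argument survives. Also, your labeling of ``forward'' and ``converse'' is reversed relative to how the iff is written in the statement, though this is purely cosmetic.
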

\begin{proof} 
  The proof follows identical arguments as in \cite[Sections 3 and 4]{BHL}, therefore we provide only a sketch.
  Claim~\eqref{claim:X:1} follows from the fact that the inclusion map $H_2(X)\to H_2(\CP^2)$ vanishes, since all elements of $H_2(X)$ are disjoint from $C$.


Claim~\eqref{claim:X:3} is proven as follows. A $\Spin^c$ structure on $Y$ always extends over $W_{\Lambda}(\cL)$. Furthermore, the isomorphisms in~\eqref{eq:iso-Spin^c-W} and~\eqref{eq:iso-spin^c-Y} are clearly compatible with the natural restriction maps from $\Spin^c(W_{\Lambda}(\cL))$ to $\Spin^c(Y)$ and $\Spin^c(Z)$. A $\Spin^c$ structure on $W_{\Lambda}(\cL)$ extends over $N$ if and only if it restricts to the torsion  $\Spin^c$ structure on $Z$. Hence, a $\Spin^c$ structure on $Y$ extends over $N$ if and only if the $\Spin^c$ factor on $\Spin^c (Z)$ in~\eqref{eq:iso-spin^c-Y} is torsion. In particular, any torsion $\Spin^c$ structure on $Y$ extends over $N$. Since a $\Spin^c$ structure on $Y$ extends over $\CP^2$ if and only if it extends over both $X$ and $N$, the claim follows.
\end{proof}

\subsection{$d$-invariant inequalities for the neigborhood of $C$}

We are now in position to prove an inequality for the $d$-invariants of boundaries of neighborhoods of complex curves in $\CP^2$ as in Subsection~\ref{sub:tubular}.
With the notation from that subsection
we have the following result.

\begin{proposition}\label{prop:main_estimate}
  For any $\Spin^c$ structure $\sss$ on $Y$ that extends over $X$ and whose first Chern class is torsion, we have:
  \[
    d_{\bot}(Y,\sss)\ge -\frac{\rho+e-1}{2},\quad  d_{\top}(Y,\sss)\le \frac{\rho+e-1}{2}.
  \]
\end{proposition}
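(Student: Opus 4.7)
The plan is to apply the Levine--Ruberman $d$-invariant inequality (Theorem~\ref{thm:estimate}) to the four-manifold $X=\CP^2\setminus\mathrm{int}(N)$ for the lower bound on $d_\bot$, and to its orientation-reversal $-X$ bounding $-Y$ for the upper bound on $d_\top$. By Lemma~\ref{lem:homology-X}, $X$ has trivial intersection form, so $b_2^+(X)=b_2^-(X)=0$. To invoke the inequality one also needs $c_1^2(\sst)=0$ for any extension $\sst\in\Spin^c(X)$ of $\sss$; this follows because the torsion hypothesis on $c_1(\sss)$ forces $c_1(\sst)\in H^2(X;\Q)$ to lift through $H^2(X,Y;\Q)\cong H_2(X;\Q)$ via Poincar\'e--Lefschetz duality, whereupon its self-intersection is computed by the (trivial) intersection form on $H_2(X;\Q)$.

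For the lower bound on $d_\bot(Y,\sss)$, I would apply Theorem~\ref{thm:estimate} with $V=H_1(Y)/\Tors$, which automatically contains the required kernel. Substituting $b_2^-(X)=0$, $c_1^2(\sst)=0$, the identification $b_1(Y)=\rho+e-1$ from~\eqref{eq:b1(Y)}, and $\rk V=b_1(Y)$, the inequality collapses to
\[
d_\bot(Y,\sss)\;\geq\;\tfrac{1}{2}b_1(Y)-b_1(Y)\;=\;-\tfrac{\rho+e-1}{2}.
\]

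For the upper bound on $d_\top(Y,\sss)$, I would invoke the orientation-reversal duality $d_\top(Y,\sss)=-d_\bot(-Y,\sss)$, a standard feature of the Levine--Ruberman invariants analogous to $d(-Y,\sss)=-d(Y,\sss)$ in the rational homology sphere case. Since $-X$ also has trivial intersection form and provides an extension of $\sss$, the lower-bound argument applied to the pair $(-Y,-X)$ yields $d_\bot(-Y,\sss)\geq -\tfrac{\rho+e-1}{2}$, which rearranges to the desired bound $d_\top(Y,\sss)\leq \tfrac{\rho+e-1}{2}$.

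The main step requiring care is the vanishing of $c_1^2(\sst)$, which crucially relies on the torsion hypothesis on $c_1(\sss)$; without it, $c_1(\sst)$ could pick up a nonzero contribution from classes in $H_2(X,\partial X)$ not visible in $H_2(X)$. Everything else is direct substitution into Theorem~\ref{thm:estimate} together with the standard Levine--Ruberman duality.
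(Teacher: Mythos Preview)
Your proposal is correct and follows essentially the same approach as the paper: apply Theorem~\ref{thm:estimate} to $X$ with $V=H_1(Y)/\Tors$ for the $d_\bot$ bound, then reverse orientations and use the Levine--Ruberman duality $d_\bot(-Y,\sss)=-d_\top(Y,\sss)$ (which the paper attributes to \cite[Proposition~4.2]{LevineRuberman}) for the $d_\top$ bound. Your explicit justification that $c_1^2(\sst)=0$ via Poincar\'e--Lefschetz duality and the trivial intersection form is a point the paper simply asserts, so your write-up is actually a bit more thorough here.
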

\begin{proof}    By equation~\eqref{eq:b1(Y)}, we know that $b_1(Y)=\rho+e-1.$
The intersection form on $X$ is trivial by Lemma~\ref{lem:homology-X}. From  Theorem~\ref{thm:estimate}, we obtain 
\[
d_{\bot}(Y,\frs)=d(Y,\frs,H_1(Y)/\Tors)\ge  -\frac{\rho+e-1}{2},
\]  
 since the terms involving $c_1^2$ and $b_2^-(X)$ vanish.
 
 Since the intersection form on $X$ vanishes, we may reverse the orientation of $X$ and $Y$ and apply to the same argument to get that
 \begin{equation}
d_{\bot}(-Y,\frs)=d(-Y,\frs,H_1(Y)/\Tors)\ge  -\frac{\rho+e-1}{2}.\label{eq:d-bot--Y-bound}
 \end{equation}
 It follows from \cite[Proposition~4.2]{LevineRuberman} and the fact that $d^\ast(Y,\frs,H_1(Y)/\Tors)=d_{\textrm{top}}(Y, \frs)$ (see \cite[pg. 6]{LevineRuberman})
 \[
d_{\bot}(-Y,\frs)=-d_{\top}(Y,\frs).
 \]
 Combining this with equation~\eqref{eq:d-bot--Y-bound}, we conclude that
 \[
d_{\top}(Y,\frs)\le \frac{\rho+e-1}{2}, 
 \]
 completing the proof.
 \end{proof}

%
%
%
%
%

%

\section{Building blocks for computing the $d$-invariants of $Y$}\label{sec:block}
In order to effectively use Proposition~\ref{prop:main_estimate}, we need algorithms to compute the $d$-invariants of $Y$.
Our main tool will be the large surgery formula. To use it, we need to understand the Floer chain complexes of various links that are related
to plane curve singularities.

\subsection{The staircase complexes for L-space knots}\label{sub:lspace}

A knot $K\subset S^3$ is called an \emph{L-space knot} if there is a positive integer $q$ such that $S^3_q(K)$ is an L-space, i.e. $\HF^-(S^3_q(K),\frs)\cong\F[U]$ for each $\frs\in \Spin^c(S^3_q(K))$.
All algebraic knots are L-space knots; see \cite[Theorem~1.10]{Heddencabling}. 

There is a simple description of  Floer chain complexes of L-space knots, due to Ozsv\'{a}th and Szab\'{o} \cite[Theorem~1.2]{OSlens}. (Note that therein, only $\widehat{\HFK}(K)$ is described, but the algorithm actually produces a description of $\CFK^\infty(K)$.) We describe their algorithm presently. Let $K$ be an L-space knot. Ozsv\'{a}th and Szab\'{o} prove that the Alexander polynomial of $K$, which we denote $\Delta_K(t)$ has the following form:
\begin{equation}
\label{eq:deltak}\Delta_K(t)=t^{\a_0}-t^{\a_1}+\dots+t^{\a_{2r}},
\end{equation}
where $0=\a_0<\a_1<\dots<\a_{2r}$. Define the gap function
\[
\beta_i:=\a_i-\a_{i-1},
\]
for $1\le i\le 2r$.

We now describe the complex $\cCFK^-(K)$ over the ring $\scR$. The complex $\cCFK^-(K)$ is freely generated over $\scR$ by elements
\[
\xs_0,\ys_1,\xs_2,\cdots , \ys_{2r-1},\xs_{2r}.
\]
The differential takes the following form
\begin{equation}\label{eq:diff_staircase}
\d(\xs_{2i})=0\quad \text{and} \quad \d(\ys_{2i+1})=\scU^{\beta_{2i+1}}\xs_{2i}+\scV^{\beta_{2i+2}} \xs_{2i+2}.
\end{equation}
The $(\gr_w,\gr_z)$-bigradings are determined by the normalization that $\gr_w(\xs_0)=0$ and $\gr_z(\xs_{2r})=0$. Recall that the variable $\scU$ has bigrading $(-2, 0)$, and the variable $\scV$ has bigrading $(0, -2)$.

The gradings can be expressed in the following way.
Write 
\[\Delta_K=1+(t-1)(t^{g_1}+\dots+t^{g_s})\]
for some positive integers $g_1<\dots<g_s$. Define $S_K=\Z_{\ge 0}\setminus\{g_1,\dots,g_s\}$, and
\begin{equation}\label{eq:R_function}
  R_K(k)=\#S_K\cap[0,k), \textrm{ if }k\in\Z.
\end{equation}
With this notation, the gradings of the $x_{2i}$ generator are $\gr_w(x_{2i})=-2R(\alpha_{2i})$ and $\gr_z(x_{2i})=2R(\alpha_{2i})-2g_3(K)$; compare
\cite[Section 4]{BLmain}.
\begin{remark}
  If $K$ is an algebraic knot, the set $S_K$ turns out to be a semigroup (note that if $K$ is only an L-space knot, $S_K$ need not be a 
  semigroup). In fact, this is the semigroup of that singular point.
  The function $R_K$ is the \emph{semigroup counting function}.
  Refer to \cite[Section 4]{wall-book} for  details on semigroups. 
\end{remark}

The following corollary is a compilation of \cite[Proposition 5.6 and Lemma 6.2]{BLmain}.
\begin{corollary}\label{cor:R_and_V}
  The $V_s$-invariants of an L--space knot satisfy that  $V_{-g+j}(K)=R_K(j)-j+g$.
\end{corollary}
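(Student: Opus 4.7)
The plan is to compute $V_{-g+j}(K)$ directly from the staircase model of $\cCFK^-(K)$ in Subsection~\ref{sub:lspace}. By Definition~\ref{def:V1}, $V_{-g+j}(K)=-\tfrac12 d(\scA_{-g+j})$, where $\scA_{-g+j}\subseteq \cCFK^-(K)$ is the subcomplex of elements with $\gr_w-\gr_z=-2g+2j$, viewed as an $\bF[\scU\scV]$-complex. I would first observe that each staircase generator $x_{2i}$ is a cycle and that in $(\scU,\scV)^{-1}\cCFK^-(K)\cong\scRi$ the classes $[x_{2i}]$ all agree up to multiplication by monomials in $\scU$ and $\scV$. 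Hence the rank-one $\bF[\scU\scV]$-non-torsion piece of $H_*(\scA_{-g+j})$ is generated by $\scU^{a_i}\scV^{b_i}x_{2i}$ for the optimal index $i$ and the minimal choice of $a_i,b_i\ge 0$ compatible with the bigrading constraint, so that $d(\scA_{-g+j})=\max_i\gr_w(\scU^{a_i}\scV^{b_i}x_{2i})$.

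Next, using $\gr_w(x_{2i})=-2R_K(\alpha_{2i})$ from~\eqref{eq:diff_staircase} together with the identity $\tfrac12(\gr_w(x_{2i})-\gr_z(x_{2i}))=g-\alpha_{2i}$ (forced by iterating the staircase relations in~\eqref{eq:diff_staircase}), the bigrading constraint becomes $a_i-b_i=2g-\alpha_{2i}-j$. Optimizing $a_i\ge 0$ subject to $b_i\ge 0$ yields
\[
\gr_w(\scU^{a_i}\scV^{b_i}x_{2i})=\begin{cases}2\alpha_{2i}-2R_K(\alpha_{2i})+2j-4g&\text{if }\alpha_{2i}\le 2g-j,\\ -2R_K(\alpha_{2i})&\text{if }\alpha_{2i}>2g-j.\end{cases}
\]
The first expression is strictly increasing in $i$, with successive differences equal to $2\beta_{2i+2}>0$, while the second is strictly decreasing, with differences $-2\beta_{2i+1}<0$. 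Consequently the overall maximum is realized at the transition where $\alpha_{2i}$ crosses $2g-j$ and is given by one of the two candidates.

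A short case analysis on the position of $2g-j$ within the interval $[\alpha_{2i^*},\alpha_{2i^*+2}]$ shows that this maximum equals $-2R_K(j)+2j-2g$. The key identities are $R_K(j)+\alpha_{2i^*}-R_K(\alpha_{2i^*})=g$ when $\alpha_{2i^*}$ is the largest $\alpha_{2k}\le 2g-j$ and case~1 wins, and $R_K(\alpha_{2i^*+2})=R_K(j)-j+g$ when $\alpha_{2i^*+2}$ is the smallest $\alpha_{2k}>2g-j$ and case~2 wins. Both are consequences of the symmetry $\alpha_i+\alpha_{2r-i}=2g$ of the Alexander polynomial together with the partition of $[0,2g]$ into semigroup values and gaps of $S_K$. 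The main obstacle is precisely this combinatorial step, which is essentially the content of~\cite[Proposition~5.6 and Lemma~6.2]{BLmain}; my plan is to cite those results for the final simplification rather than rederive them. Dividing by $-2$ then yields $V_{-g+j}(K)=R_K(j)-j+g$, as claimed.
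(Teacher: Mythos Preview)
Your proposal is correct, and its approach coincides with the paper's: the paper does not give an independent argument for Corollary~\ref{cor:R_and_V} but simply records it as a compilation of \cite[Proposition~5.6 and Lemma~6.2]{BLmain}, and you likewise defer the final combinatorial identity to that same reference. The only difference is that you make the intermediate optimisation over the staircase generators explicit before citing \cite{BLmain}; this is correct and is essentially the content of Proposition~\ref{prop:vj_pos} specialised to a single staircase. Two small remarks: the grading formula $\gr_w(x_{2i})=-2R_K(\alpha_{2i})$ appears in the text immediately after \eqref{eq:diff_staircase}, not in that equation itself; and the case analysis you defer to \cite{BLmain} can in fact be finished directly from the description of $S_K$ given in Subsection~\ref{sub:lspace}, since the gaps of $S_K$ are exactly $\bigcup_{k}[\alpha_{2k+1},\alpha_{2k+2})$, which makes $R_K$ piecewise linear with breakpoints at the $\alpha_i$ and renders both of your ``key identities'' immediate.
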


The K\"unneth formula for the knot Floer chain complex allows us to compute the $V_j$-invariants of a connected sum of L-space knots. The
following result is given in \cite[Formula (6.3)]{BLmain}.
\begin{proposition}\label{prop:vj_sum} 
  Let $K_1,\dots,K_n$ be L--space knots. Set $K=K_1\#\dots\#K_n$ and let $g=g_3(K)$. Then:
  \[V_j(K)+j=R_K(g+j),\]
  where $R_K=R_{K_1}\diamond\dots\diamond R_{K_n}$ is the infimal convolution of $R_{K_1},\dots,R_{K_n}$. 
\end{proposition}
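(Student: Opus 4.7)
The plan is to reduce to the K\"unneth theorem for $\cCFK^-$ and then exploit the explicit staircase structure of L-space knots. By the K\"unneth formula for the knot Floer complex and the fact that local equivalence preserves $V_s$-invariants (Proposition~\ref{prop:local_preserves}), we have
\[
\cCFK^-(K)\simeq \cCFK^-(K_1)\otimes_{\scR}\cdots\otimes_{\scR}\cCFK^-(K_n),
\]
so it suffices to compute $V_j$ of the right-hand side. Each factor is a staircase complex \eqref{eq:diff_staircase}, with corner generators $x_{2k}^{(i)}$ representing the $U$-non-torsion homology class, and with $\gr_w(x_{2k}^{(i)})$, $\gr_z(x_{2k}^{(i)})$ determined by $R_{K_i}$ as recalled in Section~\ref{sub:lspace}.

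The core step will be to prove the connected sum formula
\begin{equation}\label{eq:VsConnectedSumProposal}
V_j(K)=\min_{s_1+\cdots+s_n=j}\bigl(V_{s_1}(K_1)+\cdots+V_{s_n}(K_n)\bigr),
\end{equation}
which specializes to the desired identity once we apply Corollary~\ref{cor:R_and_V}: substituting $V_{s_i}(K_i)=R_{K_i}(g_i+s_i)-s_i$ and changing variables $k_i=g_i+s_i$ (so that $\sum k_i=g+j$), the right-hand side of \eqref{eq:VsConnectedSumProposal} becomes $R_K(g+j)-j$, where $R_K=R_{K_1}\diamond\cdots\diamond R_{K_n}$.

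For the inequality $V_j(K)\le\min(\cdots)$ in \eqref{eq:VsConnectedSumProposal}, given any decomposition $j=\sum s_i$, I would pick for each $i$ a cycle $\xi_i=\scU^{a_i}\scV^{b_i}x_{2k_i}^{(i)}\in\scA_{s_i}(K_i)$ with maximal $\gr_w$-grading, so $\gr_w(\xi_i)=-2V_{s_i}(K_i)$. The tensor product $\xi_1\otimes\cdots\otimes\xi_n$ is then a cycle in $\scA_j(K)$ whose image in $H_*(\scA_j)$ is $U$-non-torsion (because each factor is), and hence $d(\scA_j(K))\ge -2\sum V_{s_i}(K_i)$.

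The opposite inequality is the main obstacle. Here I would exploit the fact that the tensor product of staircases is itself a ``multi-variable staircase'': its cycles are generated, modulo boundaries in $\scA_j$, by monomials of the form $\scU^{a}\scV^{b}\bigl(x_{2k_1}^{(1)}\otimes\cdots\otimes x_{2k_n}^{(n)}\bigr)$ for various tuples $(k_1,\dots,k_n)$, since iterated applications of the staircase differentials let one absorb any $y$-generators into boundary terms up to powers of $\scU,\scV$. For each such tuple, placing the monomial into $\scA_j$ pins down $a-b$ in terms of the $x_{2k_i}^{(i)}$-gradings and $j$, and the minimum total number of $\scU$'s needed over $(k_1,\dots,k_n,a,b)$ is precisely $\min_{\sum s_i=j}\sum V_{s_i}(K_i)$. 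This gives $d(\scA_j(K))\le -2\min(\cdots)$. Verifying that every $U$-non-torsion class is representable in this normalized form is exactly where the L-space hypothesis and the specific differential \eqref{eq:diff_staircase} enter; this is the step whose careful justification would occupy the bulk of the proof and is indeed the content of \cite[Formula (6.3)]{BLmain}, to which we may appeal.
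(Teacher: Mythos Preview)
Your proposal is correct and follows essentially the same route as the cited source: the paper does not give its own proof of this proposition but simply refers to \cite[Formula~(6.3)]{BLmain}, and your sketch --- K\"unneth, the connected-sum formula $V_j(K)=\min_{\sum s_i=j}\sum V_{s_i}(K_i)$, then Corollary~\ref{cor:R_and_V} and the change of variables $k_i=g_i+s_i$ --- is precisely the argument of that reference.

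One remark worth making: the paper later proves Proposition~\ref{prop:vj_pos}, which gives a self-contained route to the same result and in fact cleanly handles the step you flag as delicate. There, Lemma~\ref{lem:basic-staircase-facts} shows that a homogeneous element of a positive multi-staircase is an $\scR$-non-torsion cycle if and only if it involves an odd number of generators of $\cG(P_0)$; this immediately gives the lower bound $V_j(K)\ge\min(\cdots)$ without needing to ``normalize'' cycles modulo boundaries. Your argument for $\ge$ is morally the same, but phrasing it via Proposition~\ref{prop:vj_pos} (whose proof is three lines) avoids the hand-waving about absorbing $y$-generators. The proof of Corollary~\ref{cor:cor_step} in the paper then carries out exactly your min-over-decompositions computation in the special case where the second factor is $\cS^n$.
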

We recall that if $I,J\colon\Z\to\Z$ are two functions bounded from above, their \emph{infimal convolution} is given by $I\diamond J(m)=\min_{i+j=m} I(i)+J(j)$.

A prominent role in the present paper is played by the following complexes.
\begin{definition}\label{def:super_basic} Suppose $n\ge 1$. We write $\cS^n$ for the staircase complex built from the Alexander polynomial $\Delta(t)=1-t+\cdots +t^{2n}$. Explicitly, it is generated by elements $\xs_0,\ys_1,\dots, \ys_{2n-1}, \xs_{2n}$ with differential $\d(\xs_{2i})=0$ and
\[
\d(\ys_{2i+1})=\scU \xs_{2i}+\scV \xs_{2i+2}.
\]
The bigradings are given by $(\gr_w(\xs_{j}),\gr_{z}(\xs_j))=(-j,j-2n)$, if $j$ is even. The same formula holds for $\ys_j$, if $j$ is odd.

The complex $\cS^{-n}$ is defined as 
the dual complex to $\cS^n$. More specifically, it is generated by elements $\uxs_0,\uys_1,\dots,\uys_{2n-1},\uxs_{2n}$
with differential $\d(\uys_{2i+1})=0$, $\d(\uxs_{2i})=\scV\uys_{2i-1}+\scU\uys_{2i+1}$, and the convention that $\uys_{-1}=\uys_{2n+1}=0$.
For $j$ even, the grading of $\uxs_{j}$ is  $(j,2n-j)$, and an analogous formula holds for the grading of $\uys_{j}$ if $j$ is odd.
\end{definition}
\begin{remark}
  The complex $\cS^n$ is the $\cCFK^-$ complex of the positive torus knot $T(2,2n+1)$, while $\cS^{-n}$ is the complex for the negative torus knot $-T(2,2n+1)$.
\end{remark}
The following result is proved e.g. in \cite[Theorem B.1]{HeddenKimLivingston}. (Note that  $\nu^+$-equivalence is equivalent to local equivalence; cf. \cite[Proposition~3.11]{HomSurvey}.)
\begin{proposition}\label{prop:local_equivalence}
    The tensor product $\cS^k\otimes\cS^\ell$ is locally equivalent to $\cS^{k+\ell}$.
\end{proposition}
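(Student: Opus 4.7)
The plan is to split into cases according to the signs of $k$ and $\ell$, using duality and a direct cancellation identity. As a preliminary, $\cS^{-n}$ is the $\scR$-linear dual of $\cS^n$ (up to a grading shift), taking $\scR$-duals preserves local equivalence, and $(C \otimes D)^* \cong C^* \otimes D^*$; hence the case $k, \ell \leq 0$ follows from the case $k, \ell \geq 0$. The case where one of $k, \ell$ is zero is immediate since $\cS^0$ is the trivial complex consisting of a single generator in bigrading $(0,0)$ and $C \otimes \cS^0 \cong C$. The two remaining cases are (a) $k, \ell > 0$ and (b) $k > 0$, $\ell = -m < 0$.

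For case (a), I would construct explicit grading-preserving, $\scR$-linear chain maps $f \colon \cS^{k+\ell} \to \cS^k \otimes \cS^\ell$ and $g \colon \cS^k \otimes \cS^\ell \to \cS^{k+\ell}$ and verify that $f$ and $g$ both induce the identity on the localization $(\scU, \scV)^{-1} \cdot (-)$. The tensor product carries a natural ``L-shaped'' subcomplex supported on the left edge $\{\xs_0 \otimes \xs_{2j}, \xs_0 \otimes \ys_{2j+1}\}$ and the top edge $\{\xs_{2i} \otimes \xs_{2\ell}, \ys_{2i+1} \otimes \xs_{2\ell}\}$ of the $(2k+1) \times (2\ell+1)$ grid, which using the grading formulas of Definition~\ref{def:super_basic} is readily seen to be isomorphic to $\cS^{k+\ell}$; this defines $f$. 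The backward map $g$ is more subtle because a naive projection onto the L-shape is not a chain map: the square generators $\ys_{2i+1} \otimes \ys_{2j+1}$ contribute stray boundary terms. I would define $g$ by augmenting the naive projection with correction terms handling these contributions. Once $f$ and $g$ are defined, the local-equivalence condition is easy to check since both $(\scU, \scV)^{-1} \cdot (\cS^k \otimes \cS^\ell)$ and $(\scU, \scV)^{-1} \cdot \cS^{k+\ell}$ are free $\scRi$-modules of rank one.

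Case (b) reduces via associativity of $\otimes$ and case (a) to the single cancellation identity $\cS^m \otimes \cS^{-m} \sim \cS^0$: if $k \geq m$, then
\[
\cS^k \otimes \cS^{-m} \;\sim\; (\cS^{k-m} \otimes \cS^m) \otimes \cS^{-m} \;\sim\; \cS^{k-m} \otimes (\cS^m \otimes \cS^{-m}) \;\sim\; \cS^{k-m} \otimes \cS^0 \;\sim\; \cS^{k-m},
\]
and the case $k < m$ follows by dualizing. To establish the cancellation identity, I would define $f \colon \cS^0 \to \cS^m \otimes \cS^{-m}$ by sending the generator to the explicit element
\[
z \;=\; \sum_{i=0}^{m} \xs_{2i} \otimes \uxs_{2i} \;+\; \sum_{i=0}^{m-1} \ys_{2i+1} \otimes \uys_{2i+1},
\]
which a direct computation using~\eqref{eq:diff_staircase} shows is a cycle in bigrading $(0, 0)$ (in characteristic two the cross terms pair up and cancel). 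The backward map $g \colon \cS^m \otimes \cS^{-m} \to \cS^0$ is constructed by a dual recipe, and the local-equivalence condition amounts to showing that $z$ maps to a generator under localization, which is again manifest from the bigrading.

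The main obstacle will be the construction of the backward chain maps $g$ in both cases (a) and (b) and the verification that they induce the identity on localization. In case (a), handling the stray differentials from the square generators requires either enlarging the subcomplex beyond the L-shape or introducing explicit chain-homotopy correction terms; in case (b), one must carefully track which generators become $\scR$-torsion after localization so that the dual projection is well-defined and induces the correct isomorphism. If the direct approach becomes too cumbersome, a cleaner alternative is to invoke known additivity formulas for the local equivalence class of L-space-knot Floer complexes under connected sum, supplemented by a duality argument for the negative complexes.
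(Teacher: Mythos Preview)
The paper does not actually prove this proposition; it simply cites \cite[Theorem~B.1]{HeddenKimLivingston} (together with the remark that $\nu^+$-equivalence coincides with local equivalence, \cite[Proposition~3.11]{HomSurvey}). Your fallback suggestion at the end---invoking known additivity results for L-space knot complexes under connected sum---is therefore exactly the route the paper takes.

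Your direct approach is a genuine alternative and is sound in outline. A few remarks on the points you flag as obstacles. In case~(b), the backward map is essentially free: since $\cS^{-m}$ is the $\scR$-dual of $\cS^m$, the evaluation pairing $\cS^m\otimes(\cS^m)^*\to\scR=\cS^0$ is a grading-preserving chain map, and your cycle $z$ is precisely the coevaluation; the composite $\scR\xrightarrow{z}\cS^m\otimes\cS^{-m}\xrightarrow{\mathrm{ev}}\scR$ sends $1\mapsto 2m+1=1$ in characteristic two, so both maps are isomorphisms on localization. In case~(a), the backward map $g\colon\cS^k\otimes\cS^\ell\to\cS^{k+\ell}$ can be written down explicitly by symmetry: send each $\xs_{2i}\otimes\xs_{2j}$ to the generator of $\cS^{k+\ell}$ in the same bigrading (namely $\xs_{2(i+j)}$), send $\xs_{2i}\otimes\ys_{2j+1}$ and $\ys_{2i+1}\otimes\xs_{2j}$ to $\ys_{2(i+j)+1}$, and send $\ys_{2i+1}\otimes\ys_{2j+1}$ to $0$. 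One checks directly that the four boundary terms of each square generator pair off under $g$, so $g$ is a chain map; it restricts to the identity on the L-shaped subcomplex, hence is an isomorphism on localization. Thus the ``correction terms'' you anticipate needing are unnecessary: the natural symmetric extension already commutes with $\d$.

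In short: your plan works and gives a self-contained argument, whereas the paper outsources the result. The paper's route is shorter; yours has the virtue of being elementary and making the maps explicit.
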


\subsection{The Hopf link}\label{sub:hopf_link}
Our next goal is to describe the $\cCFL^-$ complexes for the $T(2,2n)$ torus links, their mirrors and their knotifications. As the calculations
are rather involved, we begin with describing the Floer chain complex for the Hopf link $H$, leaving
the general case to Subsection~\ref{sub:torus_link}. While the complex $\cCFL^-(H)$ is well known (it can be computed explicitly using a very simple diagram), 
to the best of our knowledge, the calculation of the action of $H_1(S^2\times S^1)$
on the knot Floer chain complex of the knotification of $H$ is new.

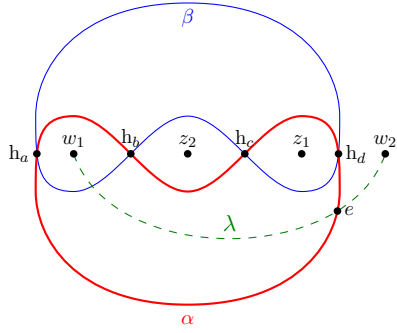
\begin{figure}
  \begin{tikzpicture}
    \draw[name path=beta,thin,blue] (0,2.5) .. controls ++ (1.5,0) and ++(0,0.6) .. (2,1) .. controls  ++ (0,-0.6) and ++(0.5,0) .. (1.5,0) .. controls ++(-0.5,0) and ++(0.5,0) .. (0,1) .. controls ++(-0.5,0) and ++(0.5,0) .. (-1.5,0) .. controls ++(-0.5,0) and ++(0,-0.6) .. (-2,1) .. controls ++(0,0.6) and ++(-1.5,0) .. (0,2.5);
    \draw[name path=alpha,thick,red] (0,-1.5) .. controls ++ (1.5,0) and ++(0,-0.6) .. (2,0) .. controls  ++ (0,0.6) and ++(0.5,0) .. (1.5,1) .. controls ++(-0.5,0) and ++(0.5,0) .. (0,0) .. controls ++(-0.5,0) and ++(0.5,0) .. (-1.5,1) .. controls ++(-0.5,0) and ++(0,0.6) .. (-2,0) .. controls ++(0,-0.6) and ++(-1.5,0) .. (0,-1.5);
    \draw[red] (0,-1.7) node[scale=0.7] {$\alpha$};
    \draw[blue] (0,2.3) node[scale=0.7] {$\beta$};
\path [name intersections={of=beta and alpha, by={d,c,b,a}}];
\fill (a) circle (0.05) node [left,scale=0.7] {$\ha$};
\fill (b) circle (0.05) node [above,scale=0.7] {$\hb$};
\fill (c) circle (0.05) node [above,scale=0.7] {$\hc$};
\fill (d) circle (0.05) node [right,scale=0.7] {$\hd$};
\draw[dashed,green!50!black, name path=gamma] (-1.5,0.5) .. controls ++(0.5,-1.5) and ++(-0.5,-1.5) .. node[midway, above, scale=0.8] {$\lambda$} (2.6,0.5);
\path [name intersections={of=alpha and gamma, by={e}}];
\fill (e) circle (0.05) node [right, scale=0.7] {$e$};
\fill (-1.5,0.5) circle (0.05) node [above,scale=0.7] {$w_1$};
\fill (0,0.5) circle (0.05) node [above,scale=0.7] {$z_2$};
\fill (1.5,0.5) circle (0.05) node [above,scale=0.7] {$z_1$};
\fill (2.6,0.5) circle (0.05) node [above,scale=0.7] {$w_2$};
  \end{tikzpicture}
  \caption{A genus $0$ Heegaard diagram for the Hopf link. The thick (red) curve is the $\alpha$-curve, the thin (blue) curve is the $\beta$-curve.
  The dotted curve is used to compute the action of $H_1(S^2\times S^1;\Z)$ on the knotification of the Hopf link.}\label{fig:hopf}
\end{figure}

 As our main focus will be eventually the knotification of $H$, we restrict our attention to the link Floer
 complex over the ring $\scR$, as opposed to the version with a variable for each basepoint. 

 Consider the diagram for the Hopf link, as in Figure~\ref{fig:hopf}.
 The complex $\cCFL^-(H)$ is generated over $\scR$ by four elements, $\ha$, $\hb$, $\hc$ and $\hd$, which correspond to the intersections of the $\alpha$ and $\beta$ curves in Figure~\ref{fig:hopf}. The gradings are as follows:
 \begin{equation}
 \begin{split}
 (\gr_{w}(\ha), \gr_{z}(\ha))&=\left(\frac{1}{2},-\frac{3}{2}\right)\\
(\gr_{w}(\hc),\gr_{z}(\hc))&=\left(-\frac{3}{2}, \frac{1}{2}\right)
 \end{split}
 \qquad
 \begin{split}
 (\gr_{w}(\hb), \gr_{z}(\hb))&=\left(-\frac{1}{2},-\frac{1}{2}\right)\\
(\gr_{w}(\hd),\gr_{z}(\hd))&=\left(\frac{1}{2}, -\frac{1}{2}\right)
 \end{split}
 \end{equation}

The differential in the complex is computed by counting holomorphic disks of Maslov grading zero.  Counting bigons shows that
\begin{equation}
\partial \ha=\partial \hc=0,\ \partial \hb=\partial \hd=\scU\ha+\scV  \hc.\label{eq:hopf_differential}
\end{equation}
The homology of $\cCFL^\infty(H)$ is a direct sum of two copies of $\scRi$. 
One copy is spanned by $[\hb+\hd]$, the other copy is spanned by $[\scU\ha] =[\scV\hc]$.

We now describe the homology action $A_\g$ on $\cCFK^-(\widehat{H})$, where $\widehat{H}$ denotes the knotification of $H$, and $\g$ is a generator of $H_1(S^2\times S^1)$. We will use Proposition~\ref{prop:knotif_Floer}. The formula therein involves the relative homology action $A_{\lambda}$ on $\cCFL^-(H)$, which we compute presently. In our present case, the arc $\lambda$ has only one intersection with an alpha curve, which occurs at a point labeled $e$ in Figure~\ref{fig:hopf}. 
The map $A_{\lambda}$ counts holomorphic disks of Maslov index 1, with weights corresponding to changes along the alpha boundary of a disk; see ~\eqref{eq:relative-hom-action}. 
Counting bigons with these weights, we obtain:
\begin{equation}
  A_\lambda(\ha) = \scV(\hb+\hd),\ A_\lambda(\hb)=0,\ A_\lambda(\hc)=\scU(\hb+\hd),\ A_\lambda(\hd)=\scU\ha.
  \label{eq:Alambda_hopf}
\end{equation}

We recall that in Section~\ref{sub:HF_knot} we defined a knotification map
 \[
 F\colon\cCFL^-(H)\to\cCFK^-(\widehat{H}),
 \]
which is a homotopy equivalence. In Proposition~\ref{prop:knotif_Floer}, we showed  that
\[
F(A_\lambda+\scU \Phi_{w_2})\simeq A_{\g} F.
\]
Hence, as a model for the pair $(\cCFK^-(\widehat{H}),A_\g)$, we may use $(\cCFL^-(H), A_{\lambda}+\scU \Phi_{w_2})$. Abusing notation slightly, we will write $A_{\g}$ for the endomorphism of $\cCFL^-(H)$ given by $A_\g:=A_{\lambda}+\scU \Phi_{w_2}$. 
One easily computes
\[
\Phi_{w_2}(\hd)=\ha,
\]
and $\Phi_{w_2}$ vanishes on the other generators.
Hence,
\begin{equation}
  A_\gamma(\ha) = \scV(\hb+\hd),\ A_\gamma(\hb)=\scU\ha,\ A_\gamma(\hc)=\scU(\hb+\hd),\ A_\gamma(\hd)=\scU\ha.
  \label{eq:Agamma_hopf}
\end{equation}

With a change of basis $\hd'=\hb+\hd$, we obtain the following presentation of $(\cCFK^-(\widehat{H}),A_\g)$:
\begin{equation}
\begin{tikzcd}[labels=description]
&\ha \ar[ld, "\scV"] & \hb\ar[l,dashed, "\scU"] \ar[l,bend right=50, "\scU"] \ar[d,dashed,"\scV"]\\
\hd'&& \hc \ar[ll, "\scU"]
\end{tikzcd}
\label{eq:hopf-link-with-homology-action}
\end{equation}
In~\eqref{eq:hopf-link-with-homology-action}, the dashed arrows denote differentials, and the solid arrows denote the action of $A_\g$.

We may obtain a simpler model of the homology action by replacing $A_\g$ with $A_{\g}+[\d, F]$, where $F$ is the $\scR$-equivariant map which satisfies 
\[
F(\ha)=\ha, \quad \text{and} \quad  F(\hb)=F(\hc)=F(\hd)=0.
\]
The resulting model for $(\cCFK^-(\widehat{H}),A_{\g})$ is shown in~\eqref{eq:hopf-link-with-homology-action-simpler}.
\begin{equation}
\begin{tikzcd}[labels=description]
&\ha \ar[ld, "\scV"] & \hb\ar[l,dashed, "\scU"] \ar[d,dashed,"\scV"]\\
\hd'&& \hc \ar[ll,  "\scU"]
\end{tikzcd}
\label{eq:hopf-link-with-homology-action-simpler}
\end{equation}

\subsection{The torus link $T(2, 2n)$}\label{sub:torus_link}
Let $L_{n}\subset S^{3}$ denote a 2-component link $T(2, 2n)$. Note that $L_{1}$ is the positive Hopf link. In this subsection, we study the Floer chain complex $\cCFL^-(L_{n})$ as an $\scR$-module. This gives the Floer chain complex $\cCFK^-(S^{2}\times S^{1}, H_{n})$, where $H_{n}$ is the knotification of $L_{n}$. 

The Heegaard diagram of the link $L_{n}$ in $S^{3}$ is shown in Figure \ref{heegdiag} and Floer chain complex is in Figure \ref{chainTorus}. The Heegaard diagram displayed therein is obtained from a doubly pointed open book whose page is a disk, and whose monodromy is $\g^n$, where $\g$ denotes a  Dehn-twist parallel to the boundary.

\begin{figure}[h]
\centering
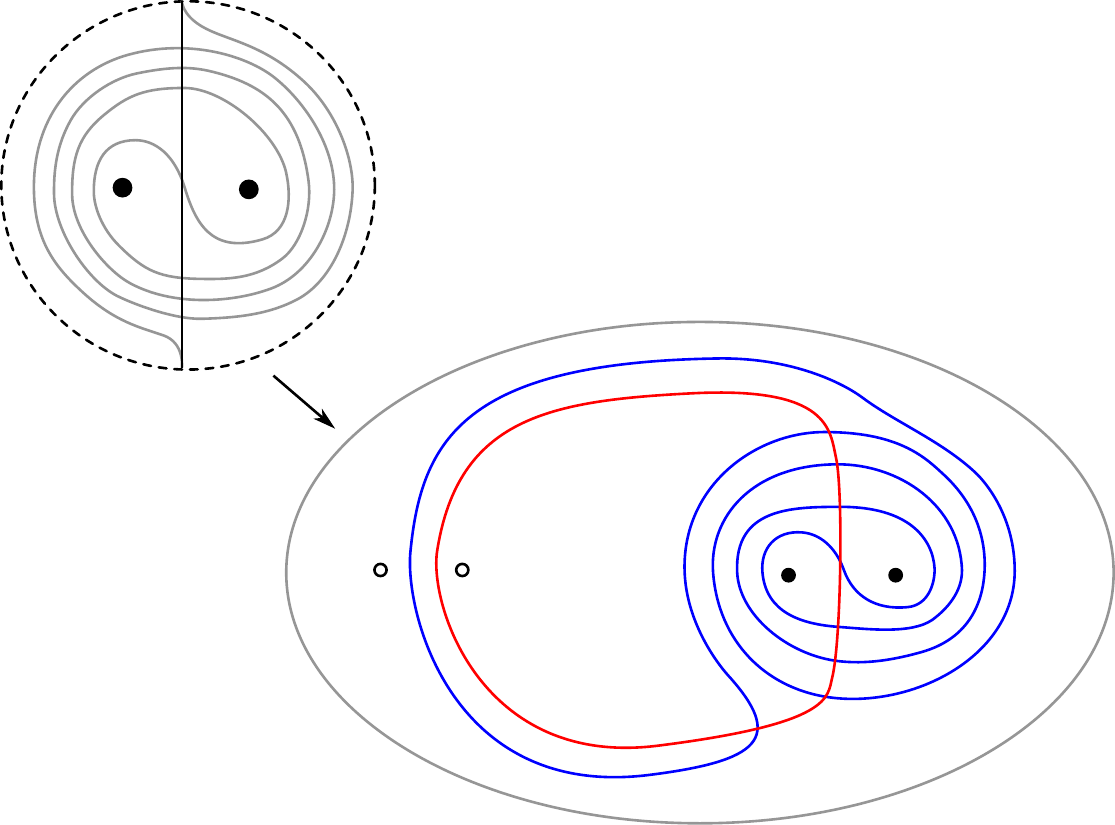
\caption{A Heegaard diagram for $T(2,4)$ from a doubly pointed open book. The dashed line is an arc $\lambda$ connecting $w_1$ and $w_2$.}
\label{heegdiag}
\end{figure} 

\begin{figure}[p]
\centering
\[
\begin{tikzcd}[column sep=1.4cm, row sep=1.7cm]
&[-.8cm]x_1
	\ar[dl,"\scV"]
	\ar[r, "\scU", shift right,swap]
	\ar[ddr,shift right, "\scU", pos=.2,swap]
	\ar[from=ddr,shift right,"\scV", pos=.8,swap] 
&x_2
	\ar[l, "\scV", shift right,swap] 
	\ar[r, "\scU", shift right,swap]
	\ar[ddr,shift right, "\scU", pos=.2,swap]
	\ar[from=ddr,shift right,"\scV", pos=.8,swap] 
&x_3
	\ar[l, "\scV",shift right,swap]
	\ar[dr,"\scU"]
&[-.8cm]
\\
x_0&&&&x_4\\
&x_7
	\ar[r, "\scU", shift right,swap]
	\ar[ul, "\scV"]
	\ar[uur,shift right, "\scU",swap, pos=.25,crossing over]
	\ar[from=uur, shift right, "\scV",swap,pos=.8,crossing over] 
&x_6
	\ar[l, "\scV", shift right,swap] 
	\ar[r, "\scU", shift right,swap]
	\ar[uur,shift right, "\scU",swap, pos=.25,crossing over]
	\ar[from=uur, shift right, "\scV",swap,pos=.8,crossing over] 
&x_5
	\ar[l, "\scV",shift right,swap]
	\ar[ur,"\scU"]
\end{tikzcd}
\]
\\
\[
\begin{tikzcd}[column sep=1.4cm, row sep=1.7 cm]
&[-.8cm]x_1
	\ar[dl, "\scV",swap]
	\ar[r, "\scU", shift right,swap]
	\ar[ddr,shift right, "\scU", pos=.2,swap]
	\ar[from=ddr,shift right,"\scV", pos=.8,swap] 
&x_2
	\ar[l, "\scV", shift right,swap] 
	\ar[r, "\scU", shift right,swap]
	\ar[ddr,shift right, "\scU", pos=.2,swap]
	\ar[from=ddr,shift right,"\scV", pos=.8,swap] 
&x_3
	\ar[l, "\scV", shift right,swap]
	\ar[r, "\scU", shift right,swap]
	\ar[ddr,shift right, "\scU", pos=.2,swap]
	\ar[from=ddr,shift right,"\scV", pos=.8,swap] 
&x_4
	\ar[l,"\scV",shift right,swap]
	\ar[r, "\scU", shift right,swap]
		\ar[ddr,shift right, "\scU", pos=.2,swap]
	\ar[from=ddr,shift right,"\scV", pos=.8,swap] 
&x_5
	\ar[l, "\scV",shift right,swap]
	\ar[dr, "\scU"]
&[-.8cm]
\\
x_0
&&&&&&x_6\\
&x_{11}
	\ar[r, "\scU", shift right,swap]
	\ar[ul, "\scV"]
	\ar[uur,shift right, "\scU",swap, pos=.25,crossing over]
	\ar[from=uur, shift right, "\scV",swap,pos=.8,crossing over] 
&x_{10}
	\ar[r, "\scU", shift right,swap]
	\ar[l, "\scV", shift right,swap] 
	\ar[uur,shift right, "\scU",swap, pos=.25,crossing over]
	\ar[from=uur, shift right, "\scV",swap,pos=.8,crossing over] 
&x_9
	\ar[r, "\scU", shift right,swap]
	\ar[l, "\scV", shift right,swap]
	\ar[uur,shift right, "\scU",swap, pos=.25,crossing over]
	\ar[from=uur, shift right, "\scV",swap,pos=.8,crossing over] 
&x_8
	\ar[r, "\scU", shift right,swap]
	\ar[l, "\scV", shift right,swap]
	\ar[uur,shift right, "\scU",swap, pos=.25,crossing over]
	\ar[from=uur, shift right, "\scV",swap,pos=.8,crossing over] 
&x_7
	\ar[l, "\scV", shift right,swap] 
	\ar[ur, "\scU",swap]	
\end{tikzcd}
\]
\\
\[
A_{\lambda}=\begin{tikzcd}[column sep=1.4cm, row sep=1.7 cm]
&[-.8cm]x_1
	\ar[from=dl, "\scU"]
	\ar[ddr, "\scU", pos=.2,swap]
&x_2
	\ar[ddr, "\scU", pos=.2,swap]
&x_3
	\ar[ddr, "\scU", pos=.2,swap]
&x_4
	\ar[ddr, "\scU", pos=.2,swap]
&x_5
	\ar[dr,shift left, "\scU"]
	\ar[from=dr, shift left, "\scV"]
&[-.8cm]
\\
x_0
&&&&&&x_6\\
&x_{11}
	\ar[from=ul, "\scU"]
	\ar[uur, "\scU",swap, pos=.25,crossing over]
&x_{10}
	\ar[uur, "\scU",swap, pos=.25,crossing over]
&x_9
	\ar[uur, "\scU",swap, pos=.25,crossing over]
&x_8
	\ar[uur, "\scU",swap, pos=.25,crossing over]
&x_7
	\ar[from=ur, "\scV",swap]	
\end{tikzcd}
\]
\\
\[
\Phi_{w_2}=\begin{tikzcd}[column sep=1.4cm, row sep=1.7 cm]
&[-.8cm]x_1
&x_2
	\ar[r, "1"]
&x_3
&x_4
	\ar[r, "1"]
&x_5
&[-.8cm]
\\
x_0
&&&&&&x_6\\
&x_{11}
	\ar[uur, "1"]
	\ar[r,"1"]
&x_{10}
	\ar[uur, "1"]
&x_9
	\ar[uur, "1"]
	\ar[r, "1"]
&x_8
	\ar[uur, "1"]
&x_7
	\ar[ur, "1"]	
\end{tikzcd}
\]
\caption{The chain complexes for $T(2,4)$ (1st level from top) and $T(2,6)$ (2nd level). On the third level is the map $A_\lambda$ on the complex for $T(2,6)$, and on the bottom is the map $\Phi_{w_2}$.}
\label{chainTorus}
\end{figure}
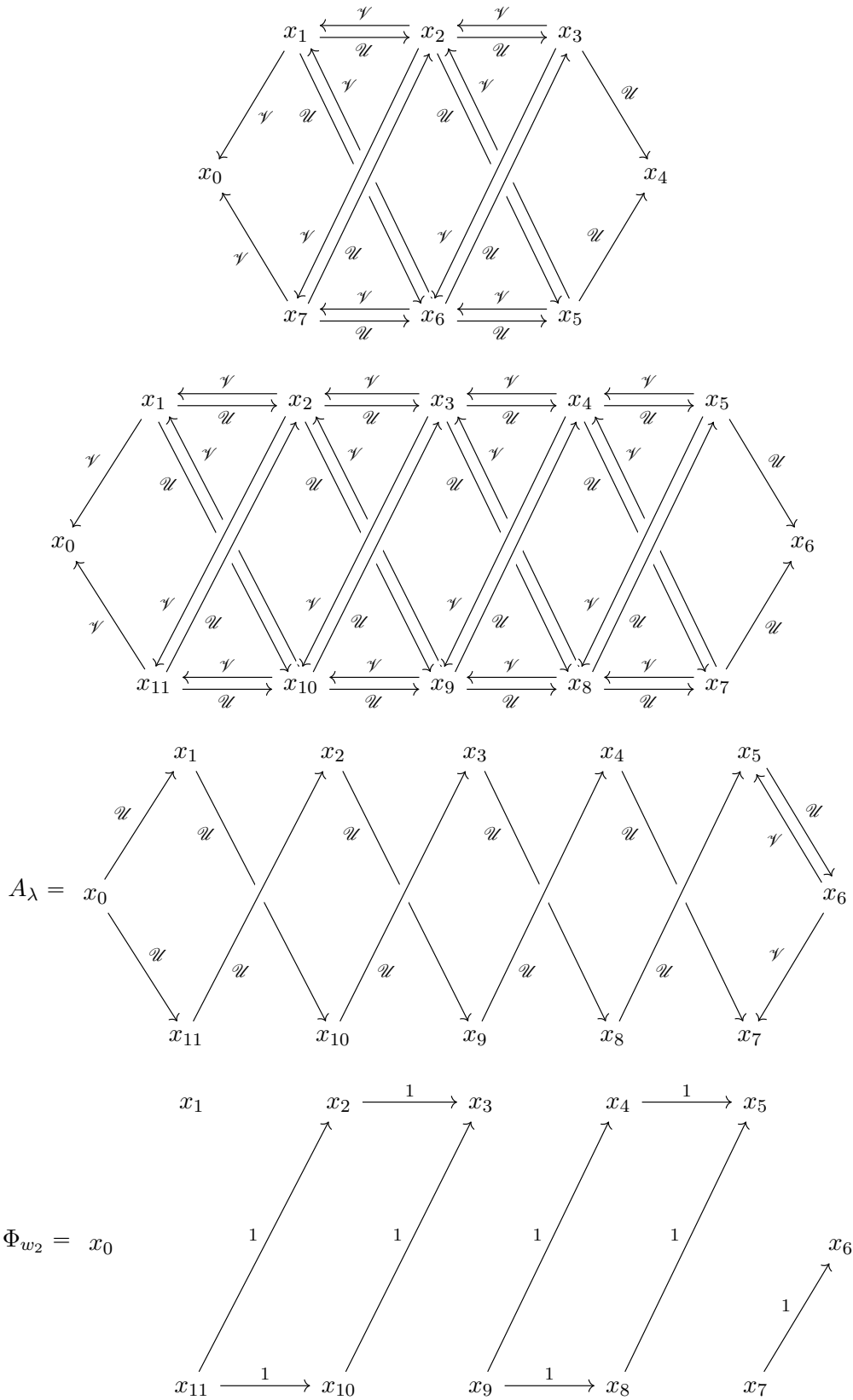

It is easy to see that there are $4n$ generators $x_{0}, \dots, x_{4n-1}$ of the complex $\cCFL^{-}(L_{n})$. By counting bigons, one obtains the following formulas for the differential:
\begin{equation}
  \begin{split}
    \partial x_{i}&=\partial x_{4n-i}=\scV (x_{i-1}+x_{4n-i+1})+\scU(x_{i+1}+x_{4n-i-1})\quad \textrm{ if $2\leq i \leq 2n-2$}\\
    \partial x_{1}&=\partial x_{4n-1}=\scV x_{0}+\scU(x_{2}+x_{4n-2}),\\
    \partial x_{2n-1}&=\partial x_{2n+1}=\scU x_{2n}+\scV(x_{2n-2}+x_{2n+2}),\\
    \d x_0&=\d x_{2n}=0.
  \end{split}
  \label{eq:partialxi}
\end{equation}

It is convenient to do the following bigraded change of basis to the complex $\cCFL^-(L_n)$. Namely we consider the basis $x_1,\dots, x_{2n-1},y_{0},\dots, y_{2n}$, where
\begin{equation}
\begin{split}
 y_{i}&=x_i+x_{4n-i}\\
y_{0}&=x_0,\\
 y_{2n}&=x_{2n}.
\end{split}
\qquad
\begin{split}
 &\text{if } \quad  1\le i\le 2n-1,\\
&\\
&\,
\end{split}
\label{new:basis}
\end{equation}
With this change of basis, the differential takes the form
\begin{equation}
  \begin{split}
    \partial x_{i}&=\scV y_{i-1}+\scU y_{i+1}\quad \text{if}\quad 1\le i\le 2n-1\\
    \d y_i&=0.
  \end{split}
  \label{eq:partialyi}
\end{equation}

The gradings of the generators in $\cCFL^-(L_n)$ are summarized in the following lemma:
\begin{lemma} 
\label{lem:grading2}
 If $1\le i\le 2n-1$, then
\[
\left(\gr_w(x_i),\gr_z(x_i)\right)=\left(\gr_w(y_{i}), \gr_{z}(y_{i})\right)=\left(\frac{1}{2}-2n+i, \frac{1}{2}-i\right).
\]
If $i=0$ or $i=2n$, then the same formula holds for $y_i$.
\end{lemma}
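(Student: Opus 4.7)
The proof splits into two steps: first extract the relative bigrading directly from the differential, then pin down the absolute normalization. For the relative part, recall that on a bigraded chain complex over $\scR$ the differential has bidegree $(-1,-1)$ (as one easily verifies, e.g., against the Hopf-link computation of \eqref{eq:hopf_differential}), while $\scU$ and $\scV$ carry bidegrees $(-2,0)$ and $(0,-2)$. Applying this to \eqref{eq:partialyi}, i.e.\ $\d x_i = \scV y_{i-1} + \scU y_{i+1}$ for $1\le i\le 2n-1$, yields
\[
\gr_w(y_{i-1}) = \gr_w(x_i)-1,\quad \gr_z(y_{i-1}) = \gr_z(x_i)+1,\quad \gr_w(y_{i+1}) = \gr_w(x_i)+1,\quad \gr_z(y_{i+1}) = \gr_z(x_i)-1.
\]
Since $y_i = x_i + x_{4n-i}$ is a single bigraded element, the two summands must share the same bigrading, so $\gr_w(y_i)=\gr_w(x_i)$ and $\gr_z(y_i)=\gr_z(x_i)$ for $1\le i\le 2n-1$. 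Chaining these relations together gives $\gr_w(y_i) = \gr_w(y_0)+i$ and $\gr_z(y_i) = \gr_z(y_0)-i$ for every $0\le i\le 2n$.

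To fix $(\gr_w(y_0),\gr_z(y_0))$, the most direct route is to apply Sarkar's combinatorial Maslov-index formula \cite{SarkarMaslov} to the doubly pointed open-book Heegaard diagram of Figure~\ref{heegdiag}, computing $\gr_w$ at $x_0$ and $\gr_z$ at $x_0$ (the latter by swapping the roles of $\ws$ and $\zs$). Alternatively, one may observe that $y_0$ and $y_{2n}$ are $\scR$-non-torsion cycles generating the towers of the localized homology; their $\gr_w$-gradings are then determined by the $d$-invariants of a sufficiently positive surgery on $L_n$ via Theorem~\ref{thm:vtop}, combined with the well-known $d$-invariants of torus-link surgeries. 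Either path yields $\gr_w(y_0) = \tfrac12 - 2n$ and $\gr_z(y_0) = \tfrac12$, which is also forced by the $\mathbb{Z}/2$-symmetry of $T(2,2n)$ exchanging its two link components: this symmetry must swap $y_0\leftrightarrow y_{2n}$ together with $\gr_w\leftrightarrow\gr_z$.

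The main difficulty is the absolute normalization; the relative gradings are a mechanical consequence of the $\scR$-grading conventions and the homogeneity of the basis change $y_i = x_i + x_{4n-i}$. As a sanity check, for $n=1$ the formula predicts $(\gr_w(y_0),\gr_z(y_0)) = (-3/2,1/2)$, which agrees with the explicit Hopf-link computation of Subsection~\ref{sub:hopf_link}, where $y_0 = x_0$ is identified with the cycle $\hc$.
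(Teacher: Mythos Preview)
Your relative-grading argument is correct and essentially identical to the paper's: both use the bidegrees of $\d$, $\scU$, $\scV$ together with \eqref{eq:partialyi} to see that the stated formula holds up to an overall shift.

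The difference is in pinning down the absolute normalization, and here your sketch is incomplete. The symmetry argument does \emph{not} force the result: swapping the two link components exchanges $\gr_w\leftrightarrow\gr_z$ and $y_0\leftrightarrow y_{2n}$, which combined with the relative gradings only yields $\gr_z(y_0)-\gr_w(y_0)=2n$, a single linear constraint leaving a one-parameter family. Your surgery approach via Theorem~\ref{thm:vtop} is also problematic as stated, since that theorem concerns null-homologous \emph{knots}, whereas $L_n$ is a two-component link; one would have to pass to the knotification first and invoke $d$-invariants of $S^2\times S^1$, which is more circuitous than you indicate. The Sarkar reference would in principle work but is not carried out.

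The paper's method is cleaner: setting $\scV=1$, $\scU=0$ collapses $\cCFL^-(L_n)$ to $\widehat{\CF}(S^3,w_1,w_2)\simeq \bF_{1/2}\oplus\bF_{-1/2}$; the surviving generators are $y_{2n-1}$ and $y_{2n}$, which immediately fixes their $\gr_w$-gradings (and a symmetric argument with $\scU=1$, $\scV=0$ handles $\gr_z$). This avoids any external input beyond the well-known grading of the doubly-pointed $3$-sphere. Your Hopf-link sanity check is correct and is a good consistency test, but does not substitute for the general argument.
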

\begin{proof} Recall that 
  $\d$ has $(\gr_{w},\gr_{z})$-bigrading of $(-1,-1)$, and that $\scU$ and $\scV$ have bigradings $(-2,0)$ and $(0,-2)$, respectively. Using the  description in  Figure~\ref{chainTorus}, it is easy to check that the formula holds up to an overall additive constant. That is, the formula holds for the relative $\gr_{w}$ and $\gr_{z}$ gradings. Hence, it is sufficient to show that the absolute $\gr_w$ grading is correct for one of the generators, and similarly for the $\gr_z$ grading. To check the absolute gradings, we note that if we set $\scV=1$ and $\scU=0$, then we recover the Heegaard Floer complex for $\widehat{\CF}(S^3,w_1,w_2)$, which is homotopy equivalent to $\bF_{1/2}\oplus \bF_{-1/2}$, as a $\gr_w$-graded chain complex. In this case, the complex  has generators $y_{2n-1}$ and $y_{2n}$, which pins down their $\gr_w$-grading. A similar argument computes the $\gr_z$-gradings. 
\end{proof}

We now compute the homology action $A_\g$ on the complex of the knotification of $T(2,2n)$. In order to use Proposition~\ref{prop:knotif_Floer}, we need to compute $A_{\lambda}$ and $\Phi_{w_2}$. For a choice of arc on the Heegaard surface as in Figure~\ref{heegdiag}, by counting bigons we obtain that $A_{\lambda}$ has the form
\begin{equation}
\begin{split}
A_\lambda(x_0)&=\scU(x_1+x_{4n-1}),\\
A_{\lambda}(x_{2n})&=\scV (x_{2n-1}+x_{2n+1})\\
A_{\lambda}(x_i)&=\scU x_{4n-i-1}  \\
A_{\lambda}(x_i)&=\scU x_{4n-i+1} 
\end{split}
\quad
\begin{split}
\,&\\
\,&\\
&\text{if } 0<i<2n, \quad \text{and}\\
&\text{if } 2n+1<i<4n.
\end{split}
\label{eq:rel-homology-action}
\end{equation}

Next, we need to understand the map $\Phi_{w_2}$. Counting bigons on diagrams like those shown in Figure~\ref{heegdiag} implies that $\Phi_{w_2}$ takes the following form:
\begin{equation}
\begin{split}
\Phi_{w_2}(x_{2i})&=x_{2i+1}\\
\Phi_{w_2}(x_{2i+1})&=x_{2i}+x_{4n-2i}  \\
\Phi_{w_2}(x_{2i})&=x_{4n-2i+1}  \\
\Phi_{w_2}(x_{2n+1})&=x_{2n},
\end{split}
\qquad
\begin{split}&\text{if } 0<i<n,\\
&\text{if } n< i<2n,\\
&\text{if } n< i<2n,\\
&\, 
\end{split}
\label{eq:Phi-w-2}
\end{equation}
and $\Phi_{w_2}$ vanishes on all other generators.

Finally, we combine Proposition~\ref{prop:knotif_Floer} with~\eqref{eq:rel-homology-action} and~\eqref{eq:Phi-w-2} to obtain the following formula for $A_\g\simeq A_{\lambda}+\scU\Phi_{w_2}$ on the knotified complex, which we write in terms of the basis from~\eqref{new:basis}:
\begin{equation}
\begin{split}
A_{\g}(x_{2i+1})&=\scU y_{2i+2}+\scU x_{2i+2}\\
A_{\g}(x_{2i})&=\scU y_{2i+1}\\
A_{\g}(y_{2i})&=\scU y_{2i+1}\\
A_{\g}(y_{2n})&=\scV y_{2n-1},
\end{split}
\qquad
\begin{split}
&\text{ if } 0\leq i<n-1,\\
&\text{ if } 0<i<n-1\\
&\text{ if } 0\le i<n,
\end{split}
\end{equation}
and $A_{\g}$ vanishes on all other generators. The example of $T(2,6)$ is shown below:
\begin{equation}
A_{\g}=\begin{tikzcd}[column sep={1.4cm,between origins},row sep=.8cm,labels=description]
& 
x_{1}
	\ar[dl,dashed, "\scV"]
	\ar[dr,dashed, "\scU"]
	\ar[dr,bend right=35, "\scU"]
	\ar[ddr,"\scU", bend right=15]
&&
x_3 
	\ar[dl,dashed, "\scV"]
	\ar[dr,dashed, "\scU"]
	\ar[dr,"\scU",bend right=35]
	\ar[ddr,"\scU", bend right=15]
&&
x_5
	\ar[dl, dashed, "\scV"]
	\ar[dr, dashed, "\scU"]
	\ar[dr,"\scU", bend right=35]
&&&
\\
y_0
	\ar[ddr,"\scU"]
&&
y_2
	\ar[ddr,"\scU"]
&&
y_4
	\ar[ddr,"\scU"]
&&
y_6
	\ar[ddl, "\scV"]
\\[1cm]
&&
x_2
	\ar[dl, dashed, "\scV"]
	\ar[dr,dashed, "\scU"]
	\ar[dr, "\scU", bend right=35]
&&
 x_{4}
	\ar[dl,dashed, "\scV"]
	\ar[dr,dashed,"\scU"]
	\ar[dr, "\scU", bend right=35]
\\
&y_1
&&
y_3
&& 
y_5
\end{tikzcd}
\label{eq:A_g-T26-rough}
\end{equation}
The dashed lines denote the differential and the solid lines denote the $A_{\g}$ action.
It is convenient to modify the map $A_{\g}$ by a further chain homotopy, so that it takes one staircase summand to the other, with no self arrows, as follows. 

Define a function $\delta\colon \N\to \bF$ by the formula
\[
\delta(n)=n(n-1)/2 \mod 2.
\]
Conceptually, it is easier to think of $\delta(n)$ as the sequence $0,0,1,1,0,0,1,1,\dots$. We define a homotopy $F$ as follows. On the first staircase summand, we define $F$ via the formula
\[
\begin{split}
F(y_{2i})&=\delta(2i)\cdot y_{2i} \\
F(x_{2i+1})&=\delta(2i+1)\cdot x_{2i+1}
\end{split}
\quad
\begin{split}
& \text{if } \quad 0\le i\le n,\\
& \text{if } \quad 0\le i<n.
\end{split}
\]
On the second staircase summand, we define $F$ via the formula
\[
\begin{split}
F(y_{2i+1})=\delta(2i)\cdot y_{2i+1}\\
F(x_{2i})=\delta(2i-1)\cdot x_{2i}
\end{split}
\qquad
\begin{split}
&\text{ if } \quad 0\le i<n\\
&\text{ if } \quad 0<i<n.
\end{split}
\]
Writing $A'_{\g}$ for $A_{\g}+[\d,F]$, we compute that
\[
\begin{split}
A_{\g}'(y_{2i})&=\scU y_{2i+1} \\
A_{\g}'(x_{2i+1})&=\scU x_{2i+2}\\
A_{\g'}(y_{2n})&=\scV y_{2n-1}.
\end{split}
\qquad
\begin{split}
&\text{ if }\quad  0\le i<n,\\
&  \text{ if }\quad  0\le i<n-1,\\
&\,
\end{split}
\]
Continuing our running example of $T(2,6)$, equation ~\eqref{eq:A_g-T26-rough} becomes the following
\begin{equation}
A_{\g}+[\d,F]=\begin{tikzcd}[column sep={1.4cm,between origins},row sep=.8cm,labels=description]
& 
x_{1}
	\ar[dl,dashed, "\scV"]
	\ar[dr,dashed, "\scU"]
	\ar[ddr,"\scU"]
&&
x_3 
	\ar[dl,dashed, "\scV"]
	\ar[dr,dashed, "\scU"]
	\ar[ddr,"\scU"]
&&
x_5
	\ar[dl, dashed, "\scV"]
	\ar[dr, dashed, "\scU"]
&&&
\\
y_0
	\ar[ddr,"\scU"]
&&
y_2
	\ar[ddr,"\scU"]
&&
y_4
	\ar[ddr,"\scU"]
&&
y_6
	\ar[ddl, "\scV"]
\\[1cm]
&&
x_2
	\ar[dl, dashed, "\scV"]
	\ar[dr,dashed, "\scU"]
&&
 x_{4}
	\ar[dl,dashed, "\scV"]
	\ar[dr,dashed,"\scU"]
\\
&y_1
&&
y_3
&& 
y_5
\end{tikzcd}
\end{equation}

We summarize the above computation as follows:

\begin{proposition} 
\label{prop:summary-T(2,2n)}
The pair $(\cCFK^-(S^2\times S^1, H_n),A_\g)$ has a model where $\cCFK^-(S^2\times S^1, H_n)$ is equal to $\cS^{n}\{\tfrac{1}{2},\tfrac{1}{2}\}\oplus \cS^{n-1}\{-\tfrac{1}{2},-\tfrac{1}{2}\}$ and $A_\gamma$ maps $\cS^n$ to $\cS^{n-1}$ on the chain level. Here, we recall that $\{i,j\}$ denotes a shift in the $(\gr_{w},\gr_z)$-grading by $(i,j)$, and $\cS^n$ and $\cS^{n-1}$ are the chain complexes in Definition \ref{def:super_basic}. 
\end{proposition}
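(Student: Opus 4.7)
The plan is to verify the decomposition directly from the data already assembled above: the explicit basis $\{x_1,\dots,x_{2n-1},y_0,\dots,y_{2n}\}$ from~\eqref{new:basis}, the differential~\eqref{eq:partialyi}, the gradings from Lemma~\ref{lem:grading2}, and the modified action $A'_\g = A_\g+[\d,F]$ computed in the paragraphs preceding the proposition. The proposition is essentially a repackaging of this data.

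First I would split the generators into two sets by parity: let $C_1$ be the span of $\{y_0, x_1, y_2, x_3, \dots, x_{2n-1}, y_{2n}\}$ and $C_2$ be the span of $\{y_1, x_2, y_3, \dots, x_{2n-2}, y_{2n-1}\}$. Because $\d(x_i) = \scV y_{i-1} + \scU y_{i+1}$ sends an $x$ of one parity to $y$'s of the same parity, and $\d(y_i)=0$, the differential preserves this splitting, so $\cCFK^-(S^2\times S^1, H_n) = C_1\oplus C_2$ as chain complexes over $\scR$.

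Next I would identify each summand with an appropriately shifted staircase. For $C_1$, using the correspondence $\xs_{2k}\leftrightarrow y_{2n-2k}$ and $\ys_{2k+1}\leftrightarrow x_{2n-1-2k}$, the differential $\d(\ys_{2k+1}) = \scU\xs_{2k}+\scV \xs_{2k+2}$ in $\cS^n$ matches $\d(x_{2n-1-2k}) = \scV y_{2n-2-2k}+\scU y_{2n-2k}$. By Lemma~\ref{lem:grading2}, $y_0$ has bigrading $(\tfrac12-2n,\tfrac12)$ and $y_{2n}$ has bigrading $(\tfrac12,\tfrac12-2n)$, which are precisely the bigradings of $\xs_{2n}$ and $\xs_0$ in $\cS^n\{\tfrac12,\tfrac12\}$; the remaining bigradings then match automatically. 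Thus $C_1\cong \cS^n\{\tfrac12,\tfrac12\}$. An analogous identification $\xs'_{2k}\leftrightarrow y_{2n-1-2k}$, $\ys'_{2k+1}\leftrightarrow x_{2n-2-2k}$ shows $C_2\cong \cS^{n-1}\{-\tfrac12,-\tfrac12\}$, where the base bigradings $(\tfrac32-2n,-\tfrac12)$ of $y_1$ and $(-\tfrac12,\tfrac32-2n)$ of $y_{2n-1}$ agree with the shifted gradings of $\xs'_{2n-2}$ and $\xs'_0$ in $\cS^{n-1}\{-\tfrac12,-\tfrac12\}$.

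Finally, I would read off from the explicit formulas
\[
A_\g'(y_{2i}) = \scU y_{2i+1}, \quad A_\g'(x_{2i+1}) = \scU x_{2i+2}, \quad A_\g'(y_{2n}) = \scV y_{2n-1},
\]
(and zero on remaining generators) that every nonzero arrow of $A_\g'$ starts in $C_1$ and ends in $C_2$. Hence $A_\g'$ is supported as a chain map $C_1\to C_2$, i.e., $\cS^n\to \cS^{n-1}$ on the chain level, which is the claim. The one subtlety worth flagging is the absolute $\gr_w$- and $\gr_z$-grading: the relative gradings are forced by the bigradings of $\scU,\scV$ and $\d$, but the absolute gradings must be pinned down from Lemma~\ref{lem:grading2}, whose proof in turn is reduced to computing $\widehat{\CF}(S^3,w_1,w_2)$. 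Apart from this bookkeeping with indices and gradings, the proof is a direct verification with no remaining technical obstacle.
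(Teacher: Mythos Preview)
Your proposal is correct and follows exactly the approach of the paper: the proposition is stated there as a summary of the preceding computation, and you have made explicit the parity splitting, the identifications of $C_1$ and $C_2$ with the shifted staircases (including the grading check via Lemma~\ref{lem:grading2}), and the observation that every arrow of $A'_\g$ goes from $C_1$ to $C_2$. There is nothing missing; this is precisely the intended bookkeeping.
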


We now consider mirror of the torus link $T(2,2n)$, which we denote by $\underline{L}_n$. We denote its knotification by $\underline{H}_n$. On the level of Floer complexes, taking the mirror amounts to replacing the link Floer complex by the dual complex over the ring $\scR$. In practice, this amounts to reversing all the arrows in the differential and multiplying the $(\gr_w,\gr_z)$-bigrading by an overall factor of $-1$. The homology action on the mirror is also the dual. We summarize this as follows:

\begin{proposition} 
\label{prop:summary-mirrorT(2,2n)}
The pair $(\cCFK^-(S^2\times S^1, \underline{H}_n),A_\g)$ has a model where $\cCFK^-(S^2\times S^1, \underline{H}_n)$ is equal to $\underline{\cS}^{n}\{-\tfrac{1}{2}, -\tfrac{1}{2}\}\oplus\underline{\cS}^{n-1}\{\tfrac{1}{2}, \tfrac{1}{2}\}$ and $A_\gamma$ maps $\underline{\cS}^{n-1}$ to $\underline{\cS}^{n}$ on the chain level. 
\end{proposition}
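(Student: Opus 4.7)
The plan is to reduce the proof to Proposition~\ref{prop:summary-T(2,2n)} via duality, supplemented by direct verification on the Heegaard diagram of Figure~\ref{heegdiag}. On the level of Heegaard diagrams, taking the mirror of a link corresponds to the same diagram with orientation reversed; on the level of link Floer chain complexes, this has the effect of replacing $\cCFL^-(L_n)$ by its $\scR$-linear dual, with all bigradings negated. Since $\cS^n$ and $\underline{\cS}^n$ are dual by Definition~\ref{def:super_basic}, and knotification commutes with mirroring (one performs the same band move, which is orientation-neutral), the statement should be the formal dual of Proposition~\ref{prop:summary-T(2,2n)}.

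Concretely, I would first set up generators $\underline{x}_0,\dots,\underline{x}_{4n-1}$ corresponding to the same intersection points as the $x_i$. The differential counts the same holomorphic bigons but with reversed orientation, so the arrows in \eqref{eq:partialxi} all reverse, and by Lemma~\ref{lem:grading2} the bigradings become
\[
(\gr_w(\underline{x}_i),\gr_z(\underline{x}_i))=\left(-\tfrac{1}{2}+2n-i,\ -\tfrac{1}{2}+i\right)
\]
for $1\le i\le 2n-1$, and analogously for $\underline{x}_0$ and $\underline{x}_{2n}$. Then the change of basis $\underline{y}_i=\underline{x}_i+\underline{x}_{4n-i}$ (for $1\le i\le 2n-1$, with $\underline{y}_0=\underline{x}_0$ and $\underline{y}_{2n}=\underline{x}_{2n}$), parallel to \eqref{new:basis}, splits the differential into
\[
\partial \underline{x}_i=0, \qquad \partial \underline{y}_i=\scV\underline{x}_{i-1}+\scU\underline{x}_{i+1}
\]
(with boundary conventions at $i=0$ and $i=2n$ as before), exhibiting $\cCFL^-(\underline{L}_n)$ as the direct sum of two dual staircases $\underline{\cS}^n\{-\tfrac12,-\tfrac12\}\oplus\underline{\cS}^{n-1}\{\tfrac12,\tfrac12\}$. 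The grading shifts are fixed by comparison with the negated version of Lemma~\ref{lem:grading2}.

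Next, I would compute $A_\lambda$ and the basepoint actions on the mirror. Since they also count the same disks with reversed orientation, their formulas are obtained from \eqref{eq:rel-homology-action} and \eqref{eq:Phi-w-2} by reversing each arrow. Applying Proposition~\ref{prop:knotif_Floer} (in the form $F(A_\lambda+\scV\Psi_{z_2})\simeq A_\gamma F$, which is the version more convenient after dualization), I obtain a preliminary description of $A_\gamma$ on the knotification that is the arrow-reversal of \eqref{eq:A_g-T26-rough}. A chain homotopy $F'$ defined by the same $\delta(n)=n(n-1)/2\bmod 2$ pattern as in the proof of Proposition~\ref{prop:summary-T(2,2n)} then cleans up the self-arrows within each staircase summand, leaving a map $\underline{\cS}^{n-1}\to\underline{\cS}^n$ on the nose.

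The main obstacle is bookkeeping: making sure that (i) the grading shifts and direction reversals are consistent across the differential, the relative homology action, and the basepoint actions, and (ii) the $A_\gamma$ arrows, after the homotopy simplification, point in the claimed direction (from $\underline{\cS}^{n-1}$ into $\underline{\cS}^n$, i.e.\ opposite to the positive case). This last point is in fact the expected consequence of dualization: the map $\cS^n\to\cS^{n-1}$ of Proposition~\ref{prop:summary-T(2,2n)} dualizes to a map $(\cS^{n-1})^\ast\to(\cS^n)^\ast$, which under the identifications $(\cS^k)^\ast\cong\underline{\cS}^k$ is precisely a map $\underline{\cS}^{n-1}\to\underline{\cS}^n$, matching the statement of the proposition.
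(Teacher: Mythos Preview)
Your approach---reducing to Proposition~\ref{prop:summary-T(2,2n)} by duality---is exactly the paper's: the paper gives no separate argument and simply records that taking the mirror dualizes the complex, reverses all arrows, negates the bigradings, and hence dualizes $A_\gamma$ as well, turning $\cS^n\to\cS^{n-1}$ into $\underline{\cS}^{n-1}\to\underline{\cS}^n$. One bookkeeping slip to fix: after the change of basis $\underline{y}_i=\underline{x}_i+\underline{x}_{4n-i}$ on the mirror, the roles of $\underline{x}$ and $\underline{y}$ in your displayed differential are swapped (one gets $\partial\underline{y}_i=0$ and $\partial\underline{x}_i=\scU\,\underline{y}_{i-1}+\scV\,\underline{y}_{i+1}$, with $\partial\underline{x}_0=\scV\,\underline{y}_1$ and $\partial\underline{x}_{2n}=\scU\,\underline{y}_{2n-1}$); this does not affect the splitting into $\underline{\cS}^n\oplus\underline{\cS}^{n-1}$ or the direction of $A_\gamma$, and your concluding duality paragraph already gives the clean argument.
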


\subsection{The Borromean knot $\Bor$}
\label{sub:borro}
Let $\Bor \subset\#^2 S^2 \times S^1$ be the Borromean knot,
that is, the knot
obtained from the Borromean rings by a zero-framed surgery on two of its components.
The Heegaard Floer chain complex of $\Bor$ is described in \cite[Proposition 9.2]{OSKnots}. 
We adapt the calculation of \cite[Section 5]{BHL} and 
\cite[Section 4]{BCG} to the present context.

The chain complex $\cCFK^-(\Bor)$ is homotopy equivalent to $\bF^4\otimes_{\bF_2} \scR$, with vanishing differential. We write $1,x,y,xy$ for the generators of $\bF^4$, which we can think of as being generators of $H^*(\bT^2)$. The bigradings are as follows:
\begin{equation}
\begin{split} (\gr_w(1),\gr_z(1))=(1,-1),\quad& (\gr_w(x),\gr_z(x))=(\gr_w(y),\gr_z(y))=(0,0),\quad \text{and}
\\
& (\gr_w(xy),\gr_z(xy))=(-1,1).
 \end{split}
 \label{eq:borromean-gradings}
\end{equation}

Up to an overall grading preserving isomorphism, the $H_1(\#^2 S^2\times S^1)$ module structure is uniquely determined by the formal properties of the action. In detail, if we write $x^*$ and $y^*$ for the two generators of $H_1(\#^2 S^2\times S^1)$, then the module structure takes the following form (up to overall isomorphism):
\[
A_{y^*}=
\begin{tikzcd}[labels=description]
& xy 
	\ar[dl, "\scV"]\\
x&& y 
	\ar[ul, "\scU"]
	\ar[dl, "\scV"]\\
&1 
	\ar[ul, "\scU"]
\end{tikzcd}
\qquad\qquad
A_{x^*}=
\begin{tikzcd}[labels=description]
& xy 
	\ar[dr, "\scV"]\\
x
	\ar[ur, "\scU"]
	\ar[dr, "\scV"]
&& y\\
&1 \ar[ur, "\scU"]
\end{tikzcd}
\]

For the explicit description of the top and bottom towers of $\cCFK^-(\Bor)$, we refer the readers to \cite[Section 5]{BHL}.

\section{ On tensor products of staircase complexes}\label{sec:tensor}

In this section, we compute the correction terms of certain tensor products of staircase complexes.

\subsection{Staircase complexes}

A \emph{positive staircase complex} $\cC$ is a bigraded chain complex over $\scR$ with generators
$x_0,y_1,x_2,\dots, y_{2n-1}, x_{2n}$ for some $n>0$
with differential given by 
$\d y_{2i+1}=\scU^{\a_i}\cdot x_{2i}+\scV^{\b_i}\cdot x_{2i+2}$
extended equivariantly over $\scR$, for some positive integers $\a_i$ and $\b_i$.  We assume that $\d$, $\scU$ and $\scV$ are $(-1,-1)$, $(-2,0)$ and $(0,-2)$ bigraded, respectively. We assume that $\a_i=\b_{n-i-1}$. Furthermore, we assume the gradings are normalized so that $H_*(\cC/(\scU-1))\cong \bF[\scV]$ has generator with $\gr_{z}$-grading $0$, and $H_*(\cC/(\scV-1))\cong \bF[\scU]$ has generator with $\gr_{w}$-grading $0$. A \emph{negative staircase complex} is the dual complex of a positive staircase complex. 

\begin{lemma} \label{lem:basic-staircase-facts}
Suppose that $\cP=(P_1\to P_0)$ is a positive staircase complex, which we view as a free complex over $\scR$.
\begin{enumerate}
\item $H_*(\cP)$ is torsion free as an $\scR$-module.
\item There is a $(\gr_{w}, \gr_{z})$-grading preserving chain map
\[
F\colon \cP\to \scR,
\]
which sends $\scR$-non-torsion cycles to $\scR$-non-torsion cycles. Furthermore $F$ may be taken to map each generator of $P_0$ to a non-zero monomial in $\scR$, and vanish on $P_1$. 
\end{enumerate}
\end{lemma}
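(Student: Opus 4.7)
The plan is to address (2) first, since the explicit chain map $F$ it provides will be the key tool for proving (1).

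For (2), I will define $F$ on generators by $F(y_{2i+1}) = 0$ and $F(x_{2i}) = \scU^{A_i} \scV^{B_i}$, where $A_i := \sum_{j=0}^{i-1}\a_j$ and $B_i := \sum_{j=i}^{n-1}\b_j$ (with the conventions $A_0 = B_n = 0$), extended $\scR$-linearly. A direct calculation gives
\[
F(\d y_{2i+1}) = \scU^{\a_i} F(x_{2i}) + \scV^{\b_i} F(x_{2i+2}) = \scU^{A_{i+1}}\scV^{B_i} + \scU^{A_{i+1}}\scV^{B_i} = 0
\]
in characteristic $2$, so $F$ is a chain map. For the grading check, setting $\scU = 1$ in $\cP$ and computing homology identifies $[x_{2n}]$ as the generator of the top tower, so the normalization forces $\gr_z(x_{2n}) = 0$; the relation $\gr_z(x_{2i}) = \gr_z(x_{2i+2}) - 2\b_i$ (from $\d y_{2i+1}$) then yields $\gr_z(x_{2i}) = -2B_i$. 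Symmetrically, $\gr_w(x_{2i}) = -2A_i$. Since $\scU^{A_i}\scV^{B_i} \in \scR$ has bigrading exactly $(-2A_i,-2B_i)$, the map $F$ is bigraded.

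For (1), I first observe that $H_1(\cP) = 0$: for any putative cycle $\sum_i r_{2i+1} y_{2i+1}$, the coefficient of $x_{2n}$ in its boundary is $\scV^{\b_{n-1}} r_{2n-1}$, which forces $r_{2n-1} = 0$ since $\scR$ is a domain; iterating backwards on the coefficients of $x_{2n-2}, x_{2n-4},\dots$ kills every $r_{2i+1}$. Hence $\cP$ is a free resolution of $H_0(\cP)$. Next, I will show that $F$ induces an isomorphism $F_*\colon H_0(\cP) \xrightarrow{\;\sim\;} I$, where $I := (c_0,\dots,c_n) \subset \scR$ is the staircase monomial ideal with generators $c_i := F(x_{2i})$. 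Surjectivity is immediate. Injectivity reduces to the syzygy statement: every $\scR$-linear relation among the $c_i$ is an $\scR$-linear combination of the binomial relations $\scU^{\a_i} c_i + \scV^{\b_i} c_{i+1} = 0$ (which are precisely the images $F_*[\d y_{2i+1}]$). This is a standard property of staircase monomial ideals in the UFD $\bF_2[\scU,\scV]$; one proof proceeds by induction on $n$, using that $\gcd(\scU^{\a_{n-1}},\scV^{\b_{n-1}}) = 1$ forces $\scV^{\b_{n-1}} \mid r_n$ in any syzygy $\sum_i r_i c_i = 0$, after which one subtracts an appropriate multiple of the top binomial syzygy and applies the inductive hypothesis to the remaining relation among $c_0,\dots,c_{n-1}$.

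Granting $F_*\colon H_0(\cP) \cong I$, part (1) follows at once: $I$ is a submodule of the domain $\scR$ and hence torsion-free, so the same holds for $H_0(\cP)$. The non-torsion clause of (2) is then immediate: a cycle $z \in \cP$ represents a non-torsion class in $H_0(\cP)$ iff $[z] \neq 0$, which by injectivity of $F_*$ holds iff $F(z) \neq 0$ in $\scR$, which in a domain is precisely the condition that $F(z)$ is non-torsion. The only step that is not a routine chain-level verification is the syzygy injectivity calculation, but this is a classical result for staircase monomial ideals in a two-variable polynomial ring and is handled by the inductive argument sketched above.
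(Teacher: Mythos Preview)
Your proof is correct. For part~(2) you construct exactly the map the paper has in mind, but you supply the explicit exponents and grading verification that the paper leaves to the reader.

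For part~(1) you take a different and more structural route. The paper argues directly that multiplication by $\scU^i\scV^j$ is injective on $H_0(\cP)$, reducing to the claim that if $\scU^i\scV^j x\in\im(\d)$ then $x\in\im(\d)$, which it declares ``straightforward to verify.'' You instead first show $H_1(\cP)=0$ (a fact the paper defers to the next lemma on exactness) and then identify $H_0(\cP)$ with the staircase monomial ideal $I\subset\scR$ via $F_*$, so torsion-freeness is inherited from the domain $\scR$. This is a stronger statement: it gives an explicit model for $H_0(\cP)$, and your injectivity of $F_*$ simultaneously dispatches the non-torsion clause of~(2) without a separate argument. The cost is the syzygy computation, but your inductive sketch (using $\scV^{\b_{n-1}}\mid c_i$ for $i<n$ and $\gcd(\scU^{A_n},\scV^{\b_{n-1}})=1$) is the standard one and goes through cleanly.
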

\begin{proof}
 For the first claim, using the grading properties of $\cP$ it is sufficient to show that $\scU^i \scV^j\cdot [x]\neq 0$ if $[x]\neq 0\in H_*(\cP)$. To see this, it is sufficient to see that if $x\in P_0$ and $\scU^i \scV^j\cdot x\in \im(P_1)$, then $x\in \im(P_1)$. This is straightforward to verify.
 
 For the second claim, we note that the stated map $F$ is clearly a chain map and sends $\scR$-non-torsion cycles to $\scR$-non-torsion cyles.
\end{proof}

\begin{definition}
We call a complex $\cC$ a \emph{positive multi-staircase} if it is the tensor product of a nonzero number of positive staircase complexes. We call $\cC$ a \emph{negative multi-staircase} if it is the tensor product of a nonzero number of negative staircases.
\end{definition}

The dual of a positive multi-staircase is a negative multi-staircase, and vice-versa.

By construction, a positive staircase $\cC$ has a $\Z$-filtration with two levels, and we write $\cC=(C_1\to C_0)$. Hence, a positive multi-staircase with $n$ factors has a $\Z$-filtration with $n+1$ non-trivial levels, for which we denote
\begin{equation}
\cC=(C_n\to C_{n-1}\to \cdots \to C_1\to C_0). \label{eq:sequence-multi-staircase}
\end{equation}
In general, the sequence in equation~\eqref{eq:sequence-multi-staircase} will not be exact. As an example, one may consider the Floer homology of $T(2,3)\# T(2,3)\# T(2,3)$. If $\cC=(C_n\to \cdots \to C_0)$ is a positive multi-staircase, we say that $\cC$ is an \emph{exact} multi-staircase if the following sequence is exact:
\[
0\to C_n\to \cdots \to C_0.
\]
In particular, an exact multi-staircase is a free resolution of its homology.

\begin{lemma}\label{lem:exact}
\begin{enumerate}
\item[]
\item  Every positive staircase is exact.
\item The tensor product of two positive staircases is exact.
\end{enumerate}
\end{lemma}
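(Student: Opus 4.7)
For part (1), I would verify injectivity of the differential $\partial\colon C_1 \to C_0$ of a positive staircase directly. Writing a general element of $C_1$ as $\sum_{i=0}^{n-1} p_i y_{2i+1}$ with $p_i \in \scR$, the image under $\partial$ has coefficient $p_0 \scU^{\a_0}$ on $x_0$, coefficient $p_{n-1}\scV^{\b_{n-1}}$ on $x_{2n}$, and coefficient $p_{j-1}\scV^{\b_{j-1}} + p_j \scU^{\a_j}$ on $x_{2j}$ for $0 < j < n$. The endpoint relation at $x_0$ forces $p_0 = 0$ since $\scR = \F_2[\scU,\scV]$ is an integral domain, and then the interior relations propagate $p_j \scU^{\a_j} = p_{j-1}\scV^{\b_{j-1}}=0$ inductively, yielding $p_j = 0$ for all $j$.

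For part (2), let $\cC = (C_1 \to C_0)$ and $\cD = (D_1 \to D_0)$ be positive staircases; the tensor product carries the three-level sequence
\[
0 \to C_1 \otimes D_1 \xrightarrow{d_2} (C_1\otimes D_0)\oplus (C_0\otimes D_1) \xrightarrow{d_1} C_0\otimes D_0.
\]
Exactness at $C_1\otimes D_1$ is immediate: the map $d_2$ sends $y\otimes w$ to $(y\otimes\partial w,\,\partial y \otimes w)$, and its second component $\partial\otimes \id_{D_1}$ is already injective because $\partial$ is injective on $C_1$ by part (1) and $D_1$ is a free $\scR$-module.

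The heart of the argument is exactness at the middle term. The plan is to reinterpret part (1) as the statement that $\cC$ is a length-one free resolution of $N := H_*(\cC)$, and similarly $\cD$ resolves $M := H_*(\cD)$. Then $H_p(\cC \otimes \cD) \cong \Tor_p^\scR(N, M)$, and $\Tor_p$ vanishes for $p \geq 2$ automatically, so the problem reduces to $\Tor_1^\scR(N, M) = 0$. Computing $\Tor_1$ by tensoring the resolution $\cC$ with $M$, I need to show that the induced map $C_1 \otimes_\scR M \to C_0 \otimes_\scR M$ is injective, which is formally the same inductive bookkeeping as in part (1) but with module-valued coefficients $n_i \in M$. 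The key input is that $M$ is torsion-free as an $\scR$-module by Lemma~\ref{lem:basic-staircase-facts}(1), so each $\scU^{\a_i}$ and $\scV^{\b_j}$ acts injectively on $M$; the endpoint relation then kills $n_0$, and induction kills the rest. I expect the main (mild) obstacle to be making sure the module-valued induction is organized cleanly, but no new idea beyond torsion-freeness is needed.
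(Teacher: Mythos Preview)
Your proof is correct, and for part~(2) it takes a genuinely different route from the paper. The paper argues directly on the three-term complex $\cE=(E_2\to E_1\to E_0)$: after noting $E_2\to E_1$ is injective, it observes that any class in $H_*(E_1)$ must be $\scR$-torsion (since non-torsion classes in $H_*(\cE)$ necessarily have a component in filtration level~$0$), then uses the bigrading to conclude that every such class is annihilated by some monomial $\scU^j\scV^k$, and finally checks that $\im(E_2\to E_1)$ contains no element of the form $\scU^i\scV^j\cdot x$ with $x$ a basis element and $i+j>0$ (using torsion-freeness of $H_*(\cC)$ and $H_*(\cD)$), forcing $H_*(E_1)=0$.

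Your approach instead packages everything via $\Tor$: once part~(1) says $\cC$ and $\cD$ are length-one free resolutions of their homologies, $H_p(\cC\otimes\cD)\cong\Tor_p^{\scR}(H_*\cC,H_*\cD)$, so exactness reduces to $\Tor_1=0$, which you verify by rerunning the inductive computation of part~(1) with coefficients in the torsion-free module $M=H_*(\cD)$ (invoking Lemma~\ref{lem:basic-staircase-facts}(1)). This is cleaner and more conceptual; it also makes transparent a slight generalization (you only need $H_*(\cD)$ torsion-free, not that $\cD$ itself is a staircase). The paper's argument, by contrast, is more self-contained and avoids the $\Tor$ formalism, at the cost of a somewhat terse final step.
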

\begin{proof} Exactness of a positive staircase $\cC=(C_1\to C_0)$ amounts to the claim that the map $C_1\to C_0$ is injective, which is easy to verify.

  Next, suppose $\cC=(C_1\to C_0)$ and $\cD=(D_1\to D_0)$ are staircases. We claim that their tensor product is also exact. Let $\cE=(E_2\to E_1\to E_0)$ denote this tensor product. Clearly the map $E_2\to E_1$ is injective, so it is sufficient to show that $H_*(E_1)=0$. The homology $H_*(\cE)$ decomposes as the direct sum $H_*(E_2)\oplus H_*(E_1)\oplus H_*(E_0)$. Since every $\scR$-non-torsion element contains a non-zero summand of $H_*(E_0)$, it follows that $H_*(E_1)$ consists only of $\scR$-torsion elements. Since $\cE$ is bigraded, it follows that each element $[x]\in H_*(E_1)$ satisfies $\scU^j\scV^k \cdot [x]=0$ for some $j$ and $k$. In particular, if $x\in E_1$ is a cycle, then $\scU^j\scV^k\cdot x\in \im(E_2\to E_1)$. In particular, to show that $H_*(E_1)=0$ it is sufficient to show that $\im(E_2\to E_1)$ contains no elements of the form $\scU^i\scV^j\cdot x$ with $i+j>0$. To see this, note that the map from $E_2$ is the sum of the maps $C_1\otimes D_1\to C_1\otimes D_0$ and $C_1\otimes D_1\to C_0\otimes D_1$. Neither of these maps has image containing any elements of the form $\scU^i\scV^j\cdot  x$ for $i+j>0$, since positive staircase complexes have torsion free homology.
\end{proof}

\subsection{$V_{s}$-invariants of  tensor products of staircases}

In this subsection, 
we compute the $V_{s}$-invariants of certain tensor products of staircases. We wish to understand the $V_{s}$-invariants of tensor products of staircases where some factors are positive and some negative. Of course, we may group factors and write such a complex as a tensor product of $\cN\otimes \cP$, where $\cN$ is a negative multi-staircase, and $\cP$ is a positive multi-staircase. Clearly,
\[
\cN\otimes \cP\cong \Mor_{\scR}(\cN^\vee, \cP),
\]
where $\Mor_{\scR}(N^\vee,\cP)$ denotes the chain complex of $\scR$-module homomorphisms from $\cN^\vee$ to $\cP$. In particular, to understand the $V_{s}$-invariants of arbitrary tensor products of positive and negative staircases, it is sufficient to understand the morphism complex between two positive multi-staircases.

It is also helpful to note that if $\cN$ and $\cP$ are multi-staircases (of either sign),  then a cycle $\phi\in \Mor_{\scR}(\cN^\vee,\cP)$ is $\scR$-non-torsion as a morphism if and only if $\phi$ maps $\scR$-non-torsion cycles to $\scR$-non-torsion cycles.

The following result is by now classical. (See \cite[Proposition~5.1]{BLmain}).
\begin{proposition}\label{prop:vj_pos}
  Let $\cP=(P_n\to \cdots \to P_0)$ be a positive multi-staircase and let $s\in\Z$. Then
  \[
  V_s(\cP)=\min_{x\in \cG(P_0)}  
 \max(\alpha(x),\beta(x)-s),
  \]
  where $\alpha(x)=-\tfrac{1}{2} \gr_{w}(x)$, $\beta(x)=-\tfrac{1}{2} \gr_{z}(x)$, and $\cG(P_0)$ denotes the set of homogeneously graded generators of $P_0$.
\end{proposition}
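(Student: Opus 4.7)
The plan is to establish matching upper and lower bounds for $V_s(\cP)$, both exploiting the chain map $F\colon \cP\to \scR$ provided by Lemma~\ref{lem:basic-staircase-facts}(2). That lemma is stated for a single positive staircase, but extending it to a positive multi-staircase is routine: tensor the factorwise maps and post-compose with the ring multiplication $\scR^{\otimes n}\to \scR$. The resulting map is $(\gr_{w},\gr_{z})$-preserving, sends each generator $x\in\cG(P_0)$ to a nonzero monomial in $\scR$, vanishes on $P_{\ge 1}$, and carries $\scR$-non-torsion cycles to nonzero elements of $\scR$.

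For the upper bound, I would first note that every $x\in\cG(P_0)$ is a cycle, because the differential of $\cP$ strictly lowers the filtration index and $P_0$ sits at the bottom. For integers $a,b\ge 0$, a short bigrading computation shows that $\scU^a \scV^b\cdot x$ lies in $\scA_s(\cP)$ precisely when $b-a = s+\alpha(x)-\beta(x)$, in which case its $\gr_{w}$-grading equals $-2(\alpha(x)+a)$. Elementary optimization under $a,b\ge 0$ gives $\min(\alpha(x)+a) = \max(\alpha(x),\beta(x)-s)$. Applying $F$ confirms that the optimizing cycle represents an $\bF[U]$-non-torsion class (where $U=\scU\scV$), so $V_s(\cP)\le \max(\alpha(x),\beta(x)-s)$; minimizing over $x$ gives one direction.

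For the lower bound, suppose $z\in \scA_s(\cP)$ is an $\bF[U]$-non-torsion homogeneous cycle of $\gr_{w}$-grading $-2V$. Because $F$ preserves non-torsion cycles and annihilates $P_{\ge 1}$, the image $F(z)\ne 0$ forces $z$ to contain a nonzero monomial $\scU^a\scV^b\cdot x$ with $x\in\cG(P_0)$ and $a,b\ge 0$. Homogeneity of $z$ pins down $a = V-\alpha(x)$ and $b = V-\beta(x)+s$, and the two nonnegativity constraints immediately yield $V\ge \max(\alpha(x),\beta(x)-s)\ge \min_{x'\in\cG(P_0)}\max(\alpha(x'),\beta(x')-s)$.

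The only nonroutine point I anticipate is verifying the generalization of $F$ to multi-staircases, together with the fact that it still detects non-torsion classes. Once that technical ingredient is in place, both halves of the inequality reduce to comparing exponents in monomials in $\scR$, with the two cases $s+\alpha(x)-\beta(x)\gtrless 0$ producing the two arguments of the max.
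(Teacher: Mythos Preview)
Your proof is correct and follows essentially the same approach as the paper: both arguments extend the map $F$ of Lemma~\ref{lem:basic-staircase-facts}(2) to the multi-staircase setting by tensoring, use it to show that the $\scR$-non-torsion cycles are exactly those with a nonzero $P_0$-summand, and then reduce the computation of $V_s$ to optimizing the $\gr_w$-grading of monomial multiples $\scU^a\scV^b x$ over $x\in\cG(P_0)$. Your presentation as separate upper and lower bounds is slightly more explicit than the paper's, and you are right to flag the extension of $F$ to multi-staircases as the only point needing care; the paper invokes Lemma~\ref{lem:basic-staircase-facts} without spelling this out.
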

\begin{proof}
  Lemma~\ref{lem:basic-staircase-facts} implies that a homogenously graded element $x\in \cP$ is an $\scR$-non-torsion cycle if and only if an odd number of elements $\cG(P_0)$ are represented as non-trivial summands. In particular, the individual elements of $\cG(P_0)$ determine the correction terms $V_s$. The expression $-2\max( \alpha(x), \beta(x)-s)$ is the maximal $\gr_{w}$-grading of an element of the form $\scU^m \scV^n x$ such that $m,n\ge 0$ and $x\in\scA_s$. Taking the minimum over all $x\in \cG(P_0)$ gives the result.
\end{proof}

We now pass to studying $V_s$ invariants of products of positive and negative multi-staircases. We begin with the following statement.
\begin{proposition}\label{prop:compute-correction}
Suppose that $\cP=(P_m\to \cdots \to P_0)$ and $\cQ=(Q_n\to \cdots \to Q_0)$ are two positive multi-staircases.
\begin{enumerate}
\item\label{part:1:compute} In general, 
  \[V_s(\Mor(\cP,\cQ))\ge V_s(\Hom_{\scR}(H_*(\cP),H_*(\cQ))=V_s(\Hom(P_0/\im(P_1), Q_0/\im(Q_1))).\]
\item If $\cQ$ is exact, then $V_s(\Mor(\cP,\cQ))= V_s(\Hom_{\scR}(H_*(\cP),H_*(\cQ)).$
\end{enumerate}
\end{proposition}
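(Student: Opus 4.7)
The plan is to compare the morphism complex $\Mor(\cP,\cQ)$ with the bigraded $\scR$-module $\Hom_{\scR}(H_*(\cP),H_*(\cQ))$ through the natural evaluation map
\[
\Psi\colon H_*(\Mor(\cP,\cQ)) \to \Hom_{\scR}(H_*(\cP),H_*(\cQ)), \qquad [\phi]\mapsto \phi_*,
\]
which is $\scR$-linear of bidegree $(0,0)$. First I would establish the second identity in part~(1). Using the grading structure of a positive multi-staircase together with Lemma~\ref{lem:basic-staircase-facts}, the $\scR$-non-torsion part of $H_*(\cP)$ is represented by the augmentation quotient $P_0/\im(P_1)$, and similarly for $\cQ$. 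Since any bigraded $\scR$-linear map must send $\scR$-torsion elements to $\scR$-torsion, its non-torsion content is entirely determined by its restriction $P_0/\im(P_1)\to Q_0/\im(Q_1)$. Because $V_s$ of a graded module is determined only by its non-torsion elements (Definition~\ref{def:V1} and Remark~\ref{rem:module}), this yields the claimed equality.

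For the inequality in part~(1), it suffices to show that $\Psi$ sends $\scR$-non-torsion classes to $\scR$-non-torsion homomorphisms. Since $\Psi$ preserves bigrading, this implies $d(\scA_s(\Mor(\cP,\cQ))) \le d(\scA_s(\Hom_{\scR}(H_*(\cP),H_*(\cQ))))$ for every $s$, and hence $V_s(\Mor(\cP,\cQ))\ge V_s(\Hom_{\scR}(H_*(\cP),H_*(\cQ)))$. For the key claim, I argue by contrapositive: if $\phi_*$ is $\scR$-torsion, say $\scU^i\scV^j\phi_*=0$ for some $i,j\ge 0$, then $\scU^i\scV^j\phi$ maps every cycle of $\cP$ into a boundary of $\cQ$. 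I would then construct a chain nullhomotopy for $\scU^{i'}\scV^{j'}\phi$ (possibly with larger $i',j'$) by induction on the filtration level of $\cP$, using freeness of each $P_k$ to lift the homotopy one level at a time.

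For part~(2), the exactness of $\cQ$ means that $\cQ$ is an $\scR$-free resolution of $Q_0/\im(Q_1)$. Given any non-torsion homogeneous element $\phi_0\in\Hom_{\scR}(P_0/\im(P_1),Q_0/\im(Q_1))$ of bidegree $(g_w,g_z)$, the standard comparison theorem for projective resolutions produces a chain map $\phi\colon\cP\to\cQ$ of the same bidegree whose induced map on $H_0$ equals $\phi_0$. Concretely, one first lifts $\phi_0$ to a map $P_0\to Q_0$ using projectivity of $P_0$ and surjectivity of $Q_0\twoheadrightarrow Q_0/\im(Q_1)$, then extends inductively over higher filtration levels using exactness of $\cQ$. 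The class $[\phi]\in H_*(\Mor(\cP,\cQ))$ is automatically $\scR$-non-torsion, because if $\scU^i\scV^j\phi=[\d,H]$ were a boundary then it would induce the zero map on $H_0$, contradicting $\scU^i\scV^j\phi_0\ne 0$. This produces the reverse inequality $d(\scA_s(\Mor(\cP,\cQ)))\ge d(\scA_s(\Hom_{\scR}(H_*(\cP),H_*(\cQ))))$, and combined with part~(1) yields equality.

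The main obstacle is the chain-nullhomotopy construction needed for the part~(1) inequality. A more conceptual alternative would be to set up a universal-coefficient-type spectral sequence with $E_2^{p,q}=\mathrm{Ext}_{\scR}^p(H_{-q}(\cP),H_*(\cQ))$ converging to $H_{p+q}(\Mor(\cP,\cQ))$, and then observe that for bigrading reasons the $\mathrm{Ext}^{\ge 1}$ contributions between modules arising from multi-staircases are $\scR$-torsion and hence invisible to $V_s$; however, a direct inductive argument along the staircase filtration of $\cP$ should suffice and is likely simpler to execute explicitly.
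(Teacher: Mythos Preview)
Your approach is essentially the same as the paper's: both arguments use the evaluation map $\Psi\colon H_*(\Mor(\cP,\cQ))\to \Hom_{\scR}(H_*(\cP),H_*(\cQ))$ for the inequality in part~(1), the filtration decomposition of $H_*(\cP)$ for the equality in part~(1), and the comparison theorem for projective resolutions for part~(2).

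The one substantive difference is how the ``non-torsion to non-torsion'' property of $\Psi$ is handled. The paper simply asserts it, relying on the unproved remark (just before the proposition) that a cycle $\phi$ is $\scR$-non-torsion in $\Mor$ if and only if it maps non-torsion cycles to non-torsion cycles. You correctly identify this as the crux and propose an inductive construction of a nullhomotopy for $\scU^{i'}\scV^{j'}\phi$. That works, but note that at each filtration level $k\ge 1$ you need to lift through $Q_{k+1}\to \ker(Q_k\to Q_{k-1})$, and in part~(1) $\cQ$ is \emph{not} assumed exact; the obstruction lives in $H_k(\cQ)$, which is $\scR$-torsion (since a positive multi-staircase localizes to $\scRi$), so enlarging the monomial at each step kills it. Your phrase ``possibly with larger $i',j'$'' suggests you see this, but the reason the obstructions are torsion should be made explicit. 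A cleaner route, which underlies the paper's assertion, is to observe that after inverting $\scU,\scV$ each positive staircase (hence each multi-staircase) is chain homotopy equivalent to $\scRi$; then $\Psi$ localizes to an isomorphism, so it automatically preserves and reflects non-torsion. This avoids the inductive lifting altogether.
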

\begin{proof}
 The inequality of part~\eqref{part:1:compute} follows since there is a grading preserving map of $\scR$ modules
\[
H_*\Mor_{\scR}(\cP,\cQ)\to \Hom_{\scR}(P_0/\im(P_1), Q_0/\im(Q_1)),
\]
which sends $\scR$-non-torsion elements to $\scR$-non-torsion elements.   The equality in part~\eqref{part:1:compute} follows since $H_*(\cP)$ decomposes as a direct sum 
\[
\bigoplus_{s=0}^n \big(\ker(P_i\to P_{i-1})/\im (P_{i+1}\to P_i)\big),
\]
and $P_0/\im(P_{1}\to P_0)$ is the only summand which contains $\scR$-non-torsion elements.

We now consider the second claim. Suppose that $\cQ$ is exact. We will show
\begin{equation}
V_s(\Hom(P_0/\im(P_1), Q_0/\im(Q_1)))\ge V_s(\Mor(\cP,\cQ)). \label{eq:inequality-Q-exact}
\end{equation}
Suppose $\phi\colon P_0/\im(P_1)\to Q_0/\im(Q_1)$ is an $\scR$-module homomorphism which maps $\scR$-non-torsion elements to $\scR$-non-torsion elements. It suffices to extend $\phi$ to obtain a commutative diagram
\[
\begin{tikzcd}
P_m\ar[r]
& \cdots \ar[r]
&P_2\ar[r] \ar[d, dashed, "\phi_2"]
& P_1\ar[r]  \ar[d, dashed, "\phi_1"]
& P_0\ar[r,twoheadrightarrow] \ar[d, dashed, "\phi_0"]
& P_0/\im(P_1) \ar[d, "\phi"]
\\
&\cdots\ar[r]
 &Q_2\ar[r]
 &Q_1\ar[r]&Q_0\ar[r, twoheadrightarrow]& Q_0/\im(Q_1)
\end{tikzcd}
\]
The construction of the maps $\phi_i$ follows from the same procedure as in \cite[Theorem~2.2.6 and the discussion below it]{Weibel}. 
We briefly summarize the construction. The map $\phi_0$ may be chosen since $P_0$ is free, and hence projective, and $Q_0\to Q_0/\im(Q_1)$ is surjective. Having constructed $\phi_0$, we next construct $\phi_1$. Using exactness of $\cQ$, we may factor $\phi_0\circ (P_1\to P_0)$ into $\im(Q_1\to Q_0)$. Using the fact that $P_1$ is projective and $Q_1\to \im(Q_1\to Q_0)$ is surjective, we obtain a map $\phi_1$. We repeat this process until we exhaust $\cP$. This gives ~\eqref{eq:inequality-Q-exact}, completing the proof.
\end{proof}

\begin{proposition}\label{prop:main-computation-V-s}
 Suppose that $\cN=(N_0\to \cdots \to  N_n)$ is a negative multi-staircase, and $\cP=(P_m\to\cdots \to  P_0)$ is a positive multi-staircase. Write $\cG(P_i)$ for the generators of $P_i$, and similarly for $\cG(N_i)$.
 \begin{enumerate}
 \item In general 
   \begin{equation}
V_s(\cN\otimes \cP)\ge -\frac{1}{2} \min_{x\in \cG(N_0)} \max_{y\in \cG(P_0)} \min\big(\gr_{w}(x)+\gr_{w}(y), \gr_{z}(x)+\gr_{z}(y)+2s\big). \label{eq:V_s-inequality-more-concrete}
\end{equation}
\item  If $\cP=(P_1\to P_0)$ is a positive staircase, then~\eqref{eq:V_s-inequality-more-concrete} is an equality.
\end{enumerate}
\end{proposition}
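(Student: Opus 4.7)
The plan is to exploit the identification $\cN \otimes \cP \cong \Mor_{\scR}(\cN^\vee, \cP)$, where $\cN^\vee$ is a positive multi-staircase, and then to apply Proposition~\ref{prop:compute-correction}.

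\textbf{Part (1).} By Proposition~\ref{prop:compute-correction}(1), it suffices to bound $V_s(\Hom_{\scR}(H_*(\cN^\vee), H_*(\cP)))$ from below. The target module $H_*(\cP)/\Tors$ is generated over $\scR$ by classes $[y]$ for $y \in \cG(P_0)$, at bigradings $(\gr_w(y),\gr_z(y))$, subject to staircase relations. Dually, $H_*(\cN^\vee)/\Tors$ has generators indexed by $\cG(N_0)$, at bigradings $(-\gr_w(x),-\gr_z(x))$, presented by relations of the form $\scU^a[x_1]+\scV^b[x_2]=0$ which together connect every pair in $\cG(N_0)$. Let $\phi$ be a nonzero bigraded $\scR$-homomorphism of bigrading $(d_w,d_z)$ with $d_w-d_z=2s$. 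The first key point is that any staircase relation $\scU^a[x_1]+\scV^b[x_2]=0$ in the source translates to $\scU^a\phi([x_1])=\scV^b\phi([x_2])$ in the target; since $H_*(\cP)/\Tors$ is $\scR$-torsion-free, $\phi([x_1])=0$ forces $\phi([x_2])=0$. By connectivity of the staircase relations, $\phi\neq 0$ implies $\phi([x])\neq 0$ for \emph{every} $x\in \cG(N_0)$. The second key point is that any nonzero homogeneous element of $H_*(\cP)/\Tors$ at bigrading $(g_w,g_z)$ must contain a nonzero monomial summand $\scU^m\scV^n[y]$ with $y\in \cG(P_0)$ and $m,n\ge 0$, which forces $\gr_w(y)\ge g_w$ and $\gr_z(y)\ge g_z$. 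Applying this to $\phi([x])$, which has bigrading $(d_w-\gr_w(x), d_z-\gr_z(x))$, yields for every $x$ the existence of $y \in \cG(P_0)$ with
\[
d_w \le \min(\gr_w(x)+\gr_w(y),\ \gr_z(x)+\gr_z(y)+2s).
\]
Taking the best $y$ for each $x$ and then the worst $x$ gives the claimed bound.

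\textbf{Part (2).} When $\cP=(P_1\to P_0)$ is a single positive staircase, Lemma~\ref{lem:exact}(1) says $\cP$ is exact, so Proposition~\ref{prop:compute-correction}(2) upgrades the inequality above to
\[
V_s(\cN \otimes \cP) = V_s(\Hom_{\scR}(H_*(\cN^\vee), H_*(\cP))).
\]
For the reverse inequality, pick $x^*\in \cG(N_0)$ realizing the outer $\min$ and $y^*\in\cG(P_0)$ realizing the corresponding $\max$, and set $d_w := \min(\gr_w(x^*)+\gr_w(y^*),\ \gr_z(x^*)+\gr_z(y^*)+2s)$. The idea is to construct a nonzero module map $\phi$ of bigrading $(d_w, d_w-2s)$ by setting $\phi([x^*])=\scU^m\scV^n[y^*]$ for the unique nonnegative $m,n$ matching the target bigrading, and extending to the remaining generators $[x]$ by propagating along the staircase relations of $H_*(\cN^\vee)$. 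Because $\cP$ has only two filtration levels, $H_*(\cP)/\Tors$ is itself a staircase module, and its shape accommodates the propagation so that the required images exist at each prescribed bigrading.

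The main obstacle is the consistency of the construction in part (2): one must verify that propagating $\phi$ across every staircase relation of the source lands in an element of $H_*(\cP)/\Tors$ that actually exists at the required bigrading, rather than forcing a contradiction. This is precisely where the single-staircase hypothesis on $\cP$ is essential; the counterexample in Subsection~\ref{sub:counter} exhibits a negative multi-staircase $\cN$ and a tensor product $\cP$ of two positive staircases for which the analogous equality fails, so the strategy above cannot be pushed further.
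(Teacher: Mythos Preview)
Your Part~(1) is correct and matches the paper's approach closely: both dualize to $\Mor(\cN^\vee,\cP)$ and extract the grading bound by observing that a non-torsion $\phi$ must send every generator $x^\vee$ to something with a nontrivial $P_0$-component. Your connectivity argument (nonzero on one $[x]$ implies nonzero on all) is a nice way to phrase what the paper does by invoking that $\scR$-non-torsion morphisms send non-torsion cycles to non-torsion cycles.

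Part~(2), however, has a genuine gap that you yourself flag. Your propagation scheme starts from a single $\phi([x^*])$ and attempts to extend along relations, but you never verify that each propagation step lands in an actual element of $H_*(\cP)$, nor that propagation around cycles (which exist when $\cN^\vee$ is a genuine multi-staircase) is consistent. Saying ``its shape accommodates the propagation'' is not a proof.

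The paper closes this gap by a different and cleaner mechanism. Rather than propagating from one seed, it defines $\phi_0$ on \emph{every} generator $x^\vee\in\cG(N_0^\vee)$ simultaneously: since $x^*$ realizes the outer minimum in the minimax, for \emph{each} $x$ there exists a $y_x\in\cG(P_0)$ with $\gr_w(y_x)+\gr_w(x)\ge d$ and $\gr_z(y_x)+\gr_z(x)\ge d-2s$, so one may set $\phi_0(x^\vee)=f_x\cdot y_x$ for the appropriate monomial $f_x$. Consistency is then checked \emph{a posteriori}: for any $y_1\in N_1^\vee$, tensoring the maps of Lemma~\ref{lem:basic-staircase-facts}(2) gives a chain map $\cN^\vee\to\scR$ which is nonzero on each $N_0^\vee$-generator and vanishes on $N_1^\vee$; applying it to $\d y_1$ shows the number of nonzero terms is even. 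Hence $\phi_0(\d y_1)$ is a homogeneous sum of an even number of monomial-times-generator terms in $P_0$, so its image under the analogous map $\cP\to\scR$ vanishes, meaning $[\phi_0(\d y_1)]$ is $\scR$-torsion in $H_*(\cP)$. Since $\cP$ is a single staircase, $H_*(\cP)$ is torsion-free (Lemma~\ref{lem:basic-staircase-facts}(1)), so $[\phi_0(\d y_1)]=0$ and $\phi_0$ descends. This parity-plus-torsion-freeness argument is the missing idea in your sketch.

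Your propagation approach \emph{can} be salvaged with the observation that $H_*(\cP)$, for a single staircase, has at most one nonzero element at each bigrading; combined with the existence of $f_x\cdot[y_x]$ at the target bigrading for every $x$ (same minimax fact the paper uses), this forces both existence and consistency of the propagated values. But you did not supply this argument.
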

\begin{proof} We dualize, and consider the isomorphism $\cN\otimes \cP\cong \Mor(\cN^\vee, \cP)$. For the first claim, suppose  $\phi\in \Mor(\cN^\vee,\cP)$ is an $\scR$-non-torsion cycle which is of homogeneous grading $(d,d-2s)$, where $d=d(\scA_s(\Mor(\cN^\vee, \cP))$. Note $\phi\in \scA_s(\Mor(\cN^\vee,\cP))$. For each $x^\vee\in \cG(N_0^\vee)$, $\phi(x^\vee)$ is a $\scR$-non-torsion cycle, and hence must contain a summand of the form $f\cdot y$, for some non-zero monomial $f\in \scR$ and $y\in \cG(P_0)$. By the definition of the grading of a morphism, we have
\[
\gr_{w}(y)-\gr_{w}(x^\vee)+\gr_{w}(f)=d\quad \text{and} \quad  \gr_{z}(y)-\gr_{z}(x^\vee)+\gr_{z}(f)=d-2s.
\]
Since $\gr_{w}(f)\le 0$ and $\gr_{z}(f)\le 0$, and $(\gr_{w}(x^\vee),\gr_{z}(x^\vee)=(-\gr_{w}(x), -\gr_{z}(x))$, we have that for each $x$
\[
d(\scA_s(\Mor(\cN^\vee, \cP))\le \max_{y\in \cG(P_0)} \min (\gr_{w}(x)+\gr_{w}(y), \gr_{z}(x)+\gr_{z}(y)+2s).
\]
Taking the minimum over $x\in \cG(N_0)$ gives the statement.

We now consider the second claim. Suppose that $\cP=(P_1\to P_0)$ is a positive staircase.
Using Lemma~\ref{lem:exact} and Proposition~\ref{prop:compute-correction}, we know that
\[
  V_s( \cN\otimes \cP)=V_s(\Hom_{\scR}(N_0^\vee/\im(N_1^\vee),P_0/\im (P_1))).
\] Fix $s\ge 0$. Let $\delta_s$ denote the right-hand side of~\eqref{eq:V_s-inequality-more-concrete}, without the factor of $-1/2$. For each $x^\vee$ in $\cG(N_0^\vee)$, we pick a $y_x\in \cG(P_0)$ so that 
\[
\gr_{w}(y_x)-\gr_{w}(x^\vee)\ge d\quad \text{and} \quad \gr_{z}(y_x)-\gr_{z}(x^\vee)\ge d-2s.
\]
We set $\phi_0\colon N_0^\vee\to P_0$ to be the map which takes $x^\vee$ to $f_x\cdot  y_x$, where $f_x\in \scR$ is the unique monomial so that $\phi_0$ has bigrading $(d,d-2s)$. By composition, we obtain a map $\phi'\colon N_0^\vee\to P_0/\im(P_1)$. 

\emph{Claim.} The map $\phi'$ vanishes on $\im(N_1^\vee)$. 

Given the claim, we quickly conclude the proof. In fact, we obtain a map $\phi$ from $N_0/\im(N_1)$ to $P_1/\im(P_0)$. Hence, we may use the second part of Proposition~\ref{prop:compute-correction} to conclude that
\[
d(\scA_s(\Mor(\cN^\vee,\cP)))\ge \delta_s,
\]
which completes the proof modulo the claim. 

It remains to prove the claim. Let $y_1\in N_1^\vee$. We consider the element $v=\d (y_1)\in N_0^\vee$. We can write $v$ as a sum $\sum_{x^\vee\in \cG(N_0^\vee)} f_{x} \cdot x^\vee$, where each $f_x$ is a monomial. Tensoring the maps from the second part of Lemma~\ref{lem:basic-staircase-facts}, we obtain a chain map from $\cN^\vee$ to $\scR$, which is non-zero only on $N_0^\vee$, and furthermore maps each generator of $N_0^\vee$ to a monomial. Using the fact that this map is a chain map, we see that the number of $x^\vee\in \cG(N_0^\vee)$ where $f_x$ is non-zero is even. It follows immediately that $\phi_0(v)$ is an $\scR$-torsion cycle. By Lemma~\ref{lem:basic-staircase-facts}, $H_*(\cP)$ is torsion free, so it follows that $[\phi_0(v)]=0\in H_*(\cP)=P_0/\im(P_1)$. This proves the claim and completes the proof of Proposition~\ref{prop:main-computation-V-s}.
\end{proof}

\subsection{A counterexample}\label{sub:counter}

We give an example indictating that the second statement of Proposition~\ref{prop:main-computation-V-s} need not hold if $\cP$ is a product of more than one positive staircase, even if $\cP$ is exact.

Let $\cP^1$, $\cP^2$ be the staircases of torus knots $T(6,7)$ and $T(4,5)$, respectively. As described in Subsection~\ref{sub:lspace},
the generators of $\cP^1$ are at bigradings $(-30,0)$, $(-30,-2)$, $(-20,-2)$, $(-20,-6)$, $(-12,-6)$, $(-12,-12)$, $(-6,-12)$, $(-6,-20)$, $(-2,-20)$, $(-2,-30)$, $(0,-30)$. 
We denote these generators by $a_0,\dots,a_{10}$. We have $\partial a_{2i}=0$ and $\partial a_{2i+1}=\scU^{\alpha_i} a_{2i+2}+\scV^{\beta_i} a_{2i}$,
where $\alpha_{i},\beta_i$ are non-negative integers determined by the condition that $\partial$ preserve the grading. In particular, the generators
with odd index generate $\cP^1_1$, while the generators with even index span $\cP^1_0$.

Likewise, there are generators $b_0,\dots,b_6$ for $\cP^2$ with bigradings $(-12,0)$, $(-12,-2)$, $(-6,-2)$, $(-6,-6)$, $(-2,-6)$, $(-2,-12)$, $(0,-12)$.
\begin{lemma}\label{lem:new_old_lem}
  Let $\cP=\cP^1\otimes\cP^2$. The only elements $x$ in $\cP$ such that $\gr_w(x)=\gr_z(x)>-18$ are linear combinations of
  $\scU^i\scV^j a_4\otimes b_4$ 
  with $(i,j)=(0,1),(1,2)$ and $\scU^{i'}\scV^{j'} a_6\otimes b_2$ with $(i',j')=(1,0),(2,1)$.
\end{lemma}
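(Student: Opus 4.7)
The plan is to reduce the claim to a finite enumeration on the bigraded basis of $\cP$. Since $\cP = \cP^1 \otimes \cP^2$ is a free $\scR$-module with basis $\{a_k \otimes b_l\}_{0 \le k \le 10,\, 0 \le l \le 6}$, the monomials $\scU^i \scV^j (a_k \otimes b_l)$ with $i, j \ge 0$ form an $\F_2$-basis of $\cP$, and each such monomial is homogeneous with respect to the $(\gr_w,\gr_z)$-bigrading. Distinct basis elements at the same bigrading are linearly independent, so an element $x$ with $\gr_w(x) = \gr_z(x) > -18$ must be an $\F_2$-linear combination of basis monomials having exactly this common bigrading. It therefore suffices to enumerate such monomials.

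Writing $(W_{kl}, Z_{kl}) = (\gr_w(a_k)+\gr_w(b_l),\, \gr_z(a_k)+\gr_z(b_l))$, the monomial $\scU^i \scV^j (a_k \otimes b_l)$ has $\gr_w = \gr_z = g$ with $g > -18$ precisely when $g = W_{kl} - 2i = Z_{kl} - 2j$ for some $i, j \ge 0$. This forces the parity condition $W_{kl} \equiv Z_{kl} \pmod 2$ together with the inequality $\min(W_{kl}, Z_{kl}) > -18$. Using the bigradings of the $a_k$ and $b_l$ tabulated just before the lemma, I would check pair by pair that the inequality $\min(W_{kl}, Z_{kl}) > -18$ holds only for $(k, l) = (4, 4)$, yielding $(W, Z) = (-14, -12)$, and for $(k, l) = (6, 2)$, yielding $(W, Z) = (-12, -14)$. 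Parity is automatic in both cases.

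For each of the two surviving pairs, the values $g \in \{-14, -16\}$ are the only integers strictly greater than $-18$ that are at most $\min(W_{kl}, Z_{kl})$ and have the right parity, and each value of $g$ determines $(i, j)$ uniquely. For $(k, l) = (4, 4)$ this gives $(i, j) = (0, 1)$ and $(1, 2)$, and for $(k, l) = (6, 2)$ it gives $(i, j) = (1, 0)$ and $(2, 1)$. These are exactly the four monomials listed in the statement, so every element with the required property is an $\F_2$-linear combination of them.

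The only real obstacle is keeping the bookkeeping in the case check on the $77$ pairs $(k, l)$ under control. To make this systematic, I would fix each $b_l$ and intersect the range of $k$ satisfying $\gr_w(a_k) > -18 - \gr_w(b_l)$ with the range satisfying $\gr_z(a_k) > -18 - \gr_z(b_l)$; since both grading sequences of the $a_k$ are monotone, each of these ranges is a contiguous interval, and a brief inspection shows that all intersections are empty except those producing $a_4$ with $b_4$ and $a_6$ with $b_2$.
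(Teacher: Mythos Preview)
Your proposal is correct and is precisely the ``direct inspection'' that the paper invokes as its entire proof; you have simply written out the enumeration in full, organizing the $77$ pairs via the monotone behavior of the $\gr_w$ and $\gr_z$ sequences to reduce the check to intersecting contiguous index ranges. There is nothing to add.
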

\begin{proof}
  Direct inspection.
\end{proof}

Let now $\cN$ be the negative staircase complex of the mirror of the trefoil. It is generated by elements $c_0,c_1,c_2$
at bigradings $(2,0),(2,2),(0,2)$, respectively. The differential is $\partial c_0=\scV c_1$, $\partial c_2=\scU c_1$, $\partial c_1=0$. That is,
$c_0,c_2\in\cN_0$, $c_1\in\cN_{-1}$.

\begin{lemma}
  There is no cycle $z\in \scA_0(\cN\otimes\cP)$ such that $\gr_w(z)\ge -12$ and $z\neq 0$.
\end{lemma}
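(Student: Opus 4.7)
The plan is to establish the stronger statement that every cycle $z\in\scA_0(\cN\otimes\cP)$ with $\gr_w(z)\geq -12$ is a boundary inside $\scA_0$ (so ``$z\neq 0$'' in the lemma is read as ``$[z]\neq 0$ in $H_*(\scA_0)$''); this yields $d(\scA_0)\leq -14$ and hence $V_0(\cN\otimes\cP)\geq 7$, strictly beating the bound of Proposition~\ref{prop:main-computation-V-s}, which is the point of the subsection.

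First I would use the $\scR$-basis $\{c_0,c_1,c_2\}$ of $\cN$ to write
\[
z=c_0\otimes p_0+c_1\otimes p_1+c_2\otimes p_2
\]
uniquely, with $p_0,p_1,p_2\in\cP$. From the bigradings $(2,0),(2,2),(0,2)$ of $c_0,c_1,c_2$, membership $z\in\scA_0$ forces $\gr_w(p_0)-\gr_z(p_0)=-2$, $\gr_w(p_1)=\gr_z(p_1)$, and $\gr_w(p_2)-\gr_z(p_2)=2$, while $\gr_w(z)\geq -12$ translates to $\gr_w(p_0)\geq -14$, $\gr_w(p_1)\geq -14$, and $\gr_w(p_2)\geq -12$.

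Next I would classify the elements of $\cP=\cP^1\otimes\cP^2$ in these three strips. Lemma~\ref{lem:new_old_lem} handles the $\gr_w=\gr_z$ strip directly: in the range $\gr_w\geq -14$ only $\scV(a_4\otimes b_4)$ and $\scU(a_6\otimes b_2)$ occur, both at bigrading $(-14,-14)$. For the two off-diagonals $\gr_w-\gr_z=\pm 2$, the constraint $\gr_w\geq -14$ (resp.\ $\geq -12$) together with the listed bigradings of the $a_m$ and $b_n$ restricts $(m,n)$ to a very short list, and a direct check in each case leaves only multiples of $a_4\otimes b_4$ (resp.\ of $a_6\otimes b_2$). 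Thus
\[
p_0=\lambda\, a_4\otimes b_4,\quad p_1=\mu_1\scV(a_4\otimes b_4)+\mu_2\scU(a_6\otimes b_2),\quad p_2=\nu\, a_6\otimes b_2
\]
for some scalars $\lambda,\mu_1,\mu_2,\nu\in\bF$.

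Finally I would impose the cycle condition. Since every $a_{2i}$ and $b_{2j}$ is a cycle and $\scU,\scV$ commute with $\partial$, one has $\partial p_0=\partial p_1=\partial p_2=0$ automatically; combined with $\partial c_1=0$, $\partial c_0=\scV c_1$, $\partial c_2=\scU c_1$, the equation $\partial z=0$ collapses to the $c_1$-coefficient $\scV p_0+\scU p_2=0$, i.e.\ $\lambda\scV(a_4\otimes b_4)+\nu\scU(a_6\otimes b_2)=0$. These are distinct basis monomials of the free $\scR$-module $\cP$, forcing $\lambda=\nu=0$ and leaving $z=c_1\otimes p_1$. Setting $w=\mu_1(c_0\otimes a_4\otimes b_4)+\mu_2(c_2\otimes a_6\otimes b_2)$, each summand sits at bigrading $(-12,-12)$ so $w\in\scA_0$, and a direct computation using $\partial c_0=\scV c_1$ and $\partial c_2=\scU c_1$ gives $\partial w=z$, exhibiting $z$ as a boundary. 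The main obstacle is the enumeration in the second step: Lemma~\ref{lem:new_old_lem} does the heavy lifting for the $\gr_w=\gr_z$ diagonal, while the two side diagonals $\gr_w-\gr_z=\pm 2$ reduce to short but careful finite searches among the basis monomials $a_m\otimes b_n$.
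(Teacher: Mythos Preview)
Your proof is correct, and in fact patches a genuine oversight in the paper's own argument. The paper reads ``$z\neq 0$'' literally (as a nonzero element of the chain complex) and asserts that any such $z$ must lie in the $\bF$-span of $a_4\otimes b_4\otimes c_0$ and $a_6\otimes b_2\otimes c_2$. But this overlooks the $c_1$-terms: both $c_1\otimes\scV(a_4\otimes b_4)$ and $c_1\otimes\scU(a_6\otimes b_2)$ lie in $\scA_0$ with $\gr_w=-12$ and are cycles (since $c_1$, $a_4$, $b_4$, $a_6$, $b_2$ are all cycles). Thus the lemma, taken literally, is false; your reinterpretation ``$[z]\neq 0$ in $H_*(\scA_0)$'' is the correct one for the intended corollary $V_0(\cN\otimes\cP)\ge 7$.

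Your decomposition $z=c_0\otimes p_0+c_1\otimes p_1+c_2\otimes p_2$ and the cycle analysis forcing $\lambda=\nu=0$ follow the same logic as the paper, and your additional step---exhibiting $w=\mu_1(c_0\otimes a_4\otimes b_4)+\mu_2(c_2\otimes a_6\otimes b_2)\in\scA_0$ with $\partial w=c_1\otimes p_1$---is exactly what is needed to finish. The off-diagonal enumerations (only $a_4\otimes b_4$ on the strip $\gr_w-\gr_z=-2$ with $\gr_w\ge -14$, only $a_6\otimes b_2$ on $\gr_w-\gr_z=2$ with $\gr_w\ge -12$) are correct; since this is precisely the spot where the paper's terse appeal to Lemma~\ref{lem:new_old_lem} is insufficient, you should record the short case check explicitly.
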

\begin{proof}
  Any such cycle would be a linear combination of elements of type $\scU^i\scV^j \cdot a_k\otimes b_\ell\otimes c_m$. By Lemma~\ref{lem:new_old_lem},
  unless $(k,\ell)=(4,4)$ or $(6,2)$, the $\gr_w$-grading of such combination is at most $-14$. Hence, 
  if $z\in \scA_0(\cN\otimes\cP)$ and $z\neq 0$ has $\gr_w(z)\ge -12$, then $z$ has to be a linear combination of elements of the two-element set
  \[ a_4\otimes b_4\otimes c_0, a_6\otimes b_2\otimes c_2.\]
  But then, $z$ is not a cycle.
\end{proof}
\begin{corollary}
 We have $V_0(\cN\otimes \cP)\ge 7$.
\end{corollary}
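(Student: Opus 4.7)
The plan is to read the corollary off directly from the preceding lemma together with Definition~\ref{def:V1}. Recall that by that definition, $V_0(\cN\otimes\cP) = -\tfrac{1}{2}d(\scA_0(\cN\otimes\cP))$, and $d(\scA_0(\cN\otimes\cP))$ is by definition the maximal $\gr_w$-grading of a homogeneously graded, $\bF[U]$-non-torsion element of $H_*(\scA_0(\cN\otimes\cP))$. Any such element is represented by a non-zero homogeneous cycle in $\scA_0(\cN\otimes\cP)$, so it suffices to bound the $\gr_w$-grading of non-zero cycles from above by $-14$.

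The first step is a parity observation: every generator $a_k\otimes b_\ell\otimes c_m$ of $\cN\otimes\cP$ has even bigrading, since the bigradings of the $a_i$ and $b_j$ listed above, together with the bigradings $(2,0)$, $(2,2)$, $(0,2)$ of $c_0,c_1,c_2$, all lie in $2\Z\times 2\Z$, and multiplication by $\scU$ or $\scV$ shifts $\gr_w$ by even integers. Consequently every homogeneously graded element of $\cN\otimes\cP$ has even $\gr_w$-grading.

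The second step is to combine this with the preceding lemma, which asserts that no non-zero cycle $z\in\scA_0(\cN\otimes\cP)$ satisfies $\gr_w(z)\ge -12$. Parity then promotes this to $\gr_w(z)\le -14$ for every non-zero cycle, hence for every non-zero homology class, hence for every $\bF[U]$-non-torsion class. Therefore $d(\scA_0(\cN\otimes\cP))\le -14$, and $V_0(\cN\otimes\cP)\ge 7$ follows. There is no genuine obstacle: the entire content of the corollary is packaged in the lemma, and the only additional ingredient is the elementary parity check above.
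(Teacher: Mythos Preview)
Your proof is correct and is precisely the natural way to deduce the corollary from the preceding lemma; the paper gives no explicit argument and treats the corollary as immediate. Your parity observation is the one nontrivial ingredient needed to upgrade ``no cycle with $\gr_w\ge -12$'' (which a priori only yields $V_0>6$) to $d(\scA_0)\le -14$ and hence $V_0\ge 7$, and it is exactly what the paper is tacitly using.
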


The following result shows that the right-hand side of
\eqref{eq:V_s-inequality-more-concrete} is strictly smaller than 7.
\begin{lemma}
  The expression 
  \[-\frac12\min_{x\in G(\cN_0)}\max_{y\in G(\cP_0)}\min(\gr_w(x)+\gr_w(y),\gr_z(x)+\gr_z(y))\] 
  is equal to $6$.
\end{lemma}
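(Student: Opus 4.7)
The plan is a direct computation of the minimax over an explicit finite set. The set $G(\cN_0)=\{c_0,c_2\}$ has two elements with bigradings $(2,0)$ and $(0,2)$. The set $G(\cP_0)$ consists of the $24$ tensors $a_{2i}\otimes b_{2j}$ with $0\le i\le 5$ and $0\le j\le 3$, whose bigradings are the coordinatewise sums of the bigradings of $a_{2i}$ and $b_{2j}$ listed in the setup.

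First I would invoke the $(\gr_w\leftrightarrow\gr_z)$-symmetry of the problem. Inspecting the listed bigradings, the involutions $a_{2i}\leftrightarrow a_{10-2i}$ on $\cP^1$, $b_{2j}\leftrightarrow b_{6-2j}$ on $\cP^2$, and $c_0\leftrightarrow c_2$ on $\cN$ each swap the two gradings. Since the quantity
\[
\min\bigl(\gr_w(x)+\gr_w(y),\ \gr_z(x)+\gr_z(y)\bigr)
\]
is invariant under simultaneously applying these symmetries, we get
\[
\max_{y\in G(\cP_0)}\min\bigl(\gr_w(c_0)+\gr_w(y),\gr_z(c_0)+\gr_z(y)\bigr)=\max_{y\in G(\cP_0)}\min\bigl(\gr_w(c_2)+\gr_w(y),\gr_z(c_2)+\gr_z(y)\bigr),
\]
so it suffices to compute the inner max for $x=c_0$ alone.

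Second, I would tabulate $\varphi(y):=\min(2+\gr_w(y),\gr_z(y))$ over the $24$ generators. The candidate maximizer is $y=a_4\otimes b_4$, of bigrading $(-14,-12)$, giving $\varphi(y)=\min(-12,-12)=-12$. Any increase of $i$ beyond $2$ sharply decreases $\gr_z(a_{2i})$ (the jumps are $-6\to -12\to -20\to -30$), pushing the second coordinate below $-12$; any decrease of $i$ below $2$ sharply decreases $\gr_w(a_{2i})$, pushing $2+\gr_w(y)$ below $-12$; and the same monotonicity governs the $j$-direction for $b_{2j}$. A scan of the $6\times 4$ table confirms that no other pair $(i,j)$ achieves $\varphi(y)>-12$.

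Combining the two steps yields $\min_x\max_y\min(\cdots)=-12$, and multiplying by $-\tfrac12$ gives the asserted value of $6$. The only potential obstacle is mechanical: organizing the finite check so that one does not have to inspect all $24$ entries one-by-one. Using the symmetry to reduce to one choice of $x$, and then using monotonicity of the staircase sequences to argue that the optimum must lie near the ``balanced'' index $(i,j)=(2,2)$, keeps the verification short.
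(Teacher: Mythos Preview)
Your proof is correct and takes essentially the same approach as the paper: a direct finite computation showing the inner maximum is $-12$, attained at $y=a_4\otimes b_4$ for $x=c_0$ (and at $y=a_6\otimes b_2$ for $x=c_2$). The only difference is that you invoke the $(\gr_w\leftrightarrow\gr_z)$-symmetry to reduce to a single choice of $x$, whereas the paper simply records both cases; this is a harmless and sensible shortcut.
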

\begin{proof}
  For $x=c_0$, the expression
  \begin{equation}
    \label{eq:new_expr}
\max_{y\in G(\cP_0)}\min(\gr_w(x)+\gr_w(y),\gr_z(x)+\gr_z(y))
\end{equation}
is equal to $-12$ with the equality attained at $y=a_4\otimes b_4$. For $x=c_2$, \eqref{eq:new_expr}
attains its maximal value $-12$ for $y=a_6\otimes b_2$.
  
\end{proof}

\subsection{More on the $V_s$-invariants of tensor products of staircases}
\label{sub:mutli_v}

In this subsection, we highlight some special cases of Proposition~\ref{prop:vj_pos} and   Proposition~\ref{prop:main-computation-V-s} which will be useful for our purposes.

\begin{corollary}\label{cor:cor_step}
  Suppose $\cC$ is a positive multi-staircase, and for $i\in \{1,\dots, r\}$, let $\cS^{n_i}$ denote the staircase complex of
  Definition~\ref{def:super_basic} with $\sum n_i\ge 0$. Then
  \[
  V_s(\cC\otimes \cS^{n_1}\otimes \cdots \otimes \cS^{n_r})=\min_{0\le j\le \sum n_r} \left(V_{s+2j-\sum n_i}(\cC)+j\right) .
  \]
\end{corollary}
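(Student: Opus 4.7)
The plan is to reduce to a single positive staircase factor via local equivalence, then apply the explicit formula of Proposition~\ref{prop:vj_pos}. First I would set $N=\sum_{i=1}^r n_i\ge 0$ and iterate Proposition~\ref{prop:local_equivalence} to see that $\cS^{n_1}\otimes\cdots\otimes \cS^{n_r}$ is locally equivalent to $\cS^N$ (with the convention $\cS^0=\scR$ when $N=0$). Since local equivalence is preserved under tensoring with $\cC$ by Proposition~\ref{prop:local_preserves}(b), and $V_s$ is an invariant of local equivalence by Proposition~\ref{prop:local_preserves}(a), this reduces the claim to the identity
\[
V_s(\cC\otimes \cS^N) = \min_{0\le j\le N} \bigl(V_{s+2j-N}(\cC)+j\bigr).
\]

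Next I would observe that $\cC\otimes \cS^N$ is itself a positive multi-staircase, so Proposition~\ref{prop:vj_pos} applies directly. The generators of its bottom filtration level are the tensors $x\otimes \xs_{2i}$ with $x\in \cG(\cC_0)$ and $0\le i\le N$. Reading the bigradings from Definition~\ref{def:super_basic} yields $\alpha(\xs_{2i})=i$ and $\beta(\xs_{2i})=N-i$, and hence
\[
V_s(\cC\otimes \cS^N) = \min_{\substack{x\in \cG(\cC_0)\\ 0\le i\le N}}\max\bigl(\alpha(x)+i,\ \beta(x)+N-i-s\bigr).
\]

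To finish, I would interchange the two minima, fix $i$, and pull $i$ out of the outer $\max$; the resulting inner optimization reads $\min_{x}\max(\alpha(x),\beta(x)-(s+2i-N))$, which by Proposition~\ref{prop:vj_pos} applied in reverse equals $V_{s+2i-N}(\cC)$. Relabeling the dummy index $i$ as $j$ produces exactly the target formula.

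I do not anticipate any serious obstacle: the key ingredients are all in place, and the only substantive step is the elementary algebraic manipulation above. Minor care is needed for the edge case $N=0$ (where the identity reduces to $V_s(\cC)=V_s(\cC)$) and for mixed signs among the $n_i$, both of which are cleanly absorbed into the local equivalence step.
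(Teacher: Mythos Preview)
Your proposal is correct and follows essentially the same approach as the paper: reduce via Propositions~\ref{prop:local_equivalence} and~\ref{prop:local_preserves} to a single factor $\cS^N$, apply Proposition~\ref{prop:vj_pos}, read off $\alpha(\xs_{2i})=i$, $\beta(\xs_{2i})=N-i$, and then pull $i$ out of the $\max$ to recognize $V_{s+2i-N}(\cC)$. Your explicit handling of the edge cases $N=0$ and mixed signs among the $n_i$ is a nice touch that the paper leaves implicit.
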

\begin{proof}
  By Proposition~\ref{prop:local_equivalence}, we know that $\cS^{n_1}\otimes \cdots \otimes \cS^{n_r}$ is locally equivalent to $\cS^{n}$, where $n=\sum n_i$, so by Proposition~\ref{prop:local_preserves} it suffices to prove the result when $i=1$.  Write $x_1,\dots, x_m$ for the generators of $C_0$, and write $y_0,\dots, y_{n}$ for the generators of $\cS^{n}_0$. Then $x_i\otimes y_j$ forms a basis of homogeneously graded elements of $(\cC\otimes \cS^n)_0$. By Proposition~\ref{prop:vj_pos}, we have
\[
V_s(\cC\otimes \cS^n)=\min_{\substack{1\le i\le m\\ 0\le j\le n}} \max(\alpha(x_i)+\alpha(y_j), \beta(x_i)+\beta(x_j)-s).
\]
We note that $\alpha(y_j)=j$ and $\beta(y_j)=n-j$, so we conclude that
\[
\begin{split}
V_s(\cC\otimes \cS^n)&=\min_{\substack{1\le i\le m\\ 0\le j\le n}} \max(\alpha(x_i)+j, \beta(x_i)+n-j-s)\\
&=\min_{0\le j\le n} \min_{1\le i\le m}\left(\max(\alpha(x_i), \beta(x_i)+n-2j-s) +j\right)\\
&=\min_{0\le j\le n} \left(V_{s+2j-n}(\cC)+j\right),
\end{split}
\]
completing the proof.
\end{proof}

We have the following corollary of Proposition~\ref{prop:main-computation-V-s}:

\begin{corollary}\label{cor:negative}
  Suppose $\cC$ is a positive staircase, and for $i\in \{1,\dots, r\}$, let $\cS^{n_i}$ denote the staircase complexes of
  Definition~\ref{def:super_basic}. Assume $\sum n_i< 0$. Then
  \[V_s(\cC\otimes\cS^{n_1}\otimes\dots\otimes\cS^{n_r})=\max_{0\le j \le n} \left( V_{s-2j+n}(\cC)-j\right),\]
  where $n=-\sum n_i$.
\end{corollary}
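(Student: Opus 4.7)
The plan is to reduce to the setup of Proposition~\ref{prop:main-computation-V-s}(2) and then perform a direct grading computation. First, I would use Proposition~\ref{prop:local_equivalence} inductively to conclude that $\cS^{n_1}\otimes\cdots\otimes\cS^{n_r}$ is locally equivalent to $\cS^{\sum n_i}=\cS^{-n}$, and then invoke Proposition~\ref{prop:local_preserves} to deduce that
\[
V_s\bigl(\cC\otimes\cS^{n_1}\otimes\cdots\otimes\cS^{n_r}\bigr)=V_s\bigl(\cC\otimes\cS^{-n}\bigr).
\]
Since $\cC$ is assumed to be a single positive staircase, the hypothesis of the sharp statement in Proposition~\ref{prop:main-computation-V-s}(2) is met (with $\cN=\cS^{-n}$ and $\cP=\cC$), giving an exact formula.

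Next I would identify the relevant generators and gradings. By Definition~\ref{def:super_basic}, the generators of $N_0$ for $\cN=\cS^{-n}$ are the elements $\underline{x}_{2i}$ for $0\le i\le n$, with bigrading $(\gr_w(\underline{x}_{2i}),\gr_z(\underline{x}_{2i}))=(2i,2n-2i)$. Substituting into the formula of Proposition~\ref{prop:main-computation-V-s}(2) gives
\[
-2V_s(\cS^{-n}\otimes\cC)=\min_{0\le i\le n}\ \max_{y\in\cG(P_0)}\min\bigl(2i+\gr_w(y),\ 2n-2i+\gr_z(y)+2s\bigr).
\]
Using the identity $\min(a+b,c+d)=b+\min(a,c+d-b)$ with $b=2i$, the inner expression simplifies to
\[
2i+\max_{y\in\cG(P_0)}\min\bigl(\gr_w(y),\ \gr_z(y)+2(s+n-2i)\bigr),
\]
and by Proposition~\ref{prop:vj_pos} the maximum here equals $-2V_{s+n-2i}(\cC)$.

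Combining these, with the substitution $j=i$, yields
\[
-2V_s(\cS^{-n}\otimes\cC)=\min_{0\le j\le n}\bigl(2j-2V_{s-2j+n}(\cC)\bigr)=-2\max_{0\le j\le n}\bigl(V_{s-2j+n}(\cC)-j\bigr),
\]
which is the desired formula. The main (and essentially only) obstacle is to carefully bookkeep the gradings and ensure that $\sum n_i<0$ places us in the regime where $\cS^{\sum n_i}$ is in fact the negative staircase $\cS^{-n}$ so that Proposition~\ref{prop:main-computation-V-s}(2) applies with $\cN=\cS^{-n}$; everything else is a mechanical rearrangement of min/max expressions.
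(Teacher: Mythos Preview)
Your proof is correct and follows essentially the same approach as the paper: reduce to $\cS^{-n}$ via local equivalence, apply Proposition~\ref{prop:main-computation-V-s}(2), plug in the explicit gradings of the $N_0$-generators of $\cS^{-n}$, and recognize the inner optimization as $V_{s-2j+n}(\cC)$ via Proposition~\ref{prop:vj_pos}. The only cosmetic difference is that the paper carries out the computation in the $\alpha=-\tfrac12\gr_w$, $\beta=-\tfrac12\gr_z$ notation, whereas you work directly with $\gr_w,\gr_z$; the manipulations are otherwise line-for-line equivalent.
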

\begin{remark}
In contrast to Corollary~\ref{cor:cor_step}, where $\cC$ was allowed to be a positive \emph{multi-}staircase (i.e., a tensor product of positive staircases), in Corollary~\ref{cor:negative}
  we require that $\cC$ be a positive \emph{staircase}.
\end{remark}
\begin{proof}[Proof of Corollary~\ref{cor:negative}:  ]
  As in the proof of Corollary~\ref{cor:cor_step}, $\cS^{n_1}\otimes\dots\otimes\cS^{n_r}$ is locally equivalent to $\cS^{-n}$, so it is sufficient to consider the case when $i=1$.
  Write $x_1,\dots, x_q$ for the generators of $C_0$, and $y_0,\dots, y_n$ for the generators of the 0-level of $\cS^{-n}$. 
  According to Proposition~\ref{prop:main-computation-V-s}:
  \begin{equation}
    \begin{split}
      V_s(\cC\otimes\cS^{-n})&=\max_{0\le i\le n }\min_{ 1\le j\le q}\max(\alpha(x_j)+\alpha(y_i),\beta(x_j)+\beta(y_i)-s)\\
      &=\max_{0\le i\le n }\min_{ 1\le j\le q}\max(\alpha(x_j)-i,\beta(x_j)-n+i-s)\\
      &=\max_{0\le i\le n }\min_{ 1\le j\le q }\left(\max(\alpha(x_j),\beta(x_j)-n+2i-s)-i\right)\\
      &=\max_{0\le i\le n} \left(V_{s-2i +n}(\cC)-i\right).
    \end{split}
  \end{equation}
\end{proof}

\subsection{Knots with split towers}
\label{sub:split}

We now introduce the notion of a knot complex with \emph{split towers}. The correction terms of  a knot complex with split towers have a relatively simple form. An important example of a knot with split towers are connected sums of knotifications of positive and negative $T(2,2n)$ torus 
links.
\begin{definition}[Split towers]\label{def:knots_split_towers}
  Let $K$ be a knot in $Y=\#^mS^2\times S^1$, and let $\cC$ be a chain complex which is free and finitely generated over $\scR$ and is homotopy equivalent to $\cCFK^-(Y,K,\frs_0)$. We say that $\cC$ has
  \emph{split towers} if there exists a basis $\gamma_1,\dots,\gamma_m$
  of $H_1(\#^m S^2\times S^1;\Z)$ and subcomplexes $\cC^I_*\subset \cC$, indexed over subsets $I\subset \{\gamma_1,\dots, \gamma_m\}$, such that the following are satisfied:
  \begin{itemize}
    \item[(a)] $\cC=\bigoplus_{I\subset\{\gamma_1,\dots,\gamma_m\}} \cC^I$;
    \item[(b)] If $\gamma_i\not \in I$, then $A_{\gamma_i}$ takes $H_*(\cC^I)$ to $H_*(\cC^{I\cup\{\gamma_i\}})$, and becomes an isomorphism after inverting $\scU,\scV$. If $\gamma_i\in I$, then $A_{\gamma_i}$ vanishes on $H_*(\cC^I)$, after inverting $\scU,\scV$.
  \end{itemize}
\end{definition}
Abusing notation slightly, we say a knot $K$ has split towers if there is a representative of $\cCFK^-(Y,K)$ which has split towers. Note that in many of our examples, the homology action actually respects the splitting on the chain level, i.e. $A_{\g_i}$ maps $\cC^I$ to $\cC^{I\cup \{\g_i\}}$ if $\g_i\not \in I$, and $A_{\g_i}$ vanishes on $\cC^I$ if $\g_i\in I$.

\begin{example}\ \label{ex:split}
  \begin{itemize}
    \item Any knot $K$ in $S^3$ has split towers (trivially).
    \item The knotification of the $T(2,2n)$ torus link has split towers. See Proposition~\ref{prop:summary-T(2,2n)}.
    \item The Borromean knot does not have split towers. 
  \end{itemize}
\end{example}
\begin{lemma}
  If $K$ and $K'$ have split towers, then $K\# K'$ has split towers.
\end{lemma}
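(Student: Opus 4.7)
My plan is to build the required splitting for $K\# K'$ by taking tensor products of the summands for $K$ and $K'$. Let $Y=\#^m S^2\times S^1$ and $Y'=\#^{m'} S^2\times S^1$, let $\gamma_1,\dots,\gamma_m$ and $\gamma_1',\dots,\gamma_{m'}'$ be the bases of $H_1(Y)$ and $H_1(Y')$ witnessing the split tower decompositions $\cC=\bigoplus_I \cC^I$ and $\cC'=\bigoplus_J (\cC')^J$. Under the natural identification $H_1(Y\# Y')=H_1(Y)\oplus H_1(Y')$, the union of these bases is a basis for $H_1(Y\# Y')$, and subsets of this combined basis are in bijection with pairs $(I,J)$.

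The first ingredient is the K\"unneth formula for the knot Floer complex under connected sum: there is a homotopy equivalence $\cCFK^-(Y\# Y', K\# K')\simeq \cC\otimes_{\scR} \cC'$ which is equivariant for the $H_1$-action, in the sense that, up to chain homotopy, $A_{\gamma_i}$ corresponds to $A_{\gamma_i}\otimes \id$ and $A_{\gamma_j'}$ to $\id\otimes A_{\gamma_j'}$ (see \cite{OSKnots}; this is also a special case of the TQFT properties used in Section~\ref{sub:HF_knot}). With this identification I set
\[
(\cC\otimes \cC')^{I\sqcup J}:=\cC^I\otimes_{\scR} (\cC')^J,
\]
and define the decomposition of $\cC\otimes\cC'$ as the direct sum of these pieces, indexed over all subsets $I\sqcup J\subset\{\gamma_1,\dots,\gamma_m,\gamma_1',\dots,\gamma_{m'}'\}$. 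Condition~(a) of Definition~\ref{def:knots_split_towers} is immediate from distributivity of tensor product over direct sum.

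For condition~(b) I argue after localizing the monomials in $\scU$ and $\scV$. Under the standing hypothesis that each $\cC^I$ and each $(\cC')^J$ satisfies the usual finiteness and freeness conditions used throughout Section~\ref{sub:V_inv}, the localized homology $(\scU,\scV)^{-1}H_*(\cC^I\otimes (\cC')^J)$ splits as $(\scU,\scV)^{-1}H_*(\cC^I)\otimes (\scU,\scV)^{-1}H_*((\cC')^J)$ by a K\"unneth argument (each localized factor is a free module over the localized ring, so there are no higher $\Tor$ terms). Then for $\gamma_i\notin I$ the map $A_{\gamma_i}\otimes\id$ is the tensor product of an isomorphism with an isomorphism and hence an isomorphism onto the appropriate summand, while for $\gamma_i\in I$ it is the tensor product of zero with $\id$ and hence zero. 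The analogous statements hold for the $\gamma_j'$, giving~(b).

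The step I expect to require the most care is the K\"unneth identification together with its equivariance for the $H_1$-action: one must verify both that the connected sum isomorphism intertwines the two homology actions (a TQFT/chain-level statement analogous to Lemma~\ref{lem:easier-commutations}) and that the localized K\"unneth comparison is actually an isomorphism, not just a surjection. Both points are routine given the framework set up in Section~\ref{sub:homol}, but they are the only non-formal ingredients in the argument.
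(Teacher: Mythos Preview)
Your proof is correct and follows exactly the approach of the paper, which simply states ``This is a direct consequence of the K\"unneth formula.'' You have spelled out the details that the paper leaves implicit: the tensor splitting $(\cC\otimes\cC')^{I\sqcup J}=\cC^I\otimes(\cC')^J$, the identification of the $H_1$-action on the connected sum with $A_{\gamma_i}\otimes\id$ and $\id\otimes A_{\gamma_j'}$, and the verification of condition~(b) after localization.
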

\begin{proof}
  This is a direct consequence of the K\"unneth formula.
\end{proof}
\begin{proposition}\label{prop:split_towers}
  Suppose $K$ is a knot in $\#^m S^2\times S^1$ with split towers. Write 
  \[\cC^{\top}=\cC^{\emptyset}\quad \text{and} \quad\cC^{\bot}=\cC^{\g_1,\dots, \g_m}.
  \] Then
  \[
V_s^{\top}(K)=V_s(\cC^{\top})\quad \text{and} \quad V_s^{\bot}(K)=V_s(\cC^{\bot}).
  \]
  Suppose, additionally, that $n>0$ and $\Bor$ is the Borromean knot. Then
  \begin{align*}
    V_s^{\top}(K\#^n\Bor) =&-\frac{n}{2}+\min_{0\le j\le n}\left( V_{s+2j-n}(\cC^{\top})+j\right)\\
    V_s^{\bot}(K\#^n\Bor) =&-\frac{n}{2}+\max_{0\le j\le n} \left(V_{s+2j-n}(\cC^{\bot})+j\right). 
  \end{align*}
\end{proposition}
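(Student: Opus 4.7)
The plan is to combine the Koszul-style structure coming from the split tower decomposition with a Künneth computation for the Borromean factors. For the first pair of equalities, I would unpack the defining data of split towers. By assumption $\cC=\bigoplus_I \cC^I$, and after inverting $\scU,\scV$ each $A_{\g_i}$ becomes an isomorphism $\cC^I\to \cC^{I\cup\{\g_i\}}$ when $\g_i\notin I$ and vanishes when $\g_i\in I$; this is a $2^m$-summand Koszul-type diagram on $H_*(\cC)/\Tors$. By inspection, every summand $\cC^I$ with $I\neq\emptyset$ lies in the image of $A_{\g_i}$ for any $\g_i\in I$ (an isomorphism, after localization), so the cokernel $\cH^{\top}$ is locally equivalent to $\cC^{\emptyset}=\cC^{\top}$. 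Dually, $A_{\g_i}$ has localized kernel $\bigoplus_{J\ni\g_i}\cC^J$, so $\cH^{\bot}=\bigcap_i\ker A_{\g_i}$ is locally equivalent to $\cC^{\{\g_1,\dots,\g_m\}}=\cC^{\bot}$. Feeding these into Definition~\ref{def:Vtop} gives $V_s^{\top}(K)=V_s(\cC^{\top})$ and $V_s^{\bot}(K)=V_s(\cC^{\bot})$.

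For the Borromean contribution, I would first compute the local equivalence classes of $\cH^{\top}(\Bor)$ and $\cH^{\bot}(\Bor)$ directly from the explicit description of $\cCFK^-(\Bor)\cong \F\langle 1,x,y,xy\rangle\otimes\scR$ given in Subsection~\ref{sub:borro}. A short module calculation shows that, modulo $\scR$-torsion, $\cH^{\top}(\Bor)$ is generated by $[1]$ and $[xy]$ with the single relation $\scU[1]=\scV[xy]$, so that it is locally equivalent to the staircase $\cS^1\{1,1\}$. Dually, $\cH^{\bot}(\Bor)$ is generated by $\scU\cdot 1+\scV\cdot xy$ at bigrading $(-1,-1)$, locally equivalent to $\cS^{-1}\{-1,-1\}$. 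I would then bootstrap to $\Bor^{\otimes n}$ and to $K\#^n\Bor$ via Künneth: since $\cCFK^-(\Bor)$ is $\scR$-free, homology commutes with tensor products, and the cokernel (resp.\ intersection of kernels) of a direct sum of actions on a tensor product decomposes as the tensor product of the individual cokernels (resp.\ kernels). Combined with Proposition~\ref{prop:local_equivalence} and the first part of the proposition for $K$, this yields
\[
\cH^{\top}(K\#^n\Bor)\simeq \cC^{\top}\otimes \cS^n\{n,n\},\qquad \cH^{\bot}(K\#^n\Bor)\simeq \cC^{\bot}\otimes \cS^{-n}\{-n,-n\},
\]
where I use that $H_1(K)$ and the $n$ copies of $H_1(\#^2 S^2\times S^1)$ act on disjoint tensor factors.

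The last step is bookkeeping. By Lemma~\ref{lem:shift}, the $\{n,n\}$ shift contributes $-n/2$ to $V_s$ (with no shift in $s$, as $a-b=0$), and $\{-n,-n\}$ contributes $+n/2$. Corollary~\ref{cor:cor_step} then expresses $V_s(\cC^{\top}\otimes \cS^n)$ as the advertised $\min_j$-formula, while Corollary~\ref{cor:negative}, after the substitution $j\leftrightarrow n-j$, expresses $V_s(\cC^{\bot}\otimes \cS^{-n})$ as the advertised $\max_j$-formula. I expect the main obstacle to be the Künneth step: passing from the $n=1$ calculation to the statement for $\Bor^{\otimes n}$ requires arguing that the local equivalence class of the pair $(\Bor,H_1)$ is multiplicative under tensor product, and a mild additional argument is needed to invoke Corollary~\ref{cor:negative} when $\cC^{\bot}$ is only a multi-staircase rather than a single positive staircase.
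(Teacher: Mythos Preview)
Your treatment of the first pair of equalities is essentially the paper's own argument: use the split--tower decomposition to see that the cokernel (resp.\ intersection of kernels) of the $H_1$-action is carried by $\cC^{\emptyset}$ (resp.\ $\cC^{\{\g_1,\dots,\g_m\}}$), and read off $V_s^{\top}$, $V_s^{\bot}$ from there.

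For the Borromean part, however, your strategy diverges from the paper's and has a genuine gap on the $V^{\bot}$ side. The paper does \emph{not} factor through any identification $\cH^{\bot}(K\#^n\Bor)\simeq \cC^{\bot}\otimes\cS^{-n}\{-n,-n\}$. Instead, it computes the kernel of the $H_1$-action directly: using the explicit $H_1(\#^2 S^2\times S^1)$-module structure on $\bB$, one checks that a homogeneous $\F[U]$-non-torsion element of $\scA_s(K\#^n\Bor)$ lies in the bottom if and only if it has the form $\sum_{(a_1,\dots,a_n)\in\{-1,1\}^n} x_{a_1,\dots,a_n}\otimes\epsilon_{a_1}\otimes\cdots\otimes\epsilon_{a_n}$, where $\epsilon_{-1}=1$, $\epsilon_{1}=xy$, and each $x_{a_1,\dots,a_n}\in\cC^{\bot}_{s+\sum a_i}$ is an arbitrary non-torsion cycle of $\gr_w$-grading $d+\sum a_i$. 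Since such an element exists at $\gr_w=d$ precisely when \emph{every} $\cC^{\bot}_{s+\sum a_i}$ admits a non-torsion cycle in the right grading, one obtains $d^{\bot}_s=\min_j\big(d(\cC^{\bot}_{s+2j-n})-(2j-n)\big)$, i.e.\ the $\max$ formula for $V^{\bot}_s$. The $V^{\top}$ formula follows dually: any single term $x\otimes\epsilon_{a_1}\otimes\cdots\otimes\epsilon_{a_n}$ generates the cokernel, so one takes the best choice.

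Your route breaks down in two places. First, the K\"unneth step ``intersection of kernels on a tensor product is the tensor of kernels'' fails here: $\cH^{\bot}(\Bor)$ is the rank-one submodule $\scR\langle \scU\cdot 1+\scV\cdot xy\rangle$, but when tensored with $\cC^{\bot}$ the kernel of $1\otimes A_{x^*}$, $1\otimes A_{y^*}$ is strictly larger than $\cC^{\bot}\otimes\cH^{\bot}(\Bor)$ --- it consists of all $z_1\otimes 1+z_{xy}\otimes xy$ with $\scV z_1=\scU z_{xy}$ in $H_*(\cC^{\bot})$, which need not be of the form $(\scU w)\otimes 1+(\scV w)\otimes xy$ when $H_*(\cC^{\bot})$ is not free. (Concretely, for $\cC^{\bot}=\cS^1$, the pair $(z_1,z_{xy})=(\xs_2,\xs_0)$ lies in the kernel but is not in $\cC^{\bot}\otimes\scR\{-1,-1\}$.) Second, and decisively, the ``mild additional argument'' you anticipate to extend Corollary~\ref{cor:negative} to multi-staircases is impossible: the paper's Subsection~\ref{sub:counter} gives an explicit multi-staircase $\cP$ for which $V_0(\cP\otimes\cS^{-1})$ strictly exceeds the $\max$-formula. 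Since the Proposition holds for any split-tower $K$ (in particular $K=T(6,7)\#T(4,5)$, where $\cC^{\bot}=\cP$), this forces your intermediate identity $V_s^{\bot}(K\#^n\Bor)=V_s(\cC^{\bot}\otimes\cS^{-n}\{-n,-n\})$ to be false in general. The paper's direct computation sidesteps both issues.
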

\begin{proof}
We consider first the proof that $V_{s}^{\top}(K)=V_s(\cC^{\top})$. It is  sufficient to show that
\begin{equation}
d^{\top}(\scA_s(K))=d(\cC_s^{\top}), \label{eq:d-invariants-top}
\end{equation}
where $\cC_s^{\top}$ denotes the subcomplex of $\cC^{\top}$ in Alexander grading $s$, and both $d$ invariants are computed with respect to the $\gr_w$-grading. By definition, $d^{\top}(\scA_s(K))$ is the maximal grading of a homogeneously graded element of $H_*(\scA_s(K))$ which maps to an element of $U^{-1} H_*(\scA_s(K))$ having non-trivial image in $\cH^{\top}$; see Subsection~\ref{sub:V_inv}.
Since $K$ has split towers, the cokernel above is spanned by $U^{-1}H_*(\cC^{\top}_s)$, and $H_*(\cC^{I}_s)$ has trivial image for $I\neq \emptyset$, equation~\eqref{eq:d-invariants-top} follows.

The claim about $d^{\bot}$ is similar. In this case, $d^{\bot}(\scA_s(K))$ is defined as the maximal grading of a homogeneous element in $H_*(\scA_s(K))/\Tors$ which is in the image of $\cH^{bot}$. This is clearly $d(\cC_s^{\bot})$. 

  \smallskip
  We pass now to the second part of the proof. An analogous argument appeared in \cite{BCG,BHL}; we recall it for completeness. 
  The complex $\cCFK^-(\Bor)$ is described in Section~\ref{sub:borro}. Since $\cCFK^-(\Bor)$ has vanishing differential,  we obtain
  \[
H_*(\cCFK^-(K)\otimes \cCFK^-(\Bor)^{\otimes n})\cong \cHFK^-(K)\otimes_{\bF} \bB^{\otimes n},  
  \]
  where $\bB$ is the 4-dimensional vector space spanned by $1$, $x$, $y$ and $xy$, whose bigradings are shown in equation~\eqref{eq:borromean-gradings}.
  
  We first consider the claim about $V^{\bot}_s$. Using the $H_1$-action on $\cCFK^-(\Bor)$ described in Section~\ref{sub:borro}, one easily obtains the following:
a cycle $x\in \scA_s(K\#^n \Bor)$ is of homogeneous $\gr_w$-grading $d$, is $\bF[U]$-non-torsion, and maps to the kernel of the $H_1$ action in $U^{-1} H_*(\scA_s(K\# B^{\# n}))$ if and only if it has the form
  \[
\sum_{\{a_1,\dots, a_n\}\in \{-1,1\}^n} x_{a_1,\dots, a_n} \otimes \epsilon_{a_1}\otimes \cdots \otimes \epsilon_{a_n},
  \]
  where $\epsilon_{-1}=1\in \bB$ and $\epsilon_1=xy\in \bB$ with $\gr_{w}=1$ and $-1$ respectively. Moreover, each
  \[
  x_{a_1,\dots, a_n}\in \cC_{s+\sum a_i}^{\bot}(K)
  \]
  is an $\bF[U]$-non-torsion cycle of homogeneous $\gr_{w}$-grading $d+\sum a_i$. Noting that $\sum a_i$ can be any integer of the form $n-2j$ for $0\le j\le n$, we obtain that
  \[
d^{\bot}(\scA_s(K\#^n \Bor))=\min_{0\le j\le n} \left(d(\cC_{s+n-2j}^{\bot})-n+2j\right).
  \]
  Multiplying by $-\tfrac{1}{2}$ and switching $j$ to $n-j$ yields the statement.
  
  The proof for $d^{\top}$ is analogous. The cokernel of the $H_1$-action on $U^{-1}H_*(\scA_s(K\#^n \Bor))$ is spanned by any element of the form $x\otimes \epsilon_{a_1}\otimes \cdots \otimes \epsilon_{a_n}$ where  $\epsilon_{a_i}$ are as above, and $x\in \cC^{\top}_{s+\sum a_i}(K)$ is a homogeneously graded, $\bF[U]$-non-torsion element. Furthermore, any homogeneous element generating $U^{-1}H_*(\scA_s(K\#^n \Bor)$ is a sum of an odd number of such elements. The same argument as before shows that 
  \[
d^\top(\scA_s(K\#^n \Bor))=\max_{0\le j\le n} \left( d(\cC^{\top}_{s+n-2j})-n+2j \right).
  \]
   Multiplying by $-\tfrac{1}{2}$ and switching $j$ to $n-j$ yields the statement.
\end{proof}

\section{Nonrational non-cuspidal complex curves}\label{sec:non_rational}

\subsection{General estimates}\label{sub:notation}
We now pass to main applications of our paper. Suppose $C\subset\CP^2$ is a degree $d$ curve. We mostly focus on the case when $C$ is complex curve, but also consider the case where $C$ is only a smooth surface, embedded away from a finite set of singular points, as in Definition~\ref{def:singular_curve}. We further assume that the singularities
of $C$ are restricted to the following:
\begin{itemize}
  \item There are $\nu$ cuspidal (unibranched) singular points $z_1,\dots,z_\nu$. We write $K_1,\dots,K_\nu$ for their links, and  set $K=K_1\#\cdots\# K_\nu$.
  \item There are $m_n$ singular points whose link is $T(2,2n)$.
  \item There are $\um_n$ singular points whose link is $-T(2,2n)$.
\end{itemize}
Define
\[\kappa_+=\sum_n nm_n,\ \kappa_-=\sum_n n\um_n,\ \eta_+=\sum_n m_n,\ \eta_-=\sum_n \um_n.\]
Additionally, we assume that the curve has genus $g$ given by the formula:
\begin{equation}\label{eq:genus_g}
  g=g(C)=\frac{(d-1)(d-2)}{2}-g_3(K)-(\kappa_++\kappa_-)
\end{equation}
For algebraic curves, $\kappa_-=0$ and \eqref{eq:genus_g} is the adjunction formula. If $C$ is a singular curve in the smooth category
of algebraic type (i.e. $\kappa_-=0$), the adjunction inequality implies that $g(C)$ is greater or equal to the right-hand side
of \eqref{eq:genus_g}. If $C$ is of weak algebraic type, the relation between $g(C)$ and the right-hand side of \eqref{eq:genus_g}
can be more involved.

We define
\begin{equation}\label{eq:Khatdef}
\begin{split}
K_+&=K\#\hashsum\limits_{n} m_n H_{n}\\
\widetilde{K}&=K_+\#K_-
\end{split}
\qquad \qquad \qquad 
\begin{split}K_-&=\hashsum\limits_{n}\um_n\underline{H}_n\\
\widehat{K}&=\widetilde{K}\#_g\Bor
\end{split}
\end{equation}
where $H_{n}$ denotes the knotification of the torus knot $T(2, 2n)$, and $\underline{H}_{n}$ denotes the knotification of its mirror.

Since the knots $K_1,\dots,K_\nu$ are algebraic knots, in particular, L-space knots, their knot Floer complexes are staircase complexes, which we denote by $\cC(K_i)$. In particular,
\[
  \cCFK^-(K)=\cC(K_1)\otimes \cdots \otimes \cC(K_\nu)
\]
is a positive multi-staircase complex.
Note that by Proposition~\ref{prop:summary-T(2,2n)} and Example~\ref{ex:split}, the knots $K_+$, $K_-$, and $\wt{K}$ have split towers. The following relations follow from  Proposition~\ref{prop:summary-T(2,2n)},  the K\"{u}nneth theorem for connected sums, and Proposition~\ref{prop:local_equivalence}. Here, we write $\cong$ for homotopy equivalence of chain complexes, and $\underset{\mathrm{loc}}{\simeq}$ for local equivalence.
The brackets denote an overall grading shift.
\begin{align*}
  \cC^{\top}(K_+)&\cong\cC^{\top}(K)\otimes  \bigotimes_{n} (\cS^{n})^{\otimes m_n} \{\tfrac{\eta_+}{2},\tfrac{\eta_+}{2}\}\\
  \cC^{\bot}(K_+)&\cong\cC^{\bot}(K)\otimes \bigotimes_{n} (\cS^{n-1})^{\otimes m_n} \{-\tfrac{\eta_+}{2},-\tfrac{\eta_+}{2}\}\\
  \cC^{\top}(K_-)&\cong \bigotimes_{n}(\ul{\cS}^{n-1})^{\otimes\um_n} \{\tfrac{\eta_-}{2},\tfrac{\eta_-}{2}\}\\
  \cC^{\bot}(K_-)&\cong \bigotimes_{n}(\ul{\cS}^{n})^{\otimes\um_n} \{-\tfrac{\eta_-}{2},-\tfrac{\eta_-}{2}\}\\
  \cC^{\top}(\wt{K})&\cong\cC^{\top}(K_+)\otimes \cC^{\top}(K_-)\underset{\mathrm{loc}}{\simeq }\cC(K)\otimes\cS^{\kappa_+-(\kappa_--\eta_-)}\{\tfrac{\eta_++\eta_-}{2},\tfrac{\eta_++\eta_-}{2}\}\\
  \cC^{\bot}(\wt{K})&\cong\cC^{\bot}(K_+)\otimes \cC^{\bot}(K_-)\underset{\mathrm{loc}}{\simeq}\cC(K)\otimes\cS^{\kappa_+-\eta_+-\kappa_-}\{\tfrac{\eta_++\eta_-}{2},\tfrac{\eta_++\eta_-}{2}\}.
\end{align*}

We set 
\[\delta_1:=\kappa_+-(\kappa_--\eta_-),\ \ \delta_2:=(\kappa_+-\eta_+)-\kappa_-.\]
Whether the staircases in $\cC^{\top}({\wt{K}})$ and $\cC^{\bot}(\wt{K})$ are positive or negative depends on the signs of $\delta_{1}, \delta_{2}$. We gather now computations from Corollaries~\ref{cor:cor_step} and~\ref{cor:negative}, Proposition~\ref{prop:vj_sum}, Proposition~\ref{prop:summary-mirrorT(2,2n)}
as well Lemma~\ref{lem:shift} to  obtain the following
result, which is the main tool towards Theorems~\ref{thm:genus_and_double} and~\ref{thm:double_neg}.
\begin{proposition}Suppose $K$, $\wt{K}$ and $\widehat{K}$ are as above.
 \label{prop:gather_all_we_need}
 \begin{enumerate}[ref=Proposition~\ref{prop:gather_all_we_need} (\alph*), label=(\alph*)]
    \item\label{prop:gather-a} If $\delta_1\ge 0$, then
    \begin{equation*}
      \begin{split}
	V^{\top}_s(\wt{K})&=-\frac{\eta_++\eta_-}{4}+
      \min_{0\le j\le\delta_1} (V_{s+2j-\delta_1}(K)+j)\\
      V^{\top}_s(\widehat{K})&=-\frac{g}{2}-\frac{\eta_++\eta_-}{4}+
      \min_{0\le j\le\delta_1+g} (V_{s+2j-\delta_1-g}(K)+j)\\
      &=-\frac{g}{2}-\frac{\eta_++\eta_-}{4}+\min_{0\le j\le\delta_1+g}(R(g_3(K)+s+2j-\delta_1-g)-(s+j-\delta_1-g)).\label{eq:gather_a}
    \end{split}
    \end{equation*}
    \item If $\delta_2\ge 0$, 
      then
      \begin{equation*}
      \begin{split}
	V^{\bot}_s(\wt{K})=&\frac{\eta_++\eta_-}{4}+
      \min_{0\le j\le\delta_2} (V_{s+2j-\delta_2}(K)+j)
      \\
      V^{\bot}_s(\widehat{K})=&\frac{\eta_++\eta_-}{4}-\frac{g}{2}+
      \max_{0\le i\le g}\min_{0\le j\le\delta_2} (V_{s+2j+2i-g-\delta_2}(K)+i+j)
      \\
      =&-\frac{g}{2}+\frac{\eta_++\eta_-}{4}
      \\
      &+\max_{0\le i\le g}\min_{0\le j\le\delta_2} (R(g_3(K)+s+2j+2i-g-\delta_2)-(s+i+j-g-\delta_2)).\label{eq:gather_b}
      \end{split}
    \end{equation*}
    \item If $\delta_1<0$ and $\cC(K)$ is a positive staircase
      (not just a positive multi-staircase), then
      \begin{equation*}
      \begin{split}
	V^{\top}_s(\wt{K})=&-\frac{\eta_++\eta_-}{4}+
      \max_{0\le j\le-\delta_1} (V_{s-2j-\delta_1}(K)-j)
      \\
      V^{\top}_s(\widehat{K})=&\frac{g}{2}-\frac{\eta_++\eta_-}{4}+
      \min_{0\le i\le g}\max_{0\le j\le-\delta_1} (V_{s-2j-2i+g-\delta_1}(K)-i-j)
      \\
      =&\frac{g}{2}-\frac{\eta_++\eta_-}{4}\\
      &+
      \min_{0\le i\le g}\max_{0\le j\le-\delta_1} (R(g_3(K)+s-2j-2i+g-\delta_1)-(s-i-j+g-\delta_1)).
    \end{split}
    \label{eq:gather_c}
    \end{equation*}
    \item If $\delta_2<0$ and $\cC(K)$ is a positive staircase,
      then
      \begin{equation*}
      \begin{split}
	V^{\bot}_s(\wt{K})&=\frac{\eta_++\eta_-}{4}+
      \max_{0\le j\le-\delta_2} (V_{s-2j-\delta_2}(K)-j)\\
      V^{\bot}_s(\widehat{K})&=\frac{g}{2}+\frac{\eta_++\eta_-}{4}+
	\max_{0\le j\le g-\delta_2} (V_{s-2j+g-\delta_2}(K)-j)\\
      &=\frac{g}{2}+\frac{\eta_++\eta_-}{4}+
	\max_{0\le j\le g-\delta_2} (R(g_3(K)+s-2j+g-\delta_2)-(s-j+g-\delta_2)).\label{eq:gather_d}
      \end{split}
      \end{equation*}	
  \end{enumerate}
\end{proposition}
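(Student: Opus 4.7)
The plan is to derive each of the four identities from four ingredients already at our disposal: the local equivalences for $\cC^{\top}(\wt K)$ and $\cC^{\bot}(\wt K)$ listed immediately before the statement, the grading-shift formula of Lemma~\ref{lem:shift}, the tensor-product computations of Corollaries~\ref{cor:cor_step} and~\ref{cor:negative}, and the split-towers formula of Proposition~\ref{prop:split_towers}. Throughout set $\eta := \eta_+ + \eta_-$.

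First I would replace $\cC^{\top}(\wt K)$ and $\cC^{\bot}(\wt K)$ by the locally equivalent models
\[
\cC(K)\otimes \cS^{\delta_1}\{\tfrac{\eta}{2},\tfrac{\eta}{2}\} \quad \text{and} \quad \cC(K)\otimes \cS^{\delta_2}\{-\tfrac{\eta}{2},-\tfrac{\eta}{2}\},
\]
which result from collecting the positive $\cS^n$ factors and the dualized $\ul{\cS}^n$ factors coming from the knotifications $H_n$ and $\ul{H}_n$, and then combining them using Proposition~\ref{prop:local_equivalence}. Local equivalence preserves all $V_s$-invariants by Proposition~\ref{prop:local_preserves}, and a bigrading shift by $\{a,a\}$ contributes an overall additive term $-a/2$ to $V_s$ by Lemma~\ref{lem:shift}. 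This already explains the constants $\mp \tfrac{\eta}{4}$ that appear in each formula for $V_s^{\top/\bot}(\wt K)$.

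Next I would compute $V_s$ of the unshifted complex $\cC(K)\otimes\cS^{\delta}$ depending on the sign of $\delta$. When $\delta\ge 0$ (parts~(a) and~(b)), Corollary~\ref{cor:cor_step} applies to any positive multi-staircase and yields a single minimum over $0\le j\le\delta$, giving the stated formulas for $V_s^{\top}(\wt K)$ and $V_s^{\bot}(\wt K)$. When $\delta<0$ (parts~(c) and~(d)), Corollary~\ref{cor:negative} gives a single maximum over $0\le j\le -\delta$, but requires that $\cC(K)$ be a \emph{single} positive staircase rather than a tensor product of several; this is the reason for the extra hypothesis in~(c) and~(d), and Subsection~\ref{sub:counter} demonstrates that this restriction cannot be removed with the present algebraic techniques.

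To pass from $\wt K$ to $\widehat K=\wt K\#^g \Bor$, I would apply Proposition~\ref{prop:split_towers}, using that $\wt K$ has split towers by Example~\ref{ex:split} combined with the K\"unneth formula. This introduces an outer minimum (for $V^{\top}$) or maximum (for $V^{\bot}$) over $0\le i\le g$, shifting the Alexander index by $2i-g$ and adding $+i$ to the value. In parts~(a) and~(d) the inner and outer extrema are of the same type and can be collapsed into a single extremum indexed by $\ell=i+j$, ranging over $[0,g+\delta_1]$ and $[0,g-\delta_2]$ respectively; in parts~(b) and~(c) the two extrema are of opposite types and remain nested, with an affine substitution $i\mapsto g-i$ needed in part~(c) to bring the expression into the stated form. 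The last step is to translate $V_j(K)$ into the semigroup counting function via $V_j(K) = R(g_3(K)+j)-j$, which is Proposition~\ref{prop:vj_sum}.

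The main obstacle is bookkeeping: one must carefully track the signs of the grading shifts, the directions of the min/max operations, and the ranges produced by each change of variables. Once these are pinned down, the four identities follow by direct composition of the cited results.
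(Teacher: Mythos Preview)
Your proposal is correct and follows exactly the route the paper indicates: it assembles the local-equivalence models displayed just before the proposition, applies Lemma~\ref{lem:shift} for the grading shifts, Corollaries~\ref{cor:cor_step} and~\ref{cor:negative} for the staircase tensor factors, Proposition~\ref{prop:split_towers} for the Borromean summands, and finally Proposition~\ref{prop:vj_sum} to pass to the $R$-function. The paper itself offers no further detail than this list of ingredients, so your write-up is in fact more explicit than the original (including the observation that the bot shift is $\{-\tfrac{\eta}{2},-\tfrac{\eta}{2}\}$, consistent with the $+\tfrac{\eta}{4}$ in part~(b), and the substitution $i\mapsto g-i$ in part~(c)).
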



Proposition~\ref{prop:gather_all_we_need} allows us to express the $d$-invariants of the boundary $Y=\partial N$ of the tubular neighborhood
of $C$ in terms of the $R$-functions of singular points. In our applications, we will focus on two cases.

\begin{enumerate}
  \item \emph{Algebraic case.} We assume that $C$ has only algebraic singularities, that is, $\um_n=0$ for all $n>0$. This corresponds to items
    (a) and (b) of Proposition~\ref{prop:gather_all_we_need}.
  \item \emph{Single knot case.} We assume that $\nu=1$, so $K$ is a positive staircase and $m_n=0$ for all $n>0$. We will use items (c) and (d)
    of Proposition~\ref{prop:gather_all_we_need}.
\end{enumerate}
The first case is considered in Subsection~\ref{sub:genus_pos}. The second is addressed in Subsection~\ref{sub:genus_0}.

\subsection{Curves with no negative double points}\label{sub:genus_pos}
For the reader's convenience we provide a full statement of the next result.

\begin{theorem}\label{thm:genus_and_double}
  Let $C$ be a reduced curve with degree $d$ and genus $g$. 
 Suppose that $C$ has cuspidal singular points $z_1,\dots,z_\nu$, whose semigroup counting functions are $R_1,\dots,R_\nu$, respectively.
 Assume that apart from these $N$ points, the curve $C$ has, for each $n\ge 1$, $m_n\ge 0$ singular points whose links are $T(2,2n)$ \emph{(}$A_{2n-1}$ singular points\emph{)} and no other singularities.
Define 
\[
\eta_+=\sum_n m_n\quad \text{and} \quad \kappa_+=\sum_n n m_n.
\]  
  For any $k=1,\dots,d-2$, we have:
  \begin{equation}\label{eq:genus_and_double}
    \begin{split}
      \max_{0\le j\le g} \min_{0\le i\le \kappa_+-\eta_+} \left(R(kd+1-\eta_+-2i-2j)+i+j\right)&\le \frac{(k+1)(k+2)}{2}+g\\
      \min_{0\le j\le g+\kappa_+} \left(R(kd+1-2\gamma)+j\right) &\ge \frac{(k+1)(k+2)}{2}.
    \end{split}
  \end{equation}
  Here $R$ denotes the infimal convolution of the functions $R_1,\dots,R_{\nu}$.
\end{theorem}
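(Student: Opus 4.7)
The plan is to combine three main ingredients already established in the paper: the $d$--invariant bound of Proposition~\ref{prop:main_estimate}, the large surgery identification of Theorem~\ref{thm:vtop}, and the computation of $V^{\top}_s$ and $V^{\bot}_s$ for the knot $\widehat K$ given by Proposition~\ref{prop:gather_all_we_need}. Since $C$ has no $-T(2,2n)$ singularities we have $\um_n=\kappa_-=\eta_-=0$, so $\widehat K=K\#\hashsum_n m_n H_n\#_g\Bor$, the adjunction formula~\eqref{eq:genus_g} becomes $g_3(K)+\kappa_++g=(d-1)(d-2)/2$, and the shifts of Section~\ref{sub:notation} satisfy $\delta_1=\kappa_+\ge 0$ and $\delta_2=\kappa_+-\eta_+\ge 0$, placing us squarely in items~(a) and~(b) of Proposition~\ref{prop:gather_all_we_need}.

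By Subsection~\ref{sub:tubular}, $Y=\partial N$ is $d^2$--surgery on $\widehat K$ inside $Z=\#^{2g+\eta_+}S^2\times S^1$. For each odd integer $j$ the \spinc{} structure $\ssc_j$ of~\eqref{eq:ssc} extends from $Y$ over $X$ by Lemma~\ref{lem:homology-X}~\eqref{claim:X:3}, so Proposition~\ref{prop:main_estimate} yields
\[
d_{\top}(Y,\ssc_j|_Y)\le g+\tfrac12\eta_+,\qquad d_{\bot}(Y,\ssc_j|_Y)\ge -g-\tfrac12\eta_+.
\]
Setting $j=2k-d+2$ and comparing the identification $\cT_Y$ of~\eqref{eq:iso-spin^c-Y}--\eqref{eq:sscT} with the labelling of Definition~\ref{def:spin_m} (recall that $e=1$, $d_1=d$, $q=d^2$) shows that $\ssc_j|_Y=\frs_m$ for $m=kd+1-(d-1)(d-2)/2$, and the constraint $m\in[-d^2/2,d^2/2)$ is equivalent to $0\le k\le d-2$. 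A direct expansion then gives the key identity
\[
m+\frac{(d^2-2m)^2-d^2}{8d^2}=\frac{(k+1)(k+2)}{2}.
\]

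Feeding this value of $m$ into Theorem~\ref{thm:vtop} converts the two $d$--invariant bounds into estimates for $V^{\top}_m(\widehat K)$ and $V^{\bot}_m(\widehat K)$. Substituting the explicit formulas from Proposition~\ref{prop:gather_all_we_need}(a)--(b) cancels the shifts $-g/2\pm\eta_+/4$ against the right--hand side $g+\eta_+/2$ coming from Proposition~\ref{prop:main_estimate}, while Proposition~\ref{prop:vj_sum} (together with Corollary~\ref{cor:R_and_V} applied to each L--space knot summand) replaces every $V_s(K)$ by $R(g_3(K)+s)-s$, where $R=R_1\diamond\dots\diamond R_\nu$. Finally, the index substitution $j\mapsto g+\kappa_+-j$ turns the $d^{\top}$ estimate into the second inequality of the theorem, and $(i,j)\mapsto(g-i,\kappa_+-\eta_+-j)$ turns the $d^{\bot}$ estimate into the first one.

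The main obstacle is the final bookkeeping step: one has to align the three grading shifts (the surgery correction $((d^2-2m)^2-d^2)/(4d^2)$, the term $g+\eta_+/2$ from the $d$--invariant bound, and the $-g/2\pm\eta_+/4$ appearing in Proposition~\ref{prop:gather_all_we_need}) so that they cancel and leave the clean expression $(k+1)(k+2)/2$. This in turn relies on the explicit identification $\ssc_j|_Y=\frs_m$, which is obtained by pairing $c_1(\ssc_j)=jH$ with the capped Seifert surface class $[\hat F]\in H_2(W_\Lambda)$, following the comparison already carried out in the cuspidal case in~\cite{BLmain,BHL}.
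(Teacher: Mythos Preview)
Your argument is correct and is precisely the paper's proof: combine Proposition~\ref{prop:main_estimate}, Theorem~\ref{thm:vtop}, and Proposition~\ref{prop:gather_all_we_need}(a),(b) for the knot $\widehat K$, then reindex; your key identity $m+\tfrac{(d^2-2m)^2-d^2}{8d^2}=\tfrac{(k+1)(k+2)}{2}$ and the substitutions $j\mapsto g+\kappa_+-j$, $(i,j)\mapsto(g-i,\kappa_+-\eta_+-j)$ are exactly those used in the paper.

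One cosmetic slip: the formula $j=2k-d+2$ for the $\CP^2$ Spin$^c$ label is not right (for even $d$ this $j$ is even, hence does not index any $\ssc_j$). Comparing~\eqref{eq:sscT} with Definition~\ref{def:spin_m} one finds $\cT_Y(\ssc_j|_Y)=(jd-d^2)/2$ while $\cT_Y(\frs_m)\equiv m\pmod{d^2}$, so the correct odd label is $j=2k-2d+3$ (or $2d-2k-3$ by conjugation). This does not affect the rest of your argument, since you state and use the correct surgery index $m=kd+1-(d-1)(d-2)/2$, which coincides with the paper's $md$ for $m=k-(d-3)/2$.
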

\begin{proof}
Let $Y$ be the boundary of a tubular neighborhood of $C$. Then $Y$ is a result of a $d^2$ surgery on 
  $\widehat{K}\subset\#^\rho S^2\times S^1$ obtained as in Subsection~\ref{sub:genus_pos}, where
  we readily compute from \eqref{eq:define_n} $\rho=2g+\eta_+$.

  Let $\sss_j$, for $j\in[-d^2/2,d^2/2)\cap\Z$ denote the \spinc{} structures on $Y$ as in Definition~\ref{def:spin_m}.
  By Lemma~\ref{lem:homology-X}, $\sss_j$ extends to $\CP^2\setminus N$, if and only if $\sss_j$ is a restriction of $\ssc_h$ for some $h$, where
  $\ssc_h$ is as in \eqref{eq:ssc}. By \eqref{eq:sscT} we infer that this holds if and only if $j=md$ for $m\in\Z$ if $d$ is odd and 
  $m\in\tfrac12+\Z$ if $d$ is even. Compare with \cite[Lemma 3.1]{BLmain}.

  By Proposition~\ref{prop:main_estimate}, for any $md\in[-d^2/2,d^2/2)$ such that $m+\frac{d-1}{2}$ is an integer, we have
  \begin{equation}\label{eq:resume}
    d_{\textrm{bot}}(Y,\sss_{md})\ge -\frac{\eta_+}{2}-g,\ \ 
    d_{\textrm{top}}(Y,\sss_{md})\le \frac{\eta_+}{2}+g.
  \end{equation} 
  By Theorem \ref{thm:vtop},  \eqref{eq:resume} translates to the inequalities
  \begin{equation}\label{eq:we_resume_from_here}
    \begin{split}
      V^{\textrm{top}}_{md}(\widehat{K})&\ge \frac{(d-2m+1)(d-2m-1)}{8}-\frac{\eta_+}{4}-\frac{g}{2}\\
      V^{\textrm{bot}}_{md}(\widehat{K})&\le \frac{(d-2m+1)(d-2m-1)}{8}+\frac{\eta_+}{4}+\frac{g}{2}.
    \end{split}
  \end{equation}
  We compute $V^{\top}_{md}$ and $V^{\bot}_{md}$ from 
  Proposition~\ref{prop:gather_all_we_need}. 
  Using $g_3(K)=\tfrac12(d-1)(d-2)-g-\kappa_+$, we rewrite the equations of
  Proposition~\ref{prop:gather_all_we_need} (a) and~(b). 
  \begin{align*}
    V^{\top}_{md}(\widehat{K})&=-\frac{g}{2}-\frac{\eta_+}{4}+\min_{0\le j\le\kappa_++g}(R\left(\tfrac{(d-1)(d-2)}{2}+md+2j-2\kappa_+-2g\right)-\\
			      &(md+j-\kappa_+-g))\\
    V^{\bot}_{md}(\widehat{K})&=-\frac{g}{2}+\frac{\eta_+}{4}+\max_{0\le i\le g}\min_{0\le j\le\kappa_+-\eta_+} \\
			      &(R\left(\tfrac{(d-1)(d-2)}{2}+md+2j+2i-2g-2\kappa_++\eta_+\right)-(md+i+j-g-\kappa_++\eta_+)).
  \end{align*}
  Comparing this with \eqref{eq:we_resume_from_here}, we obtain:
  \begin{align*}
    \min_{0\le j\le \kappa_++g} &R\left(\tfrac{(d-1)(d-2)}{2}+md+2j-2\kappa_+-2g\right)-\\
    &-(md+j-\kappa_+-g)\ge \tfrac18(d-2m+1)(d-2m-1).\\
    \max_{0\le i\le g}\min_{0\le j\le \kappa_+-\eta_+} &R\left(\tfrac{(d-1)(d-2)}{2}+md+2i+2j-2(\kappa_+-\eta_+)-\eta_+-2g\right)-\\
    &-(md+j-\kappa_++\eta_+-2g)\\
    &\le\tfrac18(d-2m+1)(d-2m-1)+g.
  \end{align*}
  With a change $j\mapsto \kappa_++g-j$ in the first inequality and $i\mapsto g-i$, $j\mapsto \kappa_+-\eta_+-j$ in the second, we obtain.
  \begin{align*}
    \min_{0\le j\le \kappa_++g} &R\left(\tfrac{(d-1)(d-2)}{2}+md-2j\right)-md+j\ge \tfrac18(d-2m+1)(d-2m-1).\\
    \max_{0\le i\le g}\min_{0\le j\le \kappa_+-\eta_+} &R\left(\tfrac{(d-1)(d-2)}{2}+md-2i-2j-\eta_+\right)-md+j\\
    &\le\tfrac18(d-2m+1)(d-2m-1)+g.
  \end{align*}
With $m=k-\frac{d-3}{2}$, after straightforward calculations we obtain
\begin{align*}
\min_{0\le j\le g+\kappa_+} \left(R(kd+1-2j)+j\right)&\ge \frac{(k+1)(k+2)}{2},\\
\max_{0\le j\le g} \min_{0\le i\le \kappa_+-\eta_+} \left(R(kd+1-\eta_+-2i-2j)+i+j\right)&\le \frac{(k+1)(k+2)}{2}+g,\\
\end{align*}
completing the proof.
\end{proof}

The next corollary specifies Theorem~\ref{thm:genus_and_double} to the case of a rational curve with no $T(2,2n)$ singularities for $n>1$, that is, for a curve
with $m_n=0$ for $n>1$. In this case, $\eta_+=m_1=\kappa_+$.
\begin{corollary}\label{cor:no_t2n}
  Suppose a rational curve $C$ of degree $d$ has cuspidal singular points $z_1,\dots,z_\nu$, $\kappa_{+}$ ordinary double points
  and no other singularities.
  Then
  \begin{equation}\label{eq:rational}
    \begin{split}
      R(kd+1-\kappa_+)&\le \frac{(k+1)(k+2)}{2}\\
      \min_{0\le j\le \kappa_+} \left(R(kd+1-2 j)+j\right)&\ge \frac{(k+1)(k+2)}{2},
    \end{split}
  \end{equation}
  where $R$ is the convolution of the semigroup counting functions of critical points $z_1,\dots,z_\nu$.
\end{corollary}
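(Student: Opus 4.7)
The plan is to derive Corollary~\ref{cor:no_t2n} as an immediate specialization of Theorem~\ref{thm:genus_and_double}. The hypotheses of the corollary correspond to setting $g = 0$ (rationality) and $m_n = 0$ for all $n \geq 2$ (all non-cuspidal singularities are ordinary double points, i.e.\ $A_1$ singularities whose links are positive Hopf links $T(2,2)$).

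Under these restrictions, the combinatorial parameters of Theorem~\ref{thm:genus_and_double} simplify dramatically: one has
\[
\eta_+ = \sum_{n \geq 1} m_n = m_1 \quad\text{and}\quad \kappa_+ = \sum_{n \geq 1} n\, m_n = m_1,
\]
so that $\eta_+ = \kappa_+$ and, in particular, $\kappa_+ - \eta_+ = 0$.

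Next I would substitute $g = 0$ and $\kappa_+ - \eta_+ = 0$ into the two inequalities of Theorem~\ref{thm:genus_and_double}. In the first inequality, the outer maximum collapses to the single index $j = 0$ and the inner minimum collapses to the single index $i = 0$, yielding
\[
R(kd + 1 - \eta_+) \;\leq\; \frac{(k+1)(k+2)}{2}.
\]
Using $\eta_+ = \kappa_+$, this is precisely the first inequality of \eqref{eq:rational}. In the second inequality, setting $g = 0$ immediately reduces $\min_{0 \le j \le g + \kappa_+}$ to $\min_{0 \le j \le \kappa_+}$, giving the second inequality of \eqref{eq:rational} verbatim.

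No further work is required, since Theorem~\ref{thm:genus_and_double} is invoked as a black box. The only point to verify is that the curve $C$ genuinely satisfies the genus hypothesis \eqref{eq:genus_g} used implicitly in Theorem~\ref{thm:genus_and_double}; for a rational complex algebraic curve with the specified singularities this follows from the standard adjunction (genus) formula, since $g_3(K) = \sum_i g_3(K_i)$ for the connected sum of the cuspidal singularity knots and each ordinary double point contributes $1$ to $\kappa_+$, matching $\tfrac{(d-1)(d-2)}{2} - g_3(K) - \kappa_+ = 0$. Thus there is no substantive obstacle; the corollary is an entirely formal consequence of the main theorem.
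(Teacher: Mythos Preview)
Your proposal is correct and matches the paper's approach exactly: the paper introduces Corollary~\ref{cor:no_t2n} with the sentence ``The next corollary specifies Theorem~\ref{thm:genus_and_double} to the case of a rational curve with no $T(2,2n)$ singularities for $n>1$ \ldots\ In this case, $\eta_+=m_1=\kappa_+$,'' and provides no further argument. Your substitution $g=0$, $\kappa_+-\eta_+=0$, and $\eta_+=\kappa_+$ is precisely this specialization, and your remark on the adjunction formula confirming~\eqref{eq:genus_g} is a helpful sanity check that the paper leaves implicit.
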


On the other hand, specifying $m_n=0$ for all $n$, but allowing arbitrary $g$ recovers 
the following result
of Bodn\'ar, Borodzik, Celoria, Golla, Hedden and Livingston \cite{BCG,BHL}:
\begin{corollary}\label{cor:absolutely_no_t2n}
  Suppose $C$ is a cuspidal curve of genus $g$ and degree $d$. Let $R$ be the convolution of semigroup counting functions
  of the singular points of $C$. Then
  \begin{equation}\label{eq:absurd}
    \begin{split}
      \max_{0\le j\le g}\left( R(kd+1-2j)+j\right)&\le \frac{(k+1)(k+2)}{2}+g\\
      \min_{0\le j\le g}\left(R(kd+1-2j)+j\right)&\ge \frac{(k+1)(k+2)}{2}.
    \end{split}
  \end{equation}
\end{corollary}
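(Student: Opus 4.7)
The plan is to invoke Theorem~\ref{thm:genus_and_double} directly, specialized to the case where $C$ has no $T(2,2n)$ singularities for any $n\ge 1$. Since $C$ is cuspidal (i.e.\ all its singularities are unibranched), while each $T(2,2n)$ link has two components, none of the singularities of $C$ can be of $T(2,2n)$ type. Hence we may set $m_n=0$ for every $n\ge 1$ in the hypotheses of Theorem~\ref{thm:genus_and_double}, so that
\[
\eta_+=\sum_{n\ge 1} m_n=0\quad\text{and}\quad \kappa_+=\sum_{n\ge 1} n\,m_n=0.
\]

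Once this substitution is performed, both estimates collapse by inspection. In the first bound of Theorem~\ref{thm:genus_and_double}, the inner minimum ranges over $0\le i\le \kappa_+-\eta_+=0$, so only the term $i=0$ survives, and with $\eta_+=0$ we obtain
\[
\max_{0\le j\le g}\bigl(R(kd+1-2j)+j\bigr)\le \frac{(k+1)(k+2)}{2}+g,
\]
which is the first inequality of \eqref{eq:absurd}. In the second bound, the range $0\le j\le g+\kappa_+$ degenerates to $0\le j\le g$, giving at once
\[
\min_{0\le j\le g}\bigl(R(kd+1-2j)+j\bigr)\ge \frac{(k+1)(k+2)}{2},
\]
which is the second inequality of \eqref{eq:absurd}.

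The only matter to check, and hence the sole ``obstacle'', is that a cuspidal curve of genus $g$ and degree $d$ indeed falls under the setup of Subsection~\ref{sub:notation}; in particular that its geometric genus satisfies the formula~\eqref{eq:genus_g}. With $\kappa_+=\kappa_-=0$, this reduces to $g=\tfrac12(d-1)(d-2)-g_3(K)$, which is precisely the classical adjunction/genus formula for a cuspidal plane curve (the delta invariant of the total singular locus equals $g_3(K)$, the three-genus of the connected sum of the links of singularities). With this verification in place, Corollary~\ref{cor:absolutely_no_t2n} is just the restriction of Theorem~\ref{thm:genus_and_double} to the cuspidal setting.
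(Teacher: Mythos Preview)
Your proposal is correct and follows exactly the paper's approach: the paper does not even give a separate proof, but simply remarks that ``specifying $m_n=0$ for all $n$, but allowing arbitrary $g$'' in Theorem~\ref{thm:genus_and_double} recovers this result. Your verification that the genus formula~\eqref{eq:genus_g} reduces to the adjunction formula in the cuspidal case is the only point that needs noting, and you have addressed it appropriately.
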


\subsection{Comparing \eqref{eq:rational} and~\eqref{eq:absurd}}\label{sub:edge_of_rational_and_absurd}
We stop for a moment to compare the estimates in \eqref{eq:rational} and~\eqref{eq:absurd}.
There is a general question: given a surface $\Sigma$ in a closed four-dimensional
manifold $X$ with $g(\Sigma)>0$, can we deform $\Sigma$ to a surface $\Sigma'$ of smaller genus, at the cost
of introducing more singularities.
A special case of this question is the following. Suppose $C$ is an algebraic curve in $\CP^2$ of genus $g$. Does there exist
an algebraic curve $C'$ with (topologically) the same singularities as $C$, except that it has one more double point and genus one smaller?

From our perspective, the difference between $C$ and $C'$ is reflected by the different shape of inequalities \eqref{eq:rational}
and \eqref{eq:absurd}. To be more concrete, suppose $C$ has genus $1$ and let $R$ be the convolution
of the semigroup counting functions for cuspidal singularities of $C$. The restrictions for $R$ from \eqref{eq:absurd}
are:
\begin{equation}\label{eq:absurd_rewritten}
  R(kd-1)\in\{K-1,K\},\ R(kd+1)\in\{K,K+1\},
\end{equation}
where $k=1,\dots,d-2$ and $K=\frac12(k+1)(k+2)$. On the other hand, the restrictions for
$R$ from \eqref{eq:rational} (i.e. those for $C'$) are:
\begin{equation}\label{eq:insane_1} R(kd)\le K, \quad \min(R(kd+1),R(kd-1)+1)\ge K.\end{equation}
By using the fact that $R(i+1)-R(i)\in\{0,1\}$, the inequalities \eqref{eq:insane_1} imply:
\begin{equation}\label{eq:rational_rewritten}
  R(kd-1)\in\{K-1,K\},\ R(kd+1)\in\{K,K+1\},\ R(kd)\le K.
\end{equation}

The inequality $R(kd)\le K$ does not follow from \eqref{eq:absurd_rewritten} and provides an obstruction
for trading genus to a double point.

\begin{example}\label{ex:fg_cases}
  Equation~\eqref{eq:absurd} does not obstruct the existence of a degree $27$ curve with genus $1$ and a single singularity whose link is the $T(10,73)$ torus knot.
  Likewise, it does not obstruct the existence of a genus $1$ curve of degree $33$ with link of singularity $T(12,91)$; compare \cite[Theorem 9.1f and g]{BHL}.

  However, using~\eqref{eq:rational_rewritten} we obstruct the existence of a rational curve with a single double point in the above cases: a degree $27$ curve with $T(10,73)$-type singularity does not exist,
  likewise a degree $33$ curve with $T(12,91)$ singularity does not exist.
  In the first case the relevant value of $k$ is $12$; in the second it is $7$.

  We stress that we have no explicit constructions of the genus~1 curves in this example. We do not know if these genus~1 curves exist as complex curves, or even smooth curves of algebraic type.
\end{example}

\begin{example}[Open Problem]\label{ex:orevkov}
  Let $\phi_0=0,\phi_1=1$, $\phi_{n}=\phi_{n-1}+\phi_{n-2}$ be the Fibonacci sequence.
  In \cite[Proposition 9.2]{BHL}, based on a construction of Orevkov \cite{Orevkov} there was constructed a family of genus~1 cuspidal curves of degree $\phi_{4n}$ with a
  single singularity whose link is the $T(\phi_{4n-2},\phi_{4n+2})$ torus knot ($n=2,3,\dots$)

  Does there exist a genus zero curve $C_n\subset\CP^2$ of degree $\phi_{4n}$ with a singularity whose link is the $T(\phi_{4n-2},\phi_{4n+2})$ torus knot and a single positive double point?
\end{example}
\begin{remark}
  Theorem~\ref{thm:genus_and_double} does not obstruct the existence of a curve from Example~\ref{ex:orevkov}. An inductive argument involving Cremona transformation indicates that if $C_n$ exists for a single $n$,
  then it exists for all $n$; compare \cite[Section 6]{Orevkov}.

  It is known that $C_n$ cannot possibly exist in the algebraic category (refer to Subsection~\ref{sub:categories}). There are two different arguments.
  First, for large $n$, $C_n$ would violate the Bogomolov--Miyaoka--Yau 
  inequality (see \cite{Orevkov} or \cite[Section 9.2]{BHL} for exemplary applications of the BMY inequality for
  complex plane curves).  Secondly, the existence of $C_n$ implies, by passing to $C_1$, the existence of a rational cubic that is tangent to a given line with tangency order $8$
  (by an analog to the construction of \cite[Proposition 9.2]{BHL}). Explicit calculations of possible coefficients of such cubic lead to a contradiction.

  The existence of $C_n$ is related to the question on whether there exists an analog of a BMY inequality in the smooth category; see \cite{Kollar}.
\end{remark}

\subsection{Negative double points}\label{sub:genus_0}

We now specify to the case where $C$ is a surface which has a single algebraic singularity and $\um_n\ge 0$ singular points whose links are negative
$T(2,2n)$ torus links.

\begin{theorem}\label{thm:double_neg}
  Suppose $C$ is a genus $g$ degree $d$
  singular curve in the smooth category as in Definition~\ref{def:singular_curve} with a cuspidal singular point $z$,
  $\um_n$ singularities whose link is $-T(2,2n)$ for each $n\geq 1$, and no other singular points. Suppose further that the
  genus of $C$ is given by \eqref{eq:genus_g}.

 Then, for any $k=1,\dots,d-2$, we have
 \begin{align*}
\max_{0\le j\le g+\kappa_-} \left(R(kd+1-2j)+j\right)&\le \frac{(k+1)(k+2)}{2}+g+\kappa_{-},\\
\min_{0\le i\le g}\max_{0\le j\le\kappa_--\eta_-} \left(R(kd+1-2i-2j-\eta_-)+i+j\right)&\ge \frac{(k+1)(k+2)}{2}+\kappa_{-}-\eta_{-},
\end{align*}
 where $R$ is the semigroup counting function for the singular point $z$, and $\eta_-=\sum \um_n, \kappa_-=\sum \um_n n$.
\end{theorem}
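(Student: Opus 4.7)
The plan mirrors the proof of Theorem~\ref{thm:genus_and_double}, but replaces parts~(a),~(b) of Proposition~\ref{prop:gather_all_we_need} with parts~(c),~(d). First I would construct the tubular neighborhood $N$ of $C$ as in Subsection~\ref{sub:tubular} and observe that, since $C$ is irreducible with one unibranched and $\eta_-$ two-branched singular points, $e = 1$ and \eqref{eq:define_n} gives $\rho = 2g + \eta_-$; thus $Y = \partial N$ is $d^2$-surgery on $\widehat{K}$ inside $\#^{2g + \eta_-} S^2 \times S^1$. The identification of the \spinc{} structures extending over $X = \CP^2 \setminus \Int N$ is identical to the one in the proof of Theorem~\ref{thm:genus_and_double}: they are precisely the $\sss_{md}$ for the appropriate set of half-integers $m$. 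Plugging these into Proposition~\ref{prop:main_estimate} yields the bounds $d_\top(Y, \sss_{md}) \le g + \eta_-/2$ and $d_\bot(Y, \sss_{md}) \ge -g - \eta_-/2$.

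Next I would invoke the large surgery formula (Theorem~\ref{thm:vtop}) to convert these $d$-invariant bounds into bounds on $V^{\top}_{md}(\widehat{K})$ and $V^{\bot}_{md}(\widehat{K})$, and then apply Proposition~\ref{prop:gather_all_we_need}(c),(d) to express the latter in terms of the semigroup counting function $R$. The hypotheses of those two parts are met here: since $C$ has only one cuspidal singularity, the link $K$ is an L-space knot, so $\cC(K)$ is a positive \emph{staircase} (not merely a multi-staircase); and because $\kappa_+ = \eta_+ = 0$, one has $\delta_1 = -(\kappa_- - \eta_-) \le 0$ and $\delta_2 = -\kappa_- \le 0$.

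The final step is routine bookkeeping. I would substitute $m = k - \tfrac{d-3}{2}$, the same change of variable used at the end of Theorem~\ref{thm:genus_and_double}, and then plug in the genus formula \eqref{eq:genus_g}, which in this setting reads $g_3(K) = \tfrac{(d-1)(d-2)}{2} - g - \kappa_-$. The shifts inside $R$ collapse so that the arguments become $kd + 1 - 2j$ or $kd + 1 - 2i - 2j - \eta_-$, and the quadratic expression $\tfrac{1}{8}(d - 2m + 1)(d - 2m - 1)$, once the affine $-md$ terms are absorbed, becomes $\tfrac{(k+1)(k+2)}{2}$ plus the claimed additive constants $g + \kappa_-$ and $\kappa_- - \eta_-$, respectively. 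The main conceptual point — and the reason the theorem is restricted to a single cuspidal singularity — is precisely the restrictive hypothesis of Proposition~\ref{prop:gather_all_we_need}(c),(d) that $\cC(K)$ be a single positive staircase; Subsection~\ref{sub:counter} shows the corresponding identity fails for tensor products of two or more positive staircases, so the strategy does not extend naively to curves combining several cusps with negative double points.
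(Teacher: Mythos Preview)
Your proposal is correct and follows essentially the same approach as the paper's own proof: construct $Y$ as $d^2$-surgery on $\widehat{K}\subset\#^{2g+\eta_-}S^2\times S^1$, invoke Proposition~\ref{prop:main_estimate} and the large surgery formula to obtain the analogue of~\eqref{eq:we_resume_from_here} with $\eta_-$ in place of $\eta_+$, then apply parts~(c) and~(d) of Proposition~\ref{prop:gather_all_we_need} and perform the substitution $m=k-\tfrac{d-3}{2}$. Your explicit verification that $\cC(K)$ is a single positive staircase (because $\nu=1$) and that $\delta_1,\delta_2\le 0$ is exactly the point the paper leaves implicit, and your closing remark about why the single-cusp hypothesis is essential (via Subsection~\ref{sub:counter}) matches the paper's motivation.
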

\begin{proof}
  The beginning of the proof is exactly the same as in the proof of Theorem~\ref{thm:genus_and_double}.
  The boundary $Y$ of the tubular neighborhood of $C$ is a result of a surgery with coefficient $d^2$
  on the knot $\wh{K}$ in $\#^{2g+\eta_-}S^2\times S^1$. In particular, \eqref{eq:we_resume_from_here} holds
  with $\eta_-$ replacing $\eta_+$:
  \begin{equation}\label{eq:resumed}
    \begin{split}
    V^{\textrm{top}}_{md}(\widehat{K})&\ge \frac{(d-2m+1)(d-2m-1)}{8}-\frac{\eta_-}{4}-\frac{g}{2}\\
    V^{\textrm{bot}}_{md}(\widehat{K})&\le \frac{(d-2m+1)(d-2m-1)}{8}+\frac{\eta_-}{4}+\frac{g}{2}.
  \end{split}
  \end{equation} 
  With $g_3(K)=\tfrac12(d-1)(d-2)-g-\kappa_-$,
  equations of Proposition~\ref{prop:gather_all_we_need} (c) and~(d) take the form:
  \begin{align*}
    V^{\top}_{md}(\widehat{K})&=\frac{g}{2}-\frac{\eta_-}{4}+
    \min_{0\le i\le g}\max_{0\le j\le\kappa_--\eta_-} \big(R\big(\tfrac{(d-1)(d-2)}{2}+md-2j-2i-\eta_-\big)\\
    &-(md-i-j+g+\kappa_--\eta_-)\big)\\
    V^{\bot}_{md}(\widehat{K}) &=\frac{g}{2}+\frac{\eta_-}{4}+
    \max_{0\le j\le g+\kappa_-} \big(R\big(\tfrac{(d-1)(d-2)}{2}+md-2j\big)\\
    &-(md-j+g+\kappa_-)\big).
   \end{align*}
   Comparing this with \eqref{eq:resumed}, after analogous changes as in Subsection~\ref{sub:genus_pos}, we arrive at
\begin{align*}
\max_{0\le j\le g+\kappa_-} \left(R(kd+1-2j)+j\right)&\le \frac{(k+1)(k+2)}{2}+g+\kappa_{-},\\
\min_{0\le i\le g}\max_{0\le j\le\kappa_--\eta_-} \left(R(kd+1-2i-2j-\eta_-)+i+j\right)&\ge \frac{(k+1)(k+2)}{2}+\kappa_{-}-\eta_{-},
\end{align*}
\end{proof}
\begin{corollary}\label{cor:insane}
  Suppose $C$ is a genus $g$ degree $d$ curve with a singular point $z$ and $\eta_-$ negative double points.
  Assume that $C$ has genus as in \eqref{eq:genus_g}. Then, for $k=1,\dots,d-2$:
\begin{align*}
  \max_{0\le j\le g+\eta_-} (R(kd+1-2j)+j)&\le \frac{(k+1)(k+2)}{2}+g+\eta_-\\
  \min_{0\le j\le g} (R(kd+1-\eta_--j)+j)&\ge \frac{(k+1)(k+2)}{2}.
\end{align*}
\end{corollary}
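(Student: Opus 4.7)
The plan is to obtain Corollary~\ref{cor:insane} as an immediate specialization of Theorem~\ref{thm:double_neg} to the case where every non-cuspidal singularity of $C$ is an ordinary negative double point. In the notation of Section~\ref{sub:notation}, this amounts to setting $\um_1 = \eta_-$ and $\um_n = 0$ for all $n \ge 2$. The quantities appearing in Theorem~\ref{thm:double_neg} then simplify: $\kappa_- = \sum_n n\um_n = \um_1 = \eta_-$, so in particular $\kappa_- - \eta_- = 0$ and $g + \kappa_- = g + \eta_-$.

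Granted this substitution, the first inequality of Theorem~\ref{thm:double_neg} reads
\[
\max_{0 \le j \le g+\eta_-} \bigl(R(kd+1-2j) + j\bigr) \le \frac{(k+1)(k+2)}{2} + g + \eta_-,
\]
which is exactly the first claim. For the second inequality, the inner range $0 \le j \le \kappa_- - \eta_-$ collapses to the single value $j = 0$, so the double extremum reduces to a plain minimum over $i$ and the additive constant $\kappa_- - \eta_-$ on the right-hand side vanishes. This yields
\[
\min_{0 \le i \le g} \bigl(R(kd+1-\eta_- - 2i) + i\bigr) \ge \frac{(k+1)(k+2)}{2},
\]
which (after relabelling $i$ as $j$) is the second claim.

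There is essentially no obstacle here, since the argument is pure substitution; the only thing I would verify explicitly is that the hypotheses of Theorem~\ref{thm:double_neg} are genuinely satisfied by $C$. The assumption that the genus is given by \eqref{eq:genus_g} is already built into the statement of the corollary, and the requirement that $C$ has a single cuspidal point together with $\eta_-$ negative double points exactly matches the setup of Theorem~\ref{thm:double_neg} with $\um_1 = \eta_-$ and $\um_n = 0$ for $n \ge 2$. No additional Floer-theoretic input is required beyond what is already encoded in Theorem~\ref{thm:double_neg}.
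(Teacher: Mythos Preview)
Your proof is correct and follows exactly the same approach as the paper, which simply notes that the result follows from Theorem~\ref{thm:double_neg} with $\kappa_-=\eta_-$. One small remark: your substitution actually yields $\min_{0\le j\le g}\bigl(R(kd+1-\eta_--2j)+j\bigr)\ge \tfrac{(k+1)(k+2)}{2}$, with $-2j$ rather than the $-j$ printed in the corollary; this appears to be a typo in the statement (and is invisible in the paper's subsequent application, which takes $g=0$).
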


\begin{proof}
This follows from Theorem \ref{thm:double_neg}, noting that $\kappa_{-}=\eta_{-}$ in our present situation.
\end{proof}
\subsection{Comparing Theorem~\ref{thm:double_neg} with Corollary~\ref{cor:absolutely_no_t2n}}\label{sub:another_edge}

As in Subsection~\ref{sub:edge_of_rational_and_absurd} we compare obstructions from Corollary~\ref{cor:absolutely_no_t2n}
and Theorem~\ref{thm:double_neg}. 
To this end, suppose $C$ is a genus zero curve with a singular point $z$ and a single negative double
point. Corollary~\ref{cor:insane} implies that for $k=1,\dots,d-2$ we have
\begin{equation}\label{eq:insane_2}
  R(kd-1)+1,R(kd+1)\le K+1,\ R(kd)\ge K, 
\end{equation}
where $K=\tfrac{(k+1)(k+2)}{2}$. 
The formula \eqref{eq:insane_2} bears a strong resemblance to~\eqref{eq:insane_1}. There is one difference.
In \eqref{eq:insane_1}, that is, for curve with a positive double point, we require that $R(kd)\le K$.
For curves with a negative double point, we have $R(kd)\ge K$. Note that the genus~1 case,
\eqref{eq:absurd_rewritten} admits values of $R(kd)$ equal to $K-1,K,K+1$. If $R(kd)=K-1$, we cannot
trade the genus for a negative double point; if $R(kd)=K+1$, we cannot trade the genus for
a positive double point.

\begin{example}\label{ex:orevkov_neg}
  The Orevkov curves of Example~\ref{ex:orevkov} are genus $1$ curves, but
  the genus cannot be traded for negative double points. A classical identity on Fibonacci numbers $\phi_{k-2}+\phi_{k+2}=3\phi_{k}$
  shows that the semigroup generated by $\phi_{4n-2}$ and $\phi_{4n+2}$ has precisely 9 elements in the interval $[0,3\phi_{4n})$.
  These are $0,\phi_{4n-2},\dots,7\phi_{4n-2}$ and $\phi_{4n+2}$. In particular $R(3\phi_{4n})=9<\frac{(3+1)(3+2)}{2}$, violating
  the last condition of \eqref{eq:insane_2}. 

  For example, for $n=2$, we have $d=21$ and $R(s)$ is the number
  of elements less than $s$ in the semigroup generated by $8$ and $55$. For $k=3$ we have $R(21\cdot 3)=9$, the relevant elements
  in the semigroup being $[0, 8, 16, 24, 32, 40, 48, 55, 56]$.
\end{example}

\section{Algebraic curves with $A_n$ singularities}\label{sec:an}

Our next application deals with curves with $A_n$ singularities.
Recall that the $A_n$ singularity is a singularity whose link is a $T(2,n+1)$ torus link/knot. The shift in the notation is due to the fact that the $A_n$
singularity has Milnor number $n$. $A_{2k}$ singularities are cuspidal, while $A_{2k+1}$ singularities consist of two smooth branches with tangency. The family of curves with only $A_n$ singularities encompasses all nodal curves (having only double points as singularities) and curves with nodes and ordinary cusps (having only double points
and positive trefoil knots  as singularities). There is a significant research on such curves, see e.g. \cite{GLS2,Gusein, Libgober}. We refer to \cite{GLS} for more general setting.
In the present section, we give bounds on the number of double
points and cusps from Heegaard Floer theory. These bounds are weaker than those obtained from
semicontinuity of the spectrum. 

Throughout the section, we let $R_n$ denote the $R$ function of a $T(2,2n+1)$ torus knot. More concretely
\begin{equation}\label{eq:Rn_def}
  R_n(k)=\begin{cases}
    0 & k\le 0\\
    \left\lfloor \frac{k+1}{2} \right\rfloor & k<2n+1\\
    k-n& k\ge 2n+1.
  \end{cases}
\end{equation}

\begin{theorem}\label{prop:number_of_cusps}
  Suppose $C$ is a curve with $a$ ordinary double points, $b$ cusps and degree $d$. Then
\[b\le \frac12(d-1)(d-2)-\frac12 A(d)-\frac12a,\]
where 
\begin{equation}
A(d) = \begin{cases} \frac{d^2-6d+5}{4}& \text{ if $d$ is odd}\\
\frac{d^2-6d+4}{4} & \text{ if $d$ is even}.
\end{cases}
\end{equation}

\end{theorem}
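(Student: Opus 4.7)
The plan is to specialize Theorem~\ref{thm:genus_and_double} to the present setting and combine it with an explicit formula for the $b$-fold infimal convolution of the trefoil semigroup counting function. In the notation of Section~\ref{sub:notation}, the $b$ ordinary cusps of $C$ are unibranched singularities with link $T(2,3)$ and common semigroup counting function $R_1$ given by \eqref{eq:Rn_def}, while the double points contribute $m_1=a$ and $m_n=0$ for $n\ge 2$, so that $\eta_+=\kappa_+=a$. The adjunction formula then gives $g=\tfrac{1}{2}(d-1)(d-2)-a-b$.

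First I would compute the convolution $R=R_1\diamond\cdots\diamond R_1$ ($b$-fold) explicitly. Since the only gap of the cusp semigroup $\{0,2,3,\dots\}$ is at $1$, distributing $N$ into $\lfloor N/2\rfloor$ summands equal to $2$ (with a parity adjustment) optimizes the convolution, yielding
\[
R(N)=\begin{cases} 0 & N\le 0,\\ \lceil N/2\rceil & 0\le N\le 2b,\\ N-b & N\ge 2b.\end{cases}
\]
This is the statement of the algebraic lemma alluded to in the introduction as Proposition~\ref{thm:Rm-bound}. The key consequence is the middle-regime bound $R(N)\ge N/2$ for $0\le N\le 2b$.

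Next I would apply the first (upper-bound) inequality of Theorem~\ref{thm:genus_and_double}. Since $\kappa_+-\eta_+=0$ the inner minimum collapses; specializing to $j=g$ yields $R(kd+1-a-2g)\le K$, where $K=(k+1)(k+2)/2$. I would choose $k=(d-3)/2$ for $d$ odd and $k=(d-2)/2$ for $d$ even. A short arithmetic computation using the genus formula shows that $N:=kd+1-a-2g=a+2b-C$, where $C=(d-1)(d-2)/2$ in the odd case and $C=(d^2-4d+2)/2$ in the even case. Substituting the middle-regime bound $R(N)\ge N/2$ gives $a+2b\le C+2K$, and plugging in the values of $K$ and $C$ rearranges to the claimed inequality in either parity.

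The main obstacle will be the case analysis arising when $N$ falls outside the middle regime $[0,2b]$. The case $N<0$ is automatic from $g\ge 0$, since then $a+2b<C$ is already stronger than the target. The genuine edge case is $N>2b$, which can only occur for $d$ even, triggered precisely when $a>(d^2-4d+2)/2$; in that regime one has $R(N)=N-b$ and Theorem~\ref{thm:genus_and_double} gives $a+b\le (5d^2-14d+8)/8$, which when combined with the triggering condition still forces $a+2b\le (3d^2-6d+4)/4$, as desired. Small-degree corner cases $d\in\{2,3,4\}$ can be verified by inspection.
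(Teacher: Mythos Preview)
Your approach is correct and follows essentially the same line as the paper's: both specialize Theorem~\ref{thm:genus_and_double} to this configuration, identify the $b$-fold convolution of the trefoil $R$-function with $R_b$ from~\eqref{eq:Rn_def}, and extract the bound by evaluating at $k$ near $(d-3)/2$. The only organizational difference is that the paper takes $j=0$ in the first inequality of~\eqref{eq:genus_and_double} and then packages the optimization over $k$ (including your edge-case analysis) into the equivalence Proposition~\ref{thm:Rm-bound}, whereas you take $j=g$ and carry out the computation inline; after simplification both routes reduce to the same quadratic bound $-k^2+(d-3)k-1\le 2g+a$. One small misattribution: Proposition~\ref{thm:Rm-bound} is not the closed formula $R=R_b$ you state, but rather the characterization of when $R_m(kd+1-\eta)\le K+g$ holds for all $k$; the fact that the $b$-fold convolution of $R_1$ equals $R_b$ is a separate (and easier) observation.
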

\begin{proof}
Observe that the $R$-function for the connected sum of $b$ trefoils is $R=R_b$.  This  can be deduced as a consequence of Proposition~\ref{prop:local_equivalence}, or  follows from more general statements of Bodn\'ar, Feller, Krcatovich and N\'emethi \cite{BodnarNemethi, Feller_Krcatovich}.  The genus of $C$ is equal to $g=\frac12(d-1)(d-2)-a-b$. By Theorem~\ref{thm:genus_and_double} we obtain, for any $k=1,\dots,d-2$:
  \begin{align*}
      \max_{0\le j\le g} (R_b(kd+1-a-2j)+j)&\le \frac{(k+1)(k+2)}{2}+g\\
      \min_{0\le j\le g+a} (R_b(kd+1-2j)+j)&\ge \frac{(k+1)(k+2)}{2}.
    \end{align*}
    Consider the first inequality of the two (see Remark~\ref{rem:remark_below} below for the second inequality).
    By letting $j=0$, the first inequality implies that
    \[R_b(kd+1-a)\le \frac12(k+1)(k+2)+g\]
for all $k=1,\dots,d-2$. By the subsequent Proposition \ref{thm:Rm-bound}, this  is equivalent to
\[2g+a\ge A(d).\]
Combining this with the genus formula we obtain:
\[b\le \frac12(d-1)(d-2)-\frac12 A(d)-\frac12a.\]
\end{proof}

Note that for $a=0$, we have $b\le \sim 0.375 d^2$. Varchenko's bound using semicontinuity of the spectrum gives
$b\le \sim \frac{23}{72}d^2$; the latter was reproved by the first author for singular curves in the smooth category as well, see
\cite{Borodzik_Morse}. The best bound that is known to the authors is due to Langer \cite{Langer}. It says that 
$b\le\sim \frac{125+\sqrt{73}}{432}d^2$. In \cite{CPS}, there is constructed a family of curves with $b=\frac{2567}{8640}d^2$
(for some infinite sequence of $d$).

\smallskip
Our next result concerns the maximal $n$ for which there can exist a curve of degree $d$ having an $A_n$ singularity.
This is also a classical problem in the theory of plane curves.
\begin{theorem}\label{prop:max_An_fixed_degree}
  Suppose $C$ is a degree $d$ curve with an $A_{2n}$ singularity and $h$ double points. Then 
  \[n\le \frac12(d-1)(d-2)-\frac12A(d)-\frac{h}{2}\]
  where $A(d)$ is the same function as the one in Theorem \ref{prop:number_of_cusps}. 
\end{theorem}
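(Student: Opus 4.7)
The plan is to follow the exact template of the proof of Theorem~\ref{prop:number_of_cusps}, specialized to the setting where the unique cuspidal singularity is an $A_{2n}$ (so its link is $T(2,2n+1)$, a single L--space knot of genus $n$) and all other singular points are $h$ ordinary double points. First I would set up notation for Theorem~\ref{thm:genus_and_double}: take $\nu=1$ with $R=R_n$ (the semigroup counting function of $T(2,2n+1)$, as in \eqref{eq:Rn_def}), and set $m_1=h$ with $m_j=0$ for $j>1$, so that $\eta_+=\kappa_+=h$ and hence $\kappa_+-\eta_+=0$. The geometric genus formula for a plane curve with one $A_{2n}$ point (contributing $\delta=n$) and $h$ nodes (each contributing $\delta=1$) gives
\[
g=\tfrac{1}{2}(d-1)(d-2)-n-h,
\]
which is of the form required by \eqref{eq:genus_g}.

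Next I would invoke the first inequality of Theorem~\ref{thm:genus_and_double}. Because $\kappa_+-\eta_+=0$, the inner $\min_{0\le i\le \kappa_+-\eta_+}$ is trivial and the bound reduces to
\[
\max_{0\le j\le g}\bigl(R_n(kd+1-h-2j)+j\bigr)\le \tfrac{(k+1)(k+2)}{2}+g
\]
for each $k=1,\dots,d-2$. Keeping only the $j=0$ summand gives the single inequality
\[
R_n(kd+1-h)\le \tfrac{(k+1)(k+2)}{2}+g,\qquad k=1,\dots,d-2,
\]
which is exactly the hypothesis that Proposition~\ref{thm:Rm-bound} is designed to convert into a closed-form bound.

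Applying Proposition~\ref{thm:Rm-bound} (used in precisely the same way in the proof of Theorem~\ref{prop:number_of_cusps}, but with the genus parameter of the underlying torus knot equal to $n$ rather than $b$) yields the arithmetic condition
\[
2g+h\ge A(d).
\]
Substituting $g=\tfrac{1}{2}(d-1)(d-2)-n-h$ gives
\[
(d-1)(d-2)-2n-h\ge A(d),
\]
and solving for $n$ produces the stated bound
\[
n\le \tfrac{1}{2}(d-1)(d-2)-\tfrac{1}{2}A(d)-\tfrac{h}{2}.
\]

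The only nontrivial ingredient is Proposition~\ref{thm:Rm-bound}, which is a purely algebraic/combinatorial statement about the function $R_m$ defined in \eqref{eq:Rn_def}; the Heegaard Floer input (Theorem~\ref{thm:genus_and_double}) is essentially applied as a black box, and no work beyond the genus formula and simple rearrangement is required. Consequently the main step, and the only place where one could lose in the bound, is invoking Proposition~\ref{thm:Rm-bound}; I would expect the sharpness to be governed by the choice $k\approx(d-2)/2$ where the quadratic bound meets the linear growth of $R_n$, as in the analogous analysis for Theorem~\ref{prop:number_of_cusps}.
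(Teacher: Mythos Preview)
Your proposal is correct and follows the paper's proof essentially verbatim: set $\eta_+=\kappa_+=h$, use the first inequality of Theorem~\ref{thm:genus_and_double} at $j=0$, and feed the result into Proposition~\ref{thm:Rm-bound}. The only extra sentence in the paper is a preliminary remark that if $C$ happens to have additional singularities beyond the $A_{2n}$ point and the $h$ nodes, one first smooths them (increasing the genus), which only makes the eventual inequality easier to satisfy; after that reduction the argument is exactly yours.
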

\begin{proof}
  If $C$ has other singularities than the $A_{2n}$ singularity and double points, we can smooth them (in the smooth or algebraic category)
to obtain a curve with an $A_{2n}$ singularity and double points. The genus of the curve is $g=\frac12(d-1)(d-2)-n-h$. The $R$
function for the $A_{2n}$ singularity is $R_n$.
 Acting exactly as in the proof of Theorem~\ref{prop:number_of_cusps}
we obtain the statement.
\end{proof}

The bound of Theorem~\ref{prop:max_An_fixed_degree} is assymptotically $2n\sim\frac{3}{4}d^2$, which is the same as the one
obtained from the signature function. Explicit constructions give families of curves with $2n\sim \frac{15}{28}d^2$
\cite{Gusein}, $2n\sim\frac{7}{12}d^2$ \cite{Orev_real} and $2n\sim (4-2\sqrt{3})d^2$ \cite{Cassou}.

\begin{remark}\label{rem:remark_below}
  In this section, we have been using only the first inequality in \eqref{eq:genus_and_double}. We will
  now argue that the second inequality is trivially satisfied for curves with only $A_{2n}$ singularities and cusps.
  Suppose curve $C$ has genus $g$, $h$ double points and the $R$ function is $R_m$ for some $m$.
  The second inequality of \eqref{eq:genus_and_double}, after some calculations
  reads:
  \[R_{m+g+h}(kd+1)\ge \frac12(k+1)(k+2).\]
  Note that $m+g+h=\frac12(d-1)(d-2)$, so we eventually obtain $R_{(d-1)(d-2)/2}\ge (k+1)(k+2)/2$. The inequality is independent on $m,g,h$, which is immediate to verify and trivially satisfied by any
  non-singular complex curve with genus $g$ and degree $d$.
\end{remark}

In the rest of this  section, we prove Proposition~\ref{thm:Rm-bound}, which was a technical step in the proof of Theorems~\ref{prop:number_of_cusps} and~\ref{prop:max_An_fixed_degree}.
\begin{proposition}
\label{thm:Rm-bound}
 Suppose $d\ge 4$ and $\eta,g\ge 0$. Set
\begin{equation}
m=\frac{(d-1)(d-2)}{2} -\eta-g,
\label{eq:md_def}
\end{equation}
and suppose further that $m>0$. Then
\begin{equation}
R_m(kd+1-\eta)\le \frac{(k+1)(k+2)}{2}+g \label{eq:Rmd}
\end{equation}
for all $k\in \{1,\dots, d-2\}$ if and only if 
\begin{equation}
2g+\eta \ge \begin{cases} \frac{d^2-6d+5}{4}& \text{ if $d$ is odd}\\
\frac{d^2-6d+4}{4} & \text{ if $d$ is even}.
\end{cases}
\label{eq:main-algebraic-inequality}
\end{equation}
\end{proposition}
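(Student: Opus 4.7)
The plan is to reduce the inequality \eqref{eq:Rmd} to an elementary quadratic condition on $k$ by analyzing $R_m$ piecewise according to \eqref{eq:Rn_def}. Writing $s := kd+1-\eta$, the case $s \leq 0$ is trivial. In the case $s \geq 2m+1$, substituting \eqref{eq:md_def} into \eqref{eq:Rmd} reduces the inequality to $0 \leq (k-d+1)(k-d+2)$, which holds for every $k \in \{1, \ldots, d-2\}$. Hence only the middle range $1 \leq s \leq 2m$ requires a nontrivial check.

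For $s$ in the middle range, let $K := (k+1)(k+2)/2 + g \in \Z$. Since $\lfloor (s+1)/2 \rfloor \leq K$ is equivalent to $s \leq 2K$, a short manipulation rewrites \eqref{eq:Rmd} as
\[
k(d-k-3) \leq \eta + 2g + 1.
\]
So \eqref{eq:Rmd} holds for every $k \in \{1, \ldots, d-2\}$ if and only if the displayed inequality holds for every such $k$ lying in the middle range.

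Next I would compute $M(d) := \max_{k \in \{1, \ldots, d-2\}} k(d-k-3)$. The quadratic is maximized over $\R$ at $k = (d-3)/2$. For $d$ odd this is an integer and $M(d) = (d-3)^2/4$; for $d$ even the two nearest integers $k \in \{(d-4)/2, (d-2)/2\}$ give $M(d) = (d-2)(d-4)/4$. In both parities one verifies the identity $M(d) = A(d)+1$. The equivalence then follows: if $\eta + 2g \geq A(d)$, then $k(d-k-3) \leq M(d) = A(d)+1 \leq \eta+2g+1$ for every $k$, so the displayed inequality holds. Conversely, if $\eta + 2g \leq A(d)-1$, any integer maximizer $k^*$ attains $k^*(d-k^*-3) = A(d)+1 \geq \eta + 2g + 2$, violating the displayed inequality.

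The main obstacle is confirming that the chosen maximizer $k^*$ actually lies in the middle range, i.e., that $\eta \leq k^* d$ and $k^* d + \eta + 2g \leq d^2-3d+1$. The cases $d \in \{4, 5\}$ are vacuous since $A(d)-1 < 0$. For $d \geq 6$, combining $k^* \leq (d-2)/2$ with the standing bound $\eta+2g \leq A(d)-1$, both inequalities reduce to elementary polynomial bounds in $d$ that I would verify by completing the square.
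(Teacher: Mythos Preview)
Your proposal is correct and follows essentially the same approach as the paper's proof. Both arguments split according to the three ranges of $R_m$, observe that the outer two ranges are automatic (the paper isolates this as a separate lemma, you do it inline), reduce the middle range to the quadratic inequality $-k^2+(d-3)k-1\le 2g+\eta$ (equivalently your $k(d-k-3)\le \eta+2g+1$), locate its integer maximum at $k^*\in\{\lfloor(d-3)/2\rfloor,\lceil(d-3)/2\rceil\}$, and then verify that when $2g+\eta<A(d)$ this $k^*$ necessarily falls in the middle range. The only cosmetic difference is that the paper frames the last step contrapositively (if $k^*$ is out of range then $2g+\eta\ge A(d)$ automatically), whereas you argue directly; the computations are the same.
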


We first prove that if $kd+1-\eta$ is outside of the range $0,\dots, 2m+1$, then~\eqref{eq:Rmd} is always satisfied:

\begin{lemma}\label{lem:min_val}
Suppose $m$ is as in the statement of Proposition~\ref{thm:Rm-bound} and $1\le k\le d-2$. If $kd+1-\mh\ge 2\md+1$ or $kd+1-\mh\le 0$, then
  \[
    R_{\md}(kd+1-\mh)\le\frac{(k+1)(k+2)}{2}+\mb.
  \]
\end{lemma}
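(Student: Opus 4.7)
The plan is to split into the two cases allowed by the hypothesis and use the explicit formula \eqref{eq:Rn_def} for $R_m$ in each.

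First, if $kd+1-\eta \le 0$, then by definition $R_m(kd+1-\eta)=0$, and since $\tfrac{(k+1)(k+2)}{2}+g \ge 0$ the inequality is immediate.

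The substantive case is $kd+1-\eta \ge 2m+1$. Here the third branch of \eqref{eq:Rn_def} applies, giving
\[
R_m(kd+1-\eta)=kd+1-\eta-m.
\]
Substituting the defining formula $m=\tfrac{(d-1)(d-2)}{2}-\eta-g$ from \eqref{eq:md_def}, the desired inequality
\[
kd+1-\eta-m \le \tfrac{(k+1)(k+2)}{2}+g
\]
simplifies (the $\eta$ and $g$ cancel) to
\[
kd+1 \le \tfrac{(k+1)(k+2)}{2}+\tfrac{(d-1)(d-2)}{2},
\]
i.e., after clearing denominators and rearranging,
\[
0 \le k^2-2kd+d^2+3k-3d+2 = (k-d+1)(k-d+2).
\]
Since $1\le k\le d-2$, both factors $k-d+1$ and $k-d+2$ are non-positive, so their product is non-negative. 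This finishes the case and hence the lemma.

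No step is a genuine obstacle: the argument is a direct computation once one plugs in the definition of $m$ and the formula for $R_m$ in the linear range. The only thing to verify carefully is the sign of $(k-d+1)(k-d+2)$, which uses precisely the range $1\le k\le d-2$ given in the hypothesis (note that equality holds at the endpoint $k=d-2$, so the bound on $k$ cannot be relaxed).
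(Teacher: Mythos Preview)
Your proof is correct and follows essentially the same approach as the paper: both split into the two cases, dispose of $kd+1-\eta\le 0$ trivially, and in the other case substitute the value of $m$ to reduce the inequality to the nonnegativity of $(d-k-1)(d-k-2)=(k-d+1)(k-d+2)$ for $1\le k\le d-2$.
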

\begin{proof}
If $kd+1-\eta\le 0$, then $R_m(kd+1-\eta)=0$, so the claim is clearly true.

  If $kd+1-\mh\ge 2\md+1$, then by \eqref{eq:Rn_def}, $R_{\md}(kd+1-\mh)=kd+1-\mh-\md$. Using the value of $m$ in~\eqref{eq:md_def} and rearranging, we obtain
  \begin{equation*}
	\begin{split}    
    &\frac{(k+1)(k+2)}{2}+g-R_{\md}(kd+1-\mh)\\
    =&\frac{(k+1)(k+2)}{2}-kd-1+\frac{(d-1)(d-2)}{2}\\
    =&\frac{(d-k-1)(d-k-2)}{2}.
    \end{split}
  \end{equation*}
For $1\le k\le d-2$, the above expression is nonnegative, concluding the proof.
\end{proof}
\\

\begin{proof}[Proof of Proposition~\ref{thm:Rm-bound}: ]  Suppose first that $k$ is an integer which satisfies $0< kd+1-\eta<2m+1$. In this case, equation~\eqref{eq:Rmd} is equivalent to the inequality
\begin{equation}
\left\lfloor \frac{kd+2-\eta}{2} \right\rfloor\le \frac{(k+1)(k+2)}{2}+g.\label{eq:RmD-in-range}
\end{equation}
We note that, for any integers $x$ and $y$, the inequality $\lfloor x/2\rfloor \le  y$ is equivalent to the inequality $x\le 2y+1$. Hence,~\eqref{eq:RmD-in-range} is equivalent to the inequality
\[
kd+2-\eta\le (k+1)(k+2)+2g+1.
\]
Multiplying out and rearranging, this is equivalent to the expression
\begin{equation}
-k^2+(d-3)k-1\le 2g+\eta.
\label{eq:simplified-equation}
\end{equation}
For real $k$, the left-hand side is maximized at $(d-3)/2$. When $d$ is odd, $(d-3)/2$ is an integer and the maximum value of the left-hand side over integral $k$ is
\[
\frac{d^2-6d+5}{4},
\]
If $d$ is even, then the left-hand side of~\eqref{eq:simplified-equation} attains its maximum value over the integers at both $k=(d-2)/2$ and $k=(d-4)/2$. The value is
\[
\frac{d^2-6d+4}{4}.
\]
 Note that $(d-2)/2$ and $(d-4)/2$ are $\lceil (d-3)/2\rceil$ and $\lfloor (d-3)/2\rfloor$, respectively.

Note that if $kd+1-\eta<0$, or $kd+1-\eta>2m+1$, then~\eqref{eq:Rmd} automatically holds by Lemma~\ref{lem:min_val}. Since $R_m(s)=\lfloor (s+1)/2\rfloor$ for $s\in \{0,\dots, 2m+1\}$, we see that a sufficient condition for ~\eqref{eq:Rmd} is that
\begin{equation}
2g+\eta \ge \begin{cases} \frac{d^2-6d+5}{4}& \text{ if $d$ is odd}\\
\frac{d^2-6d+4}{4} & \text{ if $d$ is even}.
\end{cases}
\label{eq:necessary-sufficient}
\end{equation}
This is also clearly necessary if 
\[
0\le k d +1-\eta \le 2m+1,
\]
for either $k=\lfloor (d-3)/2\rfloor$ or $k=\lceil (d-3)/2 \rceil$ (note that both of these values of $k$ are in the range $1,\dots, d-2$, unless $d=4$, in which case just $\lceil (d-3)/2 \rceil$ is).

On the other hand, if 
\begin{equation}
\lceil(d-3)/2\rceil d +1-\eta \ge 2m+1,\label{eq:inequality-out-of-range}
\end{equation}
we claim that ~\eqref{eq:necessary-sufficient} holds automatically, and hence is vacuously a necessary condition for~\eqref{eq:Rmd}. To see this, write $\lceil(d-3)/2\rceil=(d-3+\delta)/2$ where $\delta$ is $0$ if $d$ is odd and $1$ if $d$ is even. Then~\eqref{eq:inequality-out-of-range} expands to the inequality
\begin{equation}
\left(\frac{d-3+\delta}{2}\right) d+1-\eta\ge (d-1)(d-2)-2\eta -2g +1.\label{eq:index-out-range}
\end{equation}
We rearrange equation~\eqref{eq:index-out-range} to see that it is equivalent to
\[
2g+\eta\ge \frac{d^2-(\delta+3) d +4}{2}.
\]
However, it is easy to check
\[
\frac{d^2-(\delta+3) d +4}{2}\ge \begin{cases} \frac{d^2-6d+5}{4}& \text{ if $d$ is odd}\\
\frac{d^2-6d+4}{4} & \text{ if $d$ is even}.
\end{cases}
\]

Additionally, we consider the case that 
\begin{equation}
\lfloor (d-3)/2 \rfloor d+1-\eta\le 0.
\label{eq:index-out-of-range-small}
\end{equation}
 Write  $\lfloor (d-3)/2 \rfloor=(d-3-\delta)/2$, where $\delta$ is either 0 or 1.
 We expand out equation~\eqref{eq:index-out-of-range-small} to see that it is equivalent to
 \[
\eta \ge \frac{d^2-(3+\delta) d+2}{2}. 
 \]
The left-hand side is less than or equal to $\eta+2g$. On the other hand, one easily computes
 \[
 \frac{d^2-(3+\delta) d+2}{2}\ge  \begin{cases} \frac{d^2-6d+5}{4}& \text{ if $d$ is odd}\\
\frac{d^2-6d+4}{4} & \text{ if $d$ is even},
\end{cases}
 \]
 when $d\ge 4$, completing the proof.
\end{proof}


\def\MR#1{}
\bibliographystyle{abbrv}
\bibliography{biblio}

\begin{thebibliography}{10}

\bibitem{BCG}
J.~Bodn\'{a}r, D.~Celoria, and M.~Golla.
\newblock Cuspidal curves and {H}eegaard {F}loer homology.
\newblock {\em Proc. Lond. Math. Soc. (3)}, 112(3):512--548, 2016.

\bibitem{BodnarNemethi}
J.~Bodn\'{a}r and A.~N\'{e}methi.
\newblock Lattice cohomology and rational cuspidal curves.
\newblock {\em Math. Res. Lett.}, 23(2):339--375, 2016.

\bibitem{Borodzik_Morse}
M.~Borodzik.
\newblock Morse theory for plane algebraic curves.
\newblock {\em J. Topol.}, 5(2):341--365, 2012.

\bibitem{BHL}
M.~Borodzik, M.~Hedden, and C.~Livingston.
\newblock Plane algebraic curves of arbitrary genus via {H}eegaard {F}loer
  homology.
\newblock {\em Comment. Math. Helv.}, 92(2):215--256, 2017.

\bibitem{BLmain}
M.~Borodzik and C.~Livingston.
\newblock Heegaard {F}loer homology and rational cuspidal curves.
\newblock {\em Forum Math. Sigma}, 2:e28, 23, 2014.

\bibitem{CPS}
A.~Calabri, D.~Paccagnan, and E.~Stagnaro.
\newblock Plane algebraic curves with many cusps, with an appendix by {E}ugenii
  {S}hustin.
\newblock {\em Ann. Mat. Pura Appl. (4)}, 193(3):909--921, 2014.

\bibitem{Cassou}
P.~Cassou-Nogu\`es and I.~Luengo.
\newblock On ${A}_{k}$ singularities on plane curves of fixed degree.
\newblock preprint, 2000.

\bibitem{Coolidge}
J.~L. Coolidge.
\newblock {\em A treatise on algebraic plane curves}.
\newblock Dover Publications, Inc., New York, 1959.

\bibitem{DaemiScadutoChern}
A.~Daemi and C.~Scaduto.
\newblock {C}hern-{S}imons functional, singular instantons, and the
  four-dimensional clasp number.
\newblock 2020.
\newblock arXiv:2007.13160.

\bibitem{EisenbudNeumann}
D.~Eisenbud and W.~Neumann.
\newblock {\em Three-dimensional link theory and invariants of plane curve
  singularities}, volume 110 of {\em Annals of Mathematics Studies}.
\newblock Princeton University Press, Princeton, NJ, 1985.

\bibitem{Feller_Krcatovich}
P.~Feller and D.~Krcatovich.
\newblock On cobordisms between knots, braid index, and the upsilon-invariant.
\newblock {\em Math. Ann.}, 369(1-2):301--329, 2017.

\bibitem{FellerPark}
P.~Feller and J.~Park.
\newblock A note on the four-dimensional clasp number of knots.
\newblock 2020.
\newblock arXiv:2009.01815.

\bibitem{FLMN}
J.~Fern\'{a}ndez~de Bobadilla, I.~Luengo-Velasco, A.~Melle-Hern\'{a}ndez, and
  A.~N\'{e}methi.
\newblock On rational cuspidal projective plane curves.
\newblock {\em Proc. London Math. Soc. (3)}, 92(1):99--138, 2006.

\bibitem{GollaStarkson}
M.~Golla and L.~Starkston.
\newblock The symplectic isotopy problem for rational cuspidal curves.
\newblock 2019.
\newblock preprint, arXiv:1907.06787.

\bibitem{GLS}
G.-M. Greuel, C.~Lossen, and E.~Shustin.
\newblock Plane curves of minimal degree with prescribed singularities.
\newblock {\em Invent. Math.}, 133(3):539--580, 1998.

\bibitem{GLS2}
G.-M. Greuel, C.~Lossen, and E.~Shustin.
\newblock The variety of plane curves with ordinary singularities is not
  irreducible.
\newblock {\em Internat. Math. Res. Notices}, (11):543--550, 2001.

\bibitem{Gusein}
S.~M. Gusein-Zade and N.~N. Nekhoroshev.
\newblock On singularities of type {$A_k$} on simple curves of fixed degree.
\newblock {\em Funktsional. Anal. i Prilozhen.}, 34(3):69--70, 2000.

\bibitem{Heddencabling}
M.~Hedden.
\newblock On knot {F}loer homology and cabling. {II}.
\newblock {\em Int. Math. Res. Not. IMRN}, (12):2248--2274, 2009.

\bibitem{HeddenKimLivingston}
M.~Hedden, S.~Kim, and C.~Livingston.
\newblock Topologically slice knots of smooth concordance order two.
\newblock {\em J. Differential Geom.}, 102(3):353--393, 2016.

\bibitem{HMZConnectedSum}
K.~Hendricks, C.~Manolescu, and I.~Zemke.
\newblock A connected sum formula for involutive {H}eegaard {F}loer homology.
\newblock {\em Selecta Math. (N.S.)}, 24(2):1183--1245, 2018.

\bibitem{HomSurvey}
J.~Hom.
\newblock A survey on {H}eegaard {F}loer homology and concordance.
\newblock {\em J. Knot Theory Ramifications}, 26(2):1740015, 24, 2017.

\bibitem{JCob}
A.~Juh\'asz.
\newblock Cobordisms of sutured manifolds and the functoriality of link {F}loer
  homology.
\newblock {\em Adv. Math.}, 299:940--1038, 2016.

\bibitem{JuhaszZemkeClasp}
A.~Juh\'asz and I.~Zemke.
\newblock New {H}eegaard {F}loer slice genus and clasp number bounds.
\newblock 2020.
\newblock arXiv:2007.07106.

\bibitem{KimParkInfiniteRank}
M.~H. Kim and K.~Park.
\newblock An infinite-rank summand of knots with trivial {A}lexander
  polynomial.
\newblock {\em J. Symplectic Geom.}, 16(6):1749--1771, 2018.

\bibitem{Kollar}
J.~Koll\'{a}r.
\newblock Is there a topological {B}ogomolov-{M}iyaoka-{Y}au inequality?
\newblock {\em Pure Appl. Math. Q.}, 4(2, Special Issue: In honor of Fedor
  Bogomolov. Part 1):203--236, 2008.

\bibitem{Palka_Cool2}
M.~Koras and K.~Palka.
\newblock The {C}oolidge-{N}agata conjecture.
\newblock {\em Duke Math. J.}, 166(16):3085--3145, 2017.

\bibitem{KH-Instantons-concordance}
P.~Kronheimer and T.~Mrowka.
\newblock Instantons and some concordance invariants of knots.
\newblock 2019.
\newblock e-print, \url{arXiv:1910.11129}.

\bibitem{Langer}
A.~Langer.
\newblock Logarithmic orbifold {E}uler numbers of surfaces with applications.
\newblock {\em Proc. London Math. Soc. (3)}, 86(2):358--396, 2003.

\bibitem{LevineRuberman}
A.~S. Levine and D.~Ruberman.
\newblock Generalized {H}eegaard {F}loer correction terms.
\newblock In {\em Proceedings of the {G}\"{o}kova {G}eometry-{T}opology
  {C}onference 2013}, pages 76--96. G\"{o}kova Geometry/Topology Conference
  (GGT), G\"{o}kova, 2014.

\bibitem{Libgober}
A.~Libgober.
\newblock Problems in topology of the complements to plane singular curves.
\newblock In {\em Singularities in geometry and topology}, pages 370--387.
  World Sci. Publ., Hackensack, NJ, 2007.

\bibitem{Lidman_flavor}
T.~Lidman.
\newblock On the infinity flavor of {H}eegaard {F}loer homology and the
  integral cohomology ring.
\newblock {\em Comment. Math. Helv.}, 88(4):875--898, 2013.

\bibitem{MOIntegerSurgery}
C.~Manolescu and P.~S. Ozsv{\'a}th.
\newblock Heegaard {F}loer homology and surgeries on links.
\newblock e-print, \url{arXiv:1011.1317}, 2010.

\bibitem{Nagata}
M.~Nagata.
\newblock On rational surfaces. {I}. {I}rreducible curves of arithmetic genus
  {$0$} or {$1$}.
\newblock {\em Mem. Coll. Sci. Univ. Kyoto Ser. A. Math.}, 32:351--370, 1960.

\bibitem{Ni-homological}
Y.~Ni.
\newblock Homological actions on sutured {F}loer homology.
\newblock {\em Math. Res. Lett.}, 21(5):1177--1197, 2014.

\bibitem{Orevkov}
S.~Y. Orevkov.
\newblock On rational cuspidal curves. {I}. {S}harp estimate for degree via
  multiplicities.
\newblock {\em Math. Ann.}, 324(4):657--673, 2002.

\bibitem{Orev_real}
S.~Y. Orevkov.
\newblock Some examples of real algebraic and real pseudoholomorphic curves.
\newblock In {\em Perspectives in analysis, geometry, and topology}, volume 296
  of {\em Progr. Math.}, pages 355--387. Birkh\"{a}user/Springer, New York,
  2012.

\bibitem{OwensStrle}
B.~Owens and S.~Strle.
\newblock Immersed disks, slicing numbers and concordance unknotting numbers.
\newblock {\em Comm. Anal. Geom.}, 24(5):1107--1138, 2016.

\bibitem{OSKnots}
P.~Ozsv\'ath and Z.~Szab\'o.
\newblock Holomorphic disks and knot invariants.
\newblock {\em Adv. Math.}, 186(1):58--116, 2004.

\bibitem{OSDisks}
P.~Ozsv\'ath and Z.~Szab\'o.
\newblock Holomorphic disks and topological invariants for closed
  three-manifolds.
\newblock {\em Ann. of Math. (2)}, 159(3):1027--1158, 2004.

\bibitem{OSlens}
P.~Ozsv\'{a}th and Z.~Szab\'{o}.
\newblock On knot {F}loer homology and lens space surgeries.
\newblock {\em Topology}, 44(6):1281--1300, 2005.

\bibitem{OSTriangles}
P.~Ozsv\'ath and Z.~Szab\'o.
\newblock Holomorphic triangles and invariants for smooth four-manifolds.
\newblock {\em Adv. Math.}, 202(2):326--400, 2006.

\bibitem{OSLinks}
P.~Ozsv\'ath and Z.~Szab\'o.
\newblock Holomorphic disks, link invariants and the multi-variable {A}lexander
  polynomial.
\newblock {\em Algebr. Geom. Topol.}, 8(2):615--692, 2008.

\bibitem{OSIntersectionForms}
P.~S. Ozsv{\'a}th and Z.~Szab{\'o}.
\newblock Absolutely graded {F}loer homologies and intersection forms for
  four-manifolds with boundary.
\newblock {\em Adv. Math.}, 173(2):179--261, 2003.

\bibitem{Palka_Cool1}
K.~Palka.
\newblock The {C}oolidge-{N}agata conjecture, part {I}.
\newblock {\em Adv. Math.}, 267:1--43, 2014.

\bibitem{Pal_Pel1}
K.~Palka and T.~Pe\l{}ka.
\newblock Classification of planar rational cuspidal curves {I}. {$\Bbb
  C^{**}$}-fibrations.
\newblock {\em Proc. Lond. Math. Soc. (3)}, 115(3):638--692, 2017.

\bibitem{Pal_Pel2}
K.~Palka and T.~Pe\l{}ka.
\newblock Classification of planar rational cuspidal curves {II}. {L}og del
  {P}ezzo models.
\newblock {\em Proc. Lond. Math. Soc. (3)}, 120(5):642--703, 2020.

\bibitem{RasmussenKnots}
J.~Rasmussen.
\newblock {\em {F}loer homology and knot complements}.
\newblock PhD thesis, Harvard University, 2003.
\newblock \url{arXiv:math/0306378}.

\bibitem{SarkarMaslov}
S.~Sarkar.
\newblock Maslov index formulas for {W}hitney {$n$}-gons.
\newblock {\em J. Symplectic Geom.}, 9(2):251--270, 2011.

\bibitem{wall-book}
C.~Wall.
\newblock {\em Singular points of plane curves}, volume~63 of {\em London
  Mathematical Society Student Texts}.
\newblock Cambridge University Press, Cambridge, 2004.

\bibitem{Weibel}
C.~Weibel.
\newblock {\em An introduction to homological algebra}, volume~38 of {\em
  Cambridge Studies in Advanced Mathematics}.
\newblock Cambridge University Press, Cambridge, 1994.

\bibitem{ZemGraphTQFT}
I.~Zemke.
\newblock Graph cobordisms and {H}eegaard {F}loer homology.
\newblock e-print, \url{arXiv:1512.01184}, 2015.

\bibitem{ZemQuasi}
I.~Zemke.
\newblock Quasistabilization and basepoint moving maps in link {F}loer
  homology.
\newblock {\em Algebr. Geom. Topol.}, 17(6):3461--3518, 2017.

\bibitem{ZemConnectedSums}
I.~Zemke.
\newblock Connected sums and involutive knot {F}loer homology.
\newblock {\em Proc. Lond. Math. Soc. (3)}, 119(1):214--265, 2019.

\bibitem{ZemAbsoluteGradings}
I.~Zemke.
\newblock Link cobordisms and absolute gradings on link {F}loer homology.
\newblock {\em Quantum Topol.}, 10(2):207--323, 2019.

\bibitem{ZemCFLTQFT}
I.~Zemke.
\newblock Link cobordisms and functoriality in link {F}loer homology.
\newblock {\em J. Topol.}, 12(1):94--220, 2019.

\end{thebibliography}

\end{document}